\title{Positive exponents for compositions of random maps}
\theoremstyle{theorem}
\newtheorem{thm}{Theorem}
\newtheorem{cor}[thm]{Corollary}
\newtheorem{lem}[thm]{Lemma}
\newtheorem{prop}[thm]{Proposition}
\theoremstyle{definition}
\newcommand{\N}{\mathbb{N}}
\renewcommand{\P}{\mathbb{P}}
\newcommand{\R}{\mathbb{R}}
\newcommand{\T}{\mathbb T}
\newcommand{\Len}{\operatorname{Len}}
\renewcommand{\a}{\alpha}
\renewcommand{\b}{\beta}
\newcommand{\e}{\epsilon}
\renewcommand{\tilde}{\widetilde}
\newcommand{\uo}{\underline{\omega}}
\newcommand{\Cc}{\mathcal C}
\newcommand{\Leb}{\operatorname{Leb}}
\title{Lyapunov exponents for random perturbations of\\
some area-preserving maps including the standard map}
\author{Alex Blumenthal\thanks{Courant Institute of Mathematical Sciences, New York University, New York, USA.  Email: alex@cims.nyu.edu.}
\and Jinxin Xue\thanks{Department of Mathematics, University of Chicago, Chicago, Illinois, USA, Email: jxue@math.uchicago.edu. This research was supported by NSF Grant DMS-1500897.} 
\and Lai-Sang Young\thanks{Courant Institute of Mathematical Sciences, New York University, New York, USA.  Email: lsy@cims.nyu.edu. This research was supported in part by NSF Grant DMS-1363161.}
}
\date{}
\begin{document}

\maketitle

\abstract{We consider a large class of 2D area-preserving diffeomorphisms that are
not uniformly hyperbolic but have strong hyperbolicity properties on large regions of
their phase spaces. A prime example is the {\it standard map}. Lower bounds for 
Lyapunov exponents of such systems are very hard to estimate, due to the potential
switching of ``stable" and ``unstable" directions. This paper shows 
that with the addition of (very) small random perturbations, one obtains with
relative ease Lyapunov exponents reflecting the geometry of the deterministic maps.}

\section{Introduction}

A signature of chaotic behavior in dynamical systems is sensitive dependence on
initial conditions. Mathematically, this is captured by the {\it positivity of Lyapunov
exponents}: a differentiable map $F$ of a Riemannian manifold $M$ is said to have 
a positive Lyapunov exponent (LE) at $x \in M$ if $\|dF^n_x\|$ grows exponentially fast 
with $n$. This paper is about volume-preserving diffeomorphisms, and 
we are interested in behaviors that occur on positive Lebesgue measure sets. 
Though the study of chaotic systems occupies a good part of smooth ergodic theory, the hypothesis of positive LE is extremely difficult to verify
when one is handed a concrete map defined by a specific equation -- except
where the map possesses a continuous family of invariant cones. 

An example that has come to symbolize the enormity of the challenge is the {\it standard map}, a mapping 
$\Phi=\Phi_L$ of the $2$-torus given by
$$
\Phi (I, \theta) = (I + L \sin \theta, \ \theta + I + L \sin \theta)
$$
where both coordinates $I, \theta$ are taken modulo $2\pi$ and 
$L \in \R$ is a parameter. For $L \gg 1$, the map $\Phi_L$ has strong expansion 
and contraction, their directions separated by clearly defined invariant cones 
on most of the phase space -- except on two narrow strips near $\theta = \pm \pi/2$
on which vectors are rotated violating cone preservation. 
As the areas of these ``critical regions" tend to zero as $L \to \infty$, one might
expect LE to be positive, but this problem has remained unresolved:
{\it no one has been able to prove, or disprove, the positivity of Lyapunov exponents 
for $\Phi_L$ for any one $L$, however large}, 
 in spite of considerable
effort by leading researchers. The best result known \cite{gorodetski2012stochastic} is that the LE of $\Phi_L$ is positive on sets of Hausdorff dimension 2 (which are very far from having positive Lebesgue measure). The presence of elliptic islands, which has been shown
for a residual set of parameters \cite{duarte1994plenty, duarte2008elliptic}, confirms that the obstructions to proving the positivity
of LE are real.

In this paper, we propose that this problem can be more tractable if one accepts
that dynamical systems are inherently noisy. We show,
for a class of 2D maps $F$ that includes the standard map, that by adding a 
very small, independent random perturbation at each step, the resulting 
maps have a positive LE that correctly reflects the rate 
of expansion of $F$ -- provided that $F$ has sufficiently large expansion
to begin with. More precisely, if $\|dF\| \sim L, \ L \gg 1$, on a large portion of the phase space, then random perturbations of size $O(e^{-L^{2-\varepsilon}})$ are sufficient for
guaranteeing a LE $\sim \log L$. 

Our proofs for these results, which are very short compared to previous works
on establishing nonuniform hyperbolicity for deterministic maps 
(e.g. \cite{jakobson1981absolutely, benedicks1985quadratic, benedicks1991Henon, wang2001strange, wang2006nonuniformly, wang2008toward})
are based on the following idea: We view the random process as a Markov chain 
on the projective bundle of the manifold on which the random maps act, and 
represent LE as an integral. Decomposing this integral into a ``good part" and
a ``bad part", we estimate the first leveraging the strong hyperbolicity of the 
unperturbed map, and obtain a lower bound for the second provided
the stationary measure is not overly concentrated in certain ``bad regions". 
We then use
a large enough random perturbation to make sure that the stationary measure
is sufficiently diffused.

We expect that with more work, this method can be extended both to higher
dimensions and to situations where conditions on the unperturbed map are 
relaxed.

\medskip \noindent
{\bf Relation to existing results.} Closest to the present work are the unpublished results of Carleson and Spencer \cite{spencer1985standard, spencer1985conjectures}, 
who showed for very carefully selected parameters $L \gg 1$ of the standard map that 
LE are positive when the map's derivatives are randomly perturbed. 
For comparison, our first result applies to all $L \gg 1$ with a slightly larger
perturbation than in \cite{spencer1985standard}, and our second result
assumes additionally a finite condition on a finite set; we avoid the rather delicate
parameter selection by perturbing the maps themselves, not just
their derivatives. 

Parameter selections similar to
those in \cite{spencer1985standard} were used -- without random perturbations -- 
to prove the positivity of LE 
for the H\'enon maps \cite{benedicks1991Henon}, quasi-periodic cocycles \cite{young1997lyapunov}, and rank-one attractors \cite{wang2008toward}, building on earlier 
techniques in 1D, see e.g. \cite{jakobson1981absolutely, rychlik1988another, benedicks1985quadratic, wang2006nonuniformly}. 
See also \cite{shamis2015bounds}, which estimates LE from below
for Schr\"odinger cocycles over the standard map. 
Relying on random perturbation alone -- without parameter deletion -- are \cite{lian2012positive}, which contains results analogous to ours in 1D, and
\cite{ledrappier2003random}, which applied random rotations to twist maps.
We mention also \cite{berger2012non}, which uses hyperbolic toral automorphisms
in lieu of random perturbations.

Farther from our setting, the literature on LE is vast. Instead of
endeavoring to give reasonable citation of individual papers, let us mention
several {\it categories} of results in the literature that have attracted much attention,
together with a small sample of results in each.  Furstenberg's work \cite{furstenberg1963noncommuting} in the early 60's initiated extensive research on criteria for the LE of random matrix products to be distinct (see e.g. \cite{gol1989lyapunov, guivarc1986products,virtser1980products}).
Similar ideas were exploited to study LE of cocycles over hyperbolic
and partially hyperbolic systems (see e.g. \cite{bonatti2004lyapunov, bonatti2003genericite}), with a generalization to deterministic maps \cite{avila2010extremal}. Unlike the results in the first two paragraphs, these results 
do not give quantitative estimates; they assert only that LE are simple, or nonzero. 

We mention as well the formula of Herman \cite{herman1983methode, knill1992positive} and the related work \cite{avila2002formula}, which use subharmonicity to estimate Lyapunov exponents, and the substantial body of work on 1D Schr\"odinger operators (e.g. { \cite{kotani1984, bourgain2013lyapounov, puig2004cantor, avila2009ten}}). {We also note the $C^1$ genericity of zero Lyapunov exponents of volume-preserving surface diffeomorphisms away from Anosov \cite{bochi2002genericity} and its higher-dimensional analogue \cite{ bochi2005lyapunov}.} Finally, we acknowledge results on the continuity or stability of LE, as in, e.g., \cite{ruelle1979analyticity, hennion1984loi, kifer1982perturbations, bocker2010continuity,ledrappier1991stability}.

\medskip
This paper is organized as follows: We first state and prove two results in a relatively
simple setting: Theorem \ref{thm:lyapEst}, which contains the core idea of this paper,  is proved in
Sections 3 and 4, while Theorem \ref{thm:lyapEst2}, which
shows how perturbation size can be decreased if some mild 
conditions are assumed, is proved in Section 5. 
We also describe a slightly more general setting
which includes the standard map, and observe in Section 6 that the proofs
given earlier in fact apply, exactly as written, to this broader setting.

\section{Results and remarks}

\subsection{Statement of results}

We let $\psi : \mathbb S^1 \to \R$ be a $C^3$ function for which the following hold:
\begin{itemize}
\item[(H1)] $C_{\psi}' = \{\hat x \in \mathbb S^1 : \psi'(\hat x) = 0\}$ and $C_{\psi}'' = \{\hat z \in \mathbb S^1 : \psi''(\hat z) = 0\}$ have finite cardinality.
\item[(H2)] $\min_{\hat x \in C_{\psi}'} |\psi''(\hat x)| > 0$ and $\min_{\hat z \in C_{\psi}''} |\psi'''(\hat z)| > 0$.
\end{itemize}
For $L > 1$ and $ a \in [0,1)$, we define 
$$f=f_{L,a}: \mathbb S^1 \to \R \qquad \mbox{ by } \qquad
f(x) = L \psi(x) + a\ .
$$
Let $\mathbb T^2 = \mathbb S^1 \times \mathbb S^1$ be the $2$-torus. 
The deterministic map to be perturbed is 
\begin{equation} \label{F}
F=F_{L,a}: \mathbb T^2 \to \mathbb T^2 \qquad \mbox{where} \qquad 
F(x,y) = \bigg( \begin{array}{c} f(x) - y \, \ (\text{mod }1) \\ x \end{array} \bigg) \ .
\end{equation}
We have abused notation slightly in Eq (\ref{F}): 
We have made sense of $f(x)-y$
by viewing $y \in \mathbb S^1$ as belonging in $[0,1)$, and have
written ``$z$ (mod $1$)" instead of
$\pi(z)$ where $\pi:\mathbb R \to \mathbb S^1 \cong \mathbb R/\mathbb Z$ is the usual
projection.
Observe that $F$ is an area-preserving diffeomorphism of $\mathbb T^2$.

We consider compositions of random maps
\[
F^n_{\uo} = F_{\omega_n} \circ \cdots \circ F_{\omega_1} \qquad \mbox{for} 
\quad n=1,2, \dots,
\]
where 
$$
F_\omega = F \circ S_\omega \ , \qquad 
S_\omega(x,y)  = ( x + \omega \, (\text{mod }1), y)\ ,
$$
and the sequence $\uo=(\omega_1, \omega_2, \dots)$ is chosen {\it i.i.d.} with respect 
to the uniform distribution $\nu^\e$ on $[- \e, \e]$ for some $\e > 0$. Thus our sample space can be 
written as $\Omega = [- \e, \e]^\N$,
equipped with the probability $\P =\big(\nu^\e\big)^{ \N}$.

Throughout, we let $\text{Leb}$ denote Lebesgue measure on $\T^2$. 

\smallskip
\begin{thm}\label{thm:lyapEst}
Assume $\psi$ obeys (H1),(H2), and fix $a \in [0,1)$.  Then 
\begin{itemize}
\item[(a)] for every $L>0$ and $\e>0$, 
\begin{equation}\label{eq:lyapDefn}
\lambda_1^\e = \lim_{n \to \infty} \frac 1 n \log \| (d F_{\uo}^n)_{(x,y)} \|
\end{equation}
exists and is independent of $(x, y, \uo)$ for every $(x,y) \in \mathbb T^2$ and $\P$-a.e. $\uo \in \Omega$;
\item[(b)] given $\a, \b \in (0,1)$, there is a constant $C = C_{\alpha, \beta} > 0$ such 
that for all $L, \e$ where $L$ is sufficiently large (depending on $\psi, \a, \b)$
and  $\e \geq L^{- CL^{1 - \beta}}$, we have
\[
\lambda_1^\e \geq \a \log L \, .
\]
\end{itemize}
\end{thm}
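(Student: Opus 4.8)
The plan is to realize $\lambda_1^\e$ as an integral over a stationary measure on the projective bundle $\T^2 \times \mathbb{P}^1$ and then split that integral into a ``good'' and a ``bad'' part. Concretely, write $\mathbb{P}^1$ in terms of slope $s$, and let $\widehat F_\omega : \T^2 \times \mathbb{P}^1 \to \T^2 \times \mathbb{P}^1$ be the induced projective cocycle. The i.i.d.\ system of $\widehat F_\omega$'s has at least one stationary measure $\widehat\mu$; by the Furstenberg--Khasminskii formula (for part (a) one argues existence of the limit and almost-sure constancy via ergodicity of the skew product, using that Lebesgue is $F_\omega$-stationary since each $F_\omega$ is area-preserving), one has
\[
\lambda_1^\e \;=\; \int_{\T^2 \times \mathbb{P}^1} \int_{[-\e,\e]} \log \frac{\|(d\widehat F_\omega)_{(z,s)} v\|}{\|v\|} \; d\nu^\e(\omega)\, d\widehat\mu(z,s).
\]
The integrand is $\log$ of the norm expansion of a single step. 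Because of the form of $F$, the differential $dF$ is (up to the shear $S_\omega$, which does not affect the $y$-derivative) the matrix $\left(\begin{smallmatrix} f'(x) & -1 \\ 1 & 0 \end{smallmatrix}\right)$, so for a line of slope $s$ the one-step expansion factor is controlled by $|f'(x) + \text{(slope term)}|$, i.e.\ essentially $|L\psi'(x)|$ away from the critical set $C_\psi'$ of $\psi'$.

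The main estimate is then to bound the integral from below by $\a \log L$. Partition $\T^2\times\mathbb{P}^1$ into a good region $G$, where $|f'(x)|$ is large (say $x$ is at distance $\gtrsim L^{-\beta}$ from $C_\psi'$, so that $|f'(x)| \gtrsim L^{1-\beta}$) and the slope $s$ is not in a small ``bad cone'' near the direction that $dF$ contracts, and its complement $G^c$. On $G$ one gets a one-step expansion of order $\log L$ (more precisely $\gtrsim (1-\beta)\log L$), using (H2) to convert ``distance to $C_\psi'$'' into ``size of $\psi'$'' since $\psi''$ is bounded away from $0$ on $C_\psi'$. On $G^c$ the integrand can be very negative, but only of size $O(\log L)$ (a crude bound: $\|dF_\omega\|^{\pm1} \le L^{O(1)}$ uniformly, so a single step can lose at most $O(\log L)$). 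Hence
\[
\lambda_1^\e \;\ge\; c_1 (1-\beta)(\log L)\,\widehat\mu(G) \;-\; c_2 (\log L)\,\widehat\mu(G^c),
\]
and the whole thing reduces to showing $\widehat\mu(G^c)$ is small — say $\le$ a small constant depending only on $\a,\b$ — once $L$ is large and $\e$ is as in the hypothesis.

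The crux — and the step I expect to be the main obstacle — is controlling $\widehat\mu(G^c)$, i.e.\ showing the stationary measure is not overly concentrated near the bad set. There are two pieces. First, the ``base'' part: $\widehat\mu$ projects to an $F_\omega$-stationary measure on $\T^2$, which by area-preservation is forced to be Lebesgue itself; since the $L^{-\beta}$-neighborhood of $C_\psi'$ (finite set, by (H1)) has Lebesgue measure $O(L^{-\beta})$, this piece is automatically tiny. Second, and harder, the ``fiber'' part: one must show that, conditioned on the base, $\widehat\mu$ does not put too much mass on slopes lying in the thin bad cone. This is where the random perturbation enters: each step applies $S_\omega$ with $\omega$ uniform on $[-\e,\e]$, which — because the dynamics is strongly hyperbolic in a backward sense on the bulk of phase space — spreads the conditional slope distribution, so that after pushing forward the mass on any small slope-interval is diffused over a set of size comparable to the expansion, preventing concentration below scale $\sim \e$ (hence the requirement $\e \ge L^{-CL^{1-\beta}}$: the expansion accumulated before one returns to the critical region is at most $L^{O(L^{1-\beta})}$, and $\e$ must beat its reciprocal). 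Making this quantitative — a distortion/absolute-continuity estimate for how $S_\omega$-averaging smooths the fiber marginal of $\widehat\mu$, combined with the observation that orbits spend a controlled amount of time before re-entering the critical strips — is the technical heart of the argument; once $\widehat\mu(G^c) \le \delta(\a,\b)$ is established, choosing constants so that $c_1(1-\beta)(1-\delta) - c_2\delta \ge \a$ finishes the proof.
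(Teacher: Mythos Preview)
Your high-level strategy is the paper's: pass to a stationary measure $\hat\mu$ on the projective bundle, use $\lambda_1^\e \ge \lambda(\hat\mu)$, split the integrand into a good part (large $|f'|$, slope in a horizontal cone) and a bad part, and note that the base component of the bad set has small Lebesgue (hence $\hat\mu$) measure. The gap is exactly where you say it is --- controlling the fiber part of $\hat\mu(G^c)$ --- but your proposed mechanism is not the paper's, and as written it does not close. In your one-step formulation the bad cone (say $\{|\tan\theta|>1\}$) has Lebesgue measure of order one in each fiber, so no Lebesgue argument can make $\hat\mu$ of it small; and ``$S_\omega$-averaging smooths the fiber marginal below scale $\e$'' is not enough either, since the bad cone is not thin.

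The paper supplies two concrete devices you are missing. First, by stationarity it rewrites $\lambda(\hat\mu)$ as $\int \log\|(dF_{\omega_{N+1}})_{(x_N,y_N)}u_{\theta_N}\|\,d\hat\mu(x_0,y_0,\theta_0)$ for a \emph{fixed} noise sequence and $N\asymp L^{1-\beta}$; on the set $G_N$ of points whose first $N$ iterates stay $\gtrsim L^{-1+\beta}$ from the critical strips, the cocycle is uniformly hyperbolic, so the set $\{\theta_0:|\tan\theta_N|>1\}$ lies in $[\pi/4,3\pi/4]$ and has Lebesgue measure $\lesssim L^{-\beta N}$ in each fiber. Second, to convert this Lebesgue bound to a $\hat\mu$-bound, the paper computes the \emph{three-step} transition density explicitly: the Jacobian of $(\omega_1,\omega_2,\omega_3)\mapsto(x_3,y_3,\theta_3)$ equals $\sin^2\theta_3\tan^2\theta_2\tan^2\theta_1\,f''(x_0+\omega_1)$, and after excising $O(L^{-1/2})$-neighborhoods of $C''_\psi$ and of the zeros of the resulting $\rho$, this yields the a~priori estimate $\hat\mu(A)\le \hat C L^{-1/4}\big(1+\e^{-3}L^{-2}\Leb(A)\big)$ for $A\subset\T^2\times[\pi/4,3\pi/4]$. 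Combining gives $\hat\mu(\hat G_N\cap\{|\tan\theta_N|>1\})\lesssim L^{-1/4}(1+\e^{-3}L^{-\beta N})$, which is small precisely when $\e\ge L^{-\beta N/3}\sim L^{-CL^{1-\beta}}$. This three-step Jacobian computation, and the $N$-step push-forward that makes the bad angle set Lebesgue-thin before invoking it, are the content of the ``crux'' you leave open. (A minor point: you assert Furstenberg--Khasminskii as an equality; the paper only proves and uses the inequality $\lambda_1^\e\ge\lambda(\hat\mu)$, which suffices.)
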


\smallskip

Theorem \ref{thm:lyapEst}
assumes no information whatsoever on dynamical properties of $F$ beyond its
definition in Eq (\ref{F}).
Our next result shows, under some minimal, easily checkable, condition on
the first iterates of $F$, that the bound above on $\lambda^\e_1$
continues to hold for a significantly smaller $\e$. Let $\mathcal N_c(C_\psi')$ denote the $c$-neighborhood of $C_\psi'$ in $\mathbb S^1$. We formulate the following condition
on $f=f_{L,a}$:

\bigskip \noindent
(H3)$(c)$ \ For any $\hat x, \hat x' \in C_\psi'$, we have that $f \hat x - \hat x' 
(\text{mod }1) \not \in \mathcal N_{c}(C_\psi')$\ .

\medskip 
Observe that for $L$ large, the set of $a$ for which (H3)$(c)$ is satisfied
tends to $1$ as $c \to 0$.

\smallskip
\begin{thm}\label{thm:lyapEst2} Let $\psi$ be as above, and fix an arbitrary 
$c_0>0$. Then given $\a, \b \in (0,1)$, there is a constant $C = C_{\alpha, \beta} > 0$ such that for all $L, a, \e$ where

\smallskip
-- $L$ is sufficiently large (depending on $\psi,c_0,\a, \b$), 

-- $a \in [0,1)$ is chosen so that $f = f_{L, a}$ satisfies {\rm (H3)}$(c_0)$, and

-- $\e \geq  L^{- CL^{2 - \beta}}$,

\smallskip \noindent
then we have
\[
\lambda_1^\e \geq \a \log L \, .
\]
\end{thm}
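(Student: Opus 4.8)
Since Theorem \ref{thm:lyapEst}(a) already gives the existence and $(x,y,\uo)$-independence of $\lambda_1^\e$ for all $L,\e>0$ --- in particular in the regime here --- the plan is only to prove the lower bound, by rerunning the proof of Theorem \ref{thm:lyapEst}(b) with one estimate sharpened. I recall that framework. As $S_\omega$ and $F$ are area-preserving, each $F_\omega$ preserves $\text{Leb}$; $\text{Leb}$ is the unique stationary measure on $\T^2$ (the two-step transition probabilities being absolutely continuous), so lifting to the projective bundle $\T^2\times\mathbb{P}^1$ there is an ergodic stationary measure $\hat\mu$ with base marginal $\text{Leb}$, and
\[
\lambda_1^\e=\int_{[-\e,\e]}\int_{\T^2\times\mathbb{P}^1}\log\frac{\|(dF_{\omega})_{(x,y)}v\|}{\|v\|}\,d\hat\mu(x,y,[v])\,d\nu^\e(\omega).
\]
Taking $v=(1,s)$ for the $\hat\mu$-a.e.\ finite unstable slope $s=s(x,y)$, and using $(dF_{\omega})_{(x,y)}(1,s)=(f'(x+\omega)-s,1)$, the integrand equals $\tfrac12\log\frac{(f'(x+\omega)-s)^2+1}{1+s^2}$. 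Fix, as in the proof of Theorem \ref{thm:lyapEst}(b), a good set $\mathcal G$ on which this is $\ge(1-o(1))\log L$ --- say $\{x+\omega\notin\mathcal N_\delta(C_\psi')\}\cap\{|s|\le M\}$, with $\delta$ and $M^{-1}$ suitable fixed powers of $L$ --- and on $\mathcal B:=\mathcal G^{c}$ use only the crude bound that, since $\det dF_\omega\equiv1$, the integrand is $\ge-\log\|dF_\omega\|\ge-\log L-C_\psi$. Then
\[
\lambda_1^\e\ \ge\ (1-o(1))\bigl(1-(\nu^\e\!\times\hat\mu)(\mathcal B)\bigr)\log L-(\log L+C_\psi)\,(\nu^\e\!\times\hat\mu)(\mathcal B),
\]
so everything reduces to the single estimate $(\nu^\e\times\hat\mu)(\mathcal B)\le\eta$ for a threshold $\eta=\eta(\a,\b)>0$.

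The part of $\mathcal B$ where $x+\omega\in\mathcal N_\delta(C_\psi')$ has $(\nu^\e\times\hat\mu)$-mass equal to its $\text{Leb}$-measure ($\text{Leb}$ being the base marginal and translation-invariant), hence $O(\delta)$ --- harmless. The difficulty is entirely the part where $|s(x,y)|>M$. Here one uses the backward description of the dynamics: the $F_\omega$-preimage of $(x,y)$ is $(y-\omega,\ f(y)-x)$, so along a stationary orbit $(x_n,y_n)$ the unstable slopes satisfy $s_n=\bigl(f'(x_{n-1}+\omega_n)-s_{n-1}\bigr)^{-1}$ (with $x_{n-1}+\omega_n=y_n$), and hence $|s_0|>M$ forces a near-resonance $|f'(x_{-1}+\omega_0)-s_{-1}|<1/M$ at the previous transition. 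Conditioning on the past up to time $-1$ (which fixes $x_{-1}$ and $s_{-1}$) and integrating out the fresh noise $\omega_0\sim\nu^\e$, the conditional probability of this is the $\nu^\e$-measure of a sublevel set of $\omega\mapsto|f'(x_{-1}+\omega)-s_{-1}|$, which by (H1), (H2) --- $\psi'$ having only nondegenerate critical points, on $C_\psi''$ --- is bounded by a $\psi$-dependent constant times $\e^{-1}(ML)^{-1/2}$.

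Promoting this one-step estimate to a bound on the stationary $\hat\mu(\{|s|>M\})$ requires iterating it backward, which amounts to tracking how often, and how deeply, the orbit of the critical lines $\{\hat x\}\times\mathbb S^1$, $\hat x\in C_\psi'$, folds back near the critical set before the noise redisperses the concentration this induces. This is exactly where (H3)$(c_0)$ is used, and why a much smaller $\e$ is admissible here than in Theorem \ref{thm:lyapEst}: $F$ sends $\{\hat x\}\times\mathbb S^1$ to the horizontal line $\mathbb S^1\times\{\hat x\}$ and then to the curve $u\mapsto(f(u)-\hat x,u)$, whose folds lie over $u=\hat x'\in C_\psi'$, at the points $(f\hat x'-\hat x,\hat x')$; (H3)$(c_0)$ says exactly that these fold points avoid $\mathcal N_{c_0}(C_\psi')$, i.e.\ the critical set does not fold back onto itself at second order. (In orbit terms: $x_n=(f(x_{n-1}+\omega_n)-x_{n-2})\ (\text{mod }1)$ is pushed away from $C_\psi'$ once $x_{n-1}$ is close enough to, and $x_{n-2}$ is $c_0$-close to, $C_\psi'$ --- so there are no three consecutive near-critical passages at the relevant scales.) Consequently the backward iteration closes after a bounded number of folding generations, rather than the a priori unbounded number that must be allowed without (H3); the resulting bound on $\hat\mu(\{|s|>M\})$ degrades only by a fixed power of $\e^{-1}(ML)^{-1/2}$, and tracking the constants with $\delta,M^{-1}$ fixed powers of $L$ turns the requirement $(\nu^\e\times\hat\mu)(\mathcal B)\le\eta$ into $\e\ge L^{-CL^{2-\beta}}$.

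The main obstacle is precisely this iterated fiber estimate: controlling the stationary conditional measures $\hat\mu_{(x,y)}$ on $\mathbb{P}^1$ uniformly over the base --- notably near the inflection set $C_\psi''$, where $f'=L\psi'$ is flat and the noise is least effective at breaking resonances --- and doing so while honestly accounting for the dependence between ``the past'' and the fresh noise so that the one-step bound may legitimately be composed. Everything else --- existence of $\hat\mu$, the integral formula, the crude per-step bound, and the area-preservation argument disposing of the near-critical part of $\mathcal B$ --- is inherited, essentially verbatim, from the proof of Theorem \ref{thm:lyapEst}.
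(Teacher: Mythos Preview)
Your framework and the geometric reading of (H3)$(c_0)$ are both right, and match the paper's: the integral formula, the good/bad decomposition, the crude lower bound $-\log L - O(1)$ on $\mathcal B$, and the observation that (H3) forbids three consecutive near-critical passages are all exactly as in the paper. The gap is the core quantitative step --- your control of $\hat\mu(\{|s|>M\})$ --- where the sketched mechanism is inconsistent with the bound you are aiming for.

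Specifically: you assert that under (H3) the backward slope recursion ``closes after a bounded number of folding generations'' and that the resulting bound ``degrades only by a fixed power of $\e^{-1}(ML)^{-1/2}$''. But a bound polynomial in $\e^{-1}$ cannot yield the conclusion for $\e$ as small as $L^{-CL^{2-\beta}}$; some quantity of order $L^{2-\beta}$ must enter the exponent, and nothing in your outline produces it. In the paper this exponent comes not from a bounded backward iteration but from a \emph{forward} $N$-step argument with $N\asymp L^{2-\beta}$: one evaluates the integrand at time $N$ (by stationarity), defines a new good set $G_N$ allowing approaches to $C_\psi'$ as close as $L^{-2+\beta}$ (so that $\mathrm{Leb}(G_N^c)\lesssim NL^{-2+\beta}$ permits this large $N$), and then proves that $(dF^N_{\uo})$ is \emph{composite} uniformly hyperbolic on $G_N$ with growth $\ge L^{c\beta N}$. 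That composite hyperbolicity is the real work and is where (H3) is used quantitatively: one encodes orbits by words in $\{G,I,B\}$ according to distance from $C_\psi'$, uses (H3) to restrict the admissible excursion words (no $BBB$, no $BIB$-type triples without an intervening $G$), and verifies cone preservation and growth case by case for each admissible excursion. This yields $\mathrm{Leb}\{|\tan\theta_N|>1\}\lesssim L^{-c\beta N}$ on $G_N$, which, fed into the a priori bound $\hat\mu(A)\lesssim L^{-1/4}(1+\e^{-3}L^{-2}\mathrm{Leb}(A))$ (the three-step density computation), gives the required smallness of the bad set precisely when $\e\ge L^{-c'\beta N}=L^{-CL^{2-\beta}}$. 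Your one-step conditional bound $\nu^\e\{|f'(x+\omega)-s|<1/M\}\lesssim\e^{-1}(ML)^{-1/2}$, even iterated a bounded number of times, does not substitute for this.
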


\medskip \noindent
{\bf \large A slight extension}

\medskip
Let $\psi : \mathbb S^1 \to \R$ be as above. For $L > 0$ and $a \in [0,1)$, 
we write $f_0 = f_{\psi, L, a} = L \psi + a$, and for $\varepsilon > 0$ define
$$
\mathcal U_{\varepsilon, L}(f_0) = \{ f : \mathbb S^1 \to \R \text{ such that }  \| f-f_0\|_{C^3} < L \varepsilon\}\ .
$$ 

We let $C'_f$ and $C''_f$ denote the zeros of $f'$ and $f''$. Below, (H3)$(c)$ is to be
read with $C'_f$ in the place of $C'_\psi$.
We write $F_f(x,y) = (f(x) -y \mbox{ (mod 1) }, x)$ for $f \in \mathcal U_{\varepsilon, L}(f_0)$.

\smallskip

\begin{thm}\label{thm:lyap3} Let $\psi : \mathbb S^1 \to \R$ satisfy (H1), (H2) as before. For $a \in [0,1)$ and $L >1$, let $f_0$ be as defined above. Then there exists $\varepsilon>0$ sufficiently
small so that 

(1) Theorems \ref{thm:lyapEst} and \ref{thm:lyapEst2} hold for $F = F_f$
for all $L > 0$ sufficiently large and $ f \in \mathcal U_{\varepsilon, L}(f_0)$; 

(2) $L$ depends only on $\psi$ as before but $a$ in Theorem \ref{thm:lyapEst2} 
depends on $f$.
\end{thm}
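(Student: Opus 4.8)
The plan is to observe that the proofs of Theorems \ref{thm:lyapEst} and \ref{thm:lyapEst2} never use the precise form $f_0 = L\psi + a$; they only use a short list of quantitative estimates on the one-variable map $f$ and its derivatives, all of which survive a $C^3$-perturbation of size $L\varepsilon$ once $\varepsilon$ is fixed small enough (depending only on $\psi$). So the argument is a matter of checking that each such quantitative input is stable, with constants degrading by at most a bounded factor. Concretely, first I would isolate from the earlier proofs the hypotheses actually invoked: (i) the ``critical set'' structure — that $C'_f$ and $C''_f$ are finite with the nondegeneracy bounds $\min_{C'_f}|f''|$ and $\min_{C''_f}|f'''|$ bounded below, and the complementary bound that away from a small neighborhood of $C'_f$ one has $|f'| \gtrsim L$ (this is where the expansion $\|dF\|\sim L$ and the invariant cone field come from); (ii) global bounds $\|f\|_{C^3} \lesssim L$ controlling distortion; and (iii) for Theorem \ref{thm:lyapEst2}, the combinatorial separation condition (H3)$(c)$ on the images of critical points.

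Second, I would verify the stability of (i) and (ii). Writing $f = f_0 + g$ with $\|g\|_{C^3} < L\varepsilon$: on the region $\{|\psi'| \ge \delta_0\}$ (for a small $\delta_0$ fixed by (H1),(H2)) we have $|f_0'| \ge L\delta_0$, hence $|f'| \ge L\delta_0 - L\varepsilon \ge \tfrac12 L\delta_0$ once $\varepsilon < \delta_0/2$, so expansion persists. Near each zero $\hat x$ of $\psi'$, (H2) gives $|\psi''| \ge 2\kappa$ on a fixed neighborhood, so $|f_0''| \ge 2\kappa L$ there and $|f''| \ge \kappa L$ once $\varepsilon < \kappa$; by the inverse/implicit function theorem $f'$ then has exactly one zero in that neighborhood, and it is nondegenerate with $|f''|$ at that zero bounded below by $\kappa L$. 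The same argument with (H2)'s bound on $\psi'''$ handles $C''_f$. Since $C'_\psi, C''_\psi$ are finite, finitely many such neighborhoods cover $\mathbb S^1$ together with the uniformly-expanding region, so $C'_f, C''_f$ are finite and the requisite uniform lower bounds hold with constants that are a fixed fraction of those for $f_0$. The $C^3$ bound $\|f\|_{C^3} \le \|f_0\|_{C^3} + L\varepsilon \le (C_\psi + 1)L$ is immediate. Thus every quantitative constant feeding the proofs of Theorems \ref{thm:lyapEst} and \ref{thm:lyapEst2} is replaced by a comparable one, so those proofs go through verbatim for $F_f$, yielding the exponent bound $\lambda_1^\e \ge \alpha\log L$ under the same (rescaled) hypotheses on $\varepsilon$; this proves part (1) for Theorem \ref{thm:lyapEst}, and part (2)'s claim that the threshold for $L$ still depends only on $\psi$.

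Third, for the Theorem \ref{thm:lyapEst2} half and assertion (2) regarding $a$: the condition (H3)$(c_0)$ is a condition on the finite set $f(C'_f) - C'_f \pmod 1$ staying outside $\mathcal N_{c_0}(C'_f)$. For fixed $f_0$ the set of $a$ with $f_0 = f_{\psi,L,a}$ satisfying (H3)$(c_0)$ has measure tending to $1$ as noted in the excerpt; but once we pass to a general $f \in \mathcal U_{\varepsilon,L}(f_0)$, the critical set $C'_f$ itself moves (by $O(\varepsilon)$, by the implicit-function estimate above) and the map $f$ is no longer a pure translate, so there is no free parameter $a$ inside $\mathcal U_{\varepsilon,L}(f_0)$ to adjust — hence the statement that ``$a$ depends on $f$.'' What I would actually prove is: given $f$, the associated $f_0 = f_{\psi,L,a(f)}$ is determined (it is the $L\psi+a$ nearest $f$, or simply the $f_0$ for which $f \in \mathcal U_{\varepsilon,L}(f_0)$), and one restricts to those $f$ for which the \emph{perturbed} critical data satisfies (H3)$(c_0)$; since $C'_f$ is within $O(\varepsilon)$ of $C'_{f_0}$ and $f(C'_f)$ within $O(L\varepsilon)\cdot O(1) = O(L\varepsilon)$ (mod $1$) of $f_0(C'_{f_0})$ — wait, more carefully, within $O(\varepsilon)$ after using $|f'|\lesssim L$ — the condition (H3)$(c_0)$ for $f$ follows from (H3)$(2c_0)$ for $f_0$ provided $\varepsilon$ is small relative to $c_0$. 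Then the earlier proof applies with $c_0$ in place of the nominal constant.

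The main obstacle I anticipate is purely bookkeeping rather than conceptual: tracking, through the (already terse) proofs of Theorems \ref{thm:lyapEst} and \ref{thm:lyapEst2}, exactly which constants are ``structural'' (allowed to depend on $\psi$ only) versus which must be uniform in $L$, and confirming that the $\varepsilon$-perturbation degrades only the former — in particular making sure no step secretly exploited the exact affine dependence on $a$ (e.g. in a change-of-variables or a Fubini argument over the parameter) in a way that a genuine $C^3$-ball perturbation would break. Once that audit is done, the three items above close the argument with no new estimates.
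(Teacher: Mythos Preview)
Your proposal is correct and follows essentially the same route as the paper: isolate the finite list of quantitative properties of $f$ actually used in Sections 3--5 (global $C^3$ bound $\lesssim L$, finiteness of $C'_f$ and $C''_f$ with cardinalities matching those of $\psi$, nondegeneracy lower bounds $\min_{C'_f}|f''|, \min_{C''_f}|f'''| \gtrsim L$, and separation of critical points), verify via the implicit function theorem that these survive a $C^3$-perturbation of size $L\varepsilon$ with constants depending only on $\psi$, and observe that the earlier proofs then run verbatim with $C'_f, C''_f$ in place of $C'_\psi, C''_\psi$. The paper's own proof is in fact terser than yours---it states the four properties as a lemma, leaves its proof to the reader, and then simply asserts that Sections 3--5 use only those properties---so your more explicit stability verification and your discussion of how (H3) transfers are welcome elaborations rather than departures.
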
 

\smallskip
The Chirikov standard map is defined as follows: a parameter $L > 0$ is fixed, and the map $(I, \theta) \mapsto (\bar I, \bar \theta)$, sending $[0,2 \pi)^2$ into itself, is defined by
\begin{align*}
\bar I &= I + 2 \pi L \sin \theta \, , \\
\bar \theta &= \theta + \bar I = \theta + I + 2 \pi L \sin \theta \, ,
\end{align*}
where both coordinates $I, \theta$ are taken modulo $2 \pi$. 

\begin{cor}[\bf The standard map] \label{cor:standard} Let $L$ be sufficiently large. Then:
\begin{itemize}
\item Theorem \ref{thm:lyapEst} holds for the standard map.
\item If additionally the map $f(x) = L \sin(2 \pi x) + 2 x$ satisfies (H3)(c) for some $c > 0$, then Theorem \ref{thm:lyapEst2} holds for the standard map for this value of $L$.
\end{itemize}
\end{cor}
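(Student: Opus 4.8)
The plan is to put the standard map into the form $F_f$ of the ``slight extension'' by passing to delay (companion) coordinates, to check that the resulting data meets the hypotheses of Theorem~\ref{thm:lyap3} (equivalently, of the Section~6 extension of Theorems~\ref{thm:lyapEst}--\ref{thm:lyapEst2}), and then to transport the estimates back. Eliminating $I$ from $\bar\theta = \theta + \bar I$, $\bar I = I + 2\pi L\sin\theta$ shows that the angular coordinate obeys the second-order recurrence $\theta_{n+1} - 2\theta_n + \theta_{n-1} = 2\pi L\sin\theta_n$; equivalently, the linear automorphism $A(I,\theta) = (\theta,\, \theta - I)$ of $\mathbb T^2$ --- a torus automorphism given by an integer matrix of determinant~$1$, hence area-preserving --- conjugates the standard map to $(u,v)\mapsto (2u + 2\pi L\sin u - v,\ u)$, which after the area-preserving rescaling $u = 2\pi x$, $v = 2\pi y$ becomes $F_f$ with $f(x) = L\sin(2\pi x) + 2x$. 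One then verifies that $\psi(x) = \sin(2\pi x)$ obeys (H1),(H2): here $C_\psi' = \{\tfrac{1}{4},\tfrac{3}{4}\}$ and $C_\psi'' = \{0,\tfrac{1}{2}\}$ are finite, $|\psi''| \equiv (2\pi)^2$ on $C_\psi'$, and $|\psi'''| \equiv (2\pi)^3$ on $C_\psi''$.

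The point that needs attention is that this $f$ is not literally of the form $L\psi + a$, nor an element of $\mathcal U_{\varepsilon,L}(f_0)\subset C^3(\mathbb S^1,\R)$: it carries a non-periodic drift $2x$, reflecting that $F_f$ is here isotopic to a hyperbolic toral automorphism rather than to an elliptic one. This is harmless, because $F_f$ depends on $f$ only through the well-defined circle map $x\mapsto f(x)\ (\text{mod }1)$ and through $f' = 2 + 2\pi L\cos(2\pi x)$, $f'' = L\psi''$, $f''' = L\psi'''$; thus $f'$ differs from $L\psi'$ by the constant $2 \le L\varepsilon$ (for $L$ large), while $f''$ and $f'''$ coincide with those of $f_0$ exactly. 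Since the proofs of Theorems~\ref{thm:lyapEst} and~\ref{thm:lyapEst2}, together with their Section~6 extension, invoke $f$ only through these data --- the geometry of the critical regions near the zeros of $f'$, which for $L$ large lie near $C_\psi'$, and the nondegeneracy supplied by (H2) --- they apply to this $F_f$ exactly as written. Hence Theorem~\ref{thm:lyapEst} holds for $F_f$ once $L$ is large; and when moreover $f(x) = L\sin(2\pi x) + 2x$ satisfies (H3)$(c)$ --- precisely the extra hypothesis of the second bullet, played in the role of (H3)$(c_0)$ --- Theorem~\ref{thm:lyapEst2} holds for $F_f$ at that value of $L$.

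It then remains to transfer the conclusions back along $A$. Conjugation by the fixed linear map $A$ and by the rescaling changes $\|dF^n_{\uo}\|$ only by a bounded multiplicative factor, which washes out after $\tfrac1n\log$, so $\lambda_1^\e$ is a conjugacy invariant here; and a random translation perturbation of the standard map in a fixed direction $v$ is carried by $A$ to $F_f$ pre-composed with translations in the fixed direction $Av$, which for the choice $v=(1,1)$ is exactly the perturbation $S_\omega$ (and in any case the proofs only use that the perturbation spreads the stationary measure along one fixed nondegenerate direction, so this choice is immaterial). Therefore the bounds $\lambda_1^\e \ge \a\log L$ obtained for $F_f$ descend to the randomly perturbed standard map, giving the Corollary. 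The only non-routine ingredient is the observation of the second paragraph, that the degree-$2$ map $F_f$ lies within the reach of the earlier proofs; the delay-coordinate conjugacy and the verification of (H1)--(H2) for $\psi=\sin(2\pi\cdot)$ are elementary.
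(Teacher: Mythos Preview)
Your argument is correct and follows the same route as the paper: conjugate the standard map via $x=\theta/2\pi$, $y=(\theta-I)/2\pi$ to the form $F_f$ with $f(x)=L\sin(2\pi x)+2x$ and $\psi=\sin(2\pi\cdot)$, then invoke Theorem~\ref{thm:lyap3}. The paper is more terse: it simply asserts $f\in\mathcal U_{\varepsilon,L}(f_0)$ for $L$ large (since $f-f_0=2x$ has $C^3$ norm $\le 2<L\varepsilon$ on a fundamental domain) and omits your transfer-back paragraph entirely, so your second and third paragraphs are extra care rather than a different method. One small slip in a side remark: the homology action of $F_f$ here is $\left(\begin{smallmatrix}2&-1\\1&0\end{smallmatrix}\right)$, which is parabolic, not hyperbolic---but this plays no role in the argument.
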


\smallskip
Theorem \ref{thm:lyap3} and Corollary \ref{cor:standard} are proved in Section 6. {\it All discussions prior to Section 6
pertain to the setting described at the beginning of this section.}


\subsection{Remarks}

\noindent {\bf Remark 1.}  {\it Uniform hyperbolicity on large but non-invariant regions 
of the phase space.} An important property of the deterministic map $F$ is that 
cone fields can be defined on all of $\mathbb T^2$ in such a way
that they are preserved by $dF_{(x,y)}$ for $(x,y)$ in a large but non-invariant 
region in $\T^2$. For example, let 
$C_{\frac15}=\{v=(v_x, v_y): |v_y/v_x| \le \frac15\}$.
Then for $(x,y) \not \in \{|f'| < 10\}$, which by (H1) and (H2) is comprised 
of a finite number of very narrow vertical strips in $\mathbb T^2$ for $L$ large, 
one checks easily that
$dF_{(x,y)}$ maps $C_{\frac15}$ into $C_{\frac15}$, and expands vectors in these cones
uniformly. It is just as easy to see that this cone invariance property 
cannot be extended across the strips in $\{|f'| < 10\}$, and that $F$ is not uniformly hyperbolic. 

These ``bad regions" where the invariant cone property fails
shrink in size as $L$ increases. More precisely, 
let $K_1 > 1$ be such that $|\psi'(x)| \geq K_1^{-1} d(x, C_\psi')$;
that such a $K_1$ exists follows from (H1), (H2) in Sect. 2.1. It is easy to check that
for any $\eta \in (0,1)$,
$$
d(x, C_{\psi}') \geq \frac{K_1}{ L^{1 - \eta}}\  \quad \implies \quad
|f'(x)| \geq L^\eta\ , 
$$ 
and this strong expansion in the $x$-direction 
is reflected in $dF_{(x,y)}$ for any $y$.

We must stress, however, that regardless of how small these ``bad regions" are, the positivity of Lyapunov exponents is not guaranteed for the deterministic map $F$
 -- except for the Lebesgue measure zero set of orbits that never venture into 
 these regions. In general, tangent vectors that have expanded in the good regions
can be rotated into contracting directions when the orbit visits a bad region. 
This is how elliptic islands are formed.

\bigskip \noindent
{\bf Remark 2.}  {\it Interpretation of condition (H3).} We have seen that visiting neighborhoods of 
$V_{\hat x} := \{x=\hat x\}$ for $\hat x \in C'_\psi$ can lead to
a loss in hyperbolicity, yet
at the same time it is unavoidable that the ``typical" orbit will visit these ``bad regions".
Intuitively, it is logical to expect the situation to improve if we do not permit orbits to visit these bad regions two iterates in a row -- except
that such a condition is impossible to arrange: since $F(V_{\hat x'})= \{y=\hat x'\}$, 
it follows that $F(V_{\hat x'})$ meets $V_{\hat x}$ for every $\hat x, \hat x' \in C'_\psi$. 
In Theorem \ref{thm:lyapEst2}, 
we assert that in the case of random maps, to reduce the size of $\e$ 
it suffices to impose the condition that no orbit can be in $C'_\psi \times \mathbb S^1$
for {\it three consecutive iterates}. That is to say, suppose
$F(x_i,y_i)=(x_{i+1},y_{i+1}), \ i=1,2,\dots$. If $x_i, x_{i+1} \in C'_\psi$, 
then $x_{i+2}$ must stay away from $C'_\psi$. This is a rephrasing of (H3). 
Such a condition is both realizable and checkable, as it involves
only a finite number of iterates for a finite set of points.

\bigskip \noindent
{\bf Remark 3.}  {\it Potential improvements.} 
Condition (H3) suggests that one may be able to shrink $\e$ further by imposing
similar conditions on one or two more iterates
of $F$. Such conditions will cause the combinatorics in Section 5 to be more involved, 
and since our $\e$, which is $\sim L^{-L^{2-\b}}$, is already extremely small for large $L$,
we will not pursue these possibilities here.

\section{Preliminaries}\label{Sb=1w}

The results of this section apply to all $L, \e >0$ unless otherwise stated.

\subsection{Relevant Markov chains}

Our random maps system $\{F^n_{\uo}\}_{n \geq 1}$ can be seen as a 
time-homogeneous Markov chain $\mathcal X := \{(x_n, y_n)\}$ given by 
\[
(x_n, y_n) = F^n_{\uo}(x_0, y_0) = F_{\omega_n}(x_{n - 1}, y_{n - 1}) \, .
\]
That is to say, for fixed $\e$, the transition probability starting from $(x,y) \in \mathbb T^2$ is
\[
P((x,y), A) = P^\e((x,y), A) = \nu^\e \{ \omega \in [- \e, \e] : F_\omega(x,y) \in A \} 
\]
for Borel $A \subset \mathbb T^2$. We write $P^{(k)}((x,y), \cdot)$ (or $P^{(k)}_{(x,y)}$) for the corresponding $k$-step transition probability. It is easy to see that for this chain, Lebesgue measure is \emph{stationary},
meaning for any Borel set $A \subset \mathbb T^2$,
\[
\mbox{Leb}(A) = \int P((x,y), A) \, d \mbox{Leb}(x,y) \, . 
\]

Ergodicity of this chain is easy and we dispose of it quickly.

\begin{lem}\label{lem:ergodicityCons}
Lebesgue measure is ergodic.
\end{lem}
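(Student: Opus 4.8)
The plan is to exploit a simple smoothing effect of the random perturbation: a single step $F_\omega$ keeps the law of the chain supported on a curve, but two steps spread it over a two‑dimensional region, so that the two‑step transition probabilities are absolutely continuous with respect to $\Leb$ with densities varying continuously with the base point. Given this, I would deduce ergodicity by a soft connectedness argument. Recall that ergodicity of the stationary measure $\Leb$ is equivalent to the statement that every Borel set $A\subseteq\T^2$ whose indicator is harmonic — i.e. $P\mathbf 1_A=\mathbf 1_A$ $\Leb$-a.e. — satisfies $\Leb(A)\in\{0,1\}$, and it is the latter that I would verify.

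The key step is to show that, for every $z=(x_0,y_0)\in\T^2$, $P^{(2)}(z,\cdot)$ is absolutely continuous with respect to $\Leb$, with density $p^{(2)}(z,\cdot)$, and that $z\mapsto p^{(2)}(z,\cdot)$ is continuous from $\T^2$ into $L^1(\T^2,\Leb)$. To see this, substitute $u=x_0+\omega_1$ and $v=f(u)-y_0+\omega_2$; a direct computation gives $F_{\omega_2}\circ F_{\omega_1}(z)=\Lambda(u,v)$, where $\Lambda(u,v)=(f(v)-u\ (\text{mod }1),\,v)$ is a fixed diffeomorphism of $\T^2$ with $|\det D\Lambda|\equiv 1$, while the change of variables $(\omega_1,\omega_2)\mapsto(u,v)$ itself has Jacobian determinant $\equiv 1$; hence $(\omega_1,\omega_2)\mapsto F_{\omega_2}\circ F_{\omega_1}(z)$ has unimodular Jacobian — this is the one place where area‑preservation of $F$ is used — so $P^{(2)}(z,\cdot)$, being the pushforward of the uniform law on $[-\e,\e]^2$, is absolutely continuous. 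For the $L^1$-continuity I would factor this map as $\Lambda\circ G_0\circ\tau_z$, where $\tau_z$ is a $z$-dependent translation of $\T^2$ and $G_0(a,b)=(a,f(a)+b)$ and $\Lambda$ are fixed volume‑preserving diffeomorphisms of $\T^2$; since $G_0$ and $\Lambda$ act as linear isometries on $L^1$-densities and a translation varies continuously in $L^1$, continuity of $z\mapsto p^{(2)}(z,\cdot)$ follows. (When $\e\ge\tfrac12$ one replaces box indicators by their $\Z^2$-periodizations, with no other change.)

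To conclude: given $A$ with $P\mathbf 1_A=\mathbf 1_A$ $\Leb$-a.e., iterating gives $P^{(2)}\mathbf 1_A=\mathbf 1_A$ $\Leb$-a.e. Set $g(z)=P^{(2)}(z,A)=\int_{\T^2}p^{(2)}(z,w)\,\mathbf 1_A(w)\,d\Leb(w)$. Then $g$ is continuous — it is the pairing of the continuously‑varying $L^1$ density $p^{(2)}(z,\cdot)$ with the fixed $L^\infty$ function $\mathbf 1_A$ — and $g=\mathbf 1_A$ $\Leb$-a.e., so $g$ is $\{0,1\}$-valued off a $\Leb$-null set. Hence $\{z:0<g(z)<1\}$ is open and $\Leb$-null, so empty; thus $g$ is a continuous $\{0,1\}$-valued function on the connected space $\T^2$, hence constant, and since $\mathbf 1_A=g$ $\Leb$-a.e. this forces $\Leb(A)\in\{0,1\}$, as desired.

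The main obstacle is the smoothing claim in the second paragraph. A single step genuinely does not regularize, so one must work with two steps and check that the two noise directions are transverse — equivalently, that the two‑step map has unimodular Jacobian, which boils down to area‑preservation of $F$. The $L^1$-continuity of the two‑step density, and its uniformity in $\e$, are then routine once the kernel is written as a translation composed with fixed volume‑preserving maps, and the concluding connectedness argument is entirely soft.
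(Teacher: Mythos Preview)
Your argument is correct and rests on the same two-step smoothing identity as the paper: your factorization $\Lambda\circ G_0\circ\tau_z$ is a rewriting of the paper's formula $F_{\omega_2}\circ F_{\omega_1}=F\circ F\circ S'_{\omega_1,-\omega_2}$, which says the two-step transition from $(x,y)$ is just $F^2$ pushed forward from a uniform box around $(x,y)$. The only difference is the concluding soft step: the paper reads off directly that $P^{(2)}_{(x,y)}$ is equivalent to Lebesgue on an open set containing $F^2(x,y)$ and uses support overlap to see all points lie in a single ergodic component, whereas you work a bit harder to get $L^1$-continuity of the kernel and then run the harmonic-set/connectedness argument; both are standard and equivalent here.
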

\begin{proof} For any $(x,y) \in \mathbb T^2$ and $\omega_1, \omega_2 \in [-\e,\e]$, 
\begin{equation}\label{eq:twoStepAC}
F_{\omega_2} \circ F_{\omega_1}(x,y) = F \circ F \circ S'_{\omega_1, - \omega_2}(x,y) 
\end{equation}
where $S'_{\omega, \omega'}(x,y) = \big( x + \omega \, (\text{mod } 1), y + \omega' (\text{mod } 1) \big)$. That is to say, $P^{(2)}_{(x,y)}$ is supported on
the set $F^2([x-\e, x+\e] \times [y-\e,y+\e])$, on which it is equivalent to
Lebesgue measure. From this one deduces immediately 
that (i) every ergodic stationary measure of $\mathcal X = \{(x_n, y_n)\}$ has a density, 
and (ii) all nearby points in $\mathbb T^2$
are in the same ergodic component. Thus there can be at most one ergodic 
component.
\end{proof}

Part (a) of Theorem \ref{thm:lyapEst} follows immediately from Lemma \ref{lem:ergodicityCons} together with the
Multiplicative Ergodic Theorem for random maps.

\medskip
Next we introduce a Markov chain $\hat {\mathcal X}$ on $\P \T^2$, the projective bundle over $\T^2$. Associating 
$\theta \in \P^1 \cong [0, \pi)$ with the unit vector $u_\theta = (\cos \theta, \sin \theta)$, $F_\omega$ induces a mapping
$\hat F_\omega : \P \T^2 \to \P \T^2$ defined by 
$$
\hat F_\omega (x,y,\theta) = (F_\omega(x,y), 
\theta') \qquad \mbox{where} \qquad u_{\theta'} = 
\pm \frac{(dF_\omega)_{(x,y)} u_\theta}{\|(dF_\omega)_{(x,y)} u_\theta \|}\ .
$$
Here $\pm$ is chosen to ensure that $\theta' \in [0,\pi)$. The Markov chain 
$\hat {\mathcal X} : = \{(x_n,y_n,\theta_n)\}$ is then defined by
$$
(x_n, y_n,\theta_n) = \hat F_{\omega_n}(x_{n-1}, y_{n-1}, \theta_{n-1})\ .
$$
We write $\hat P$ for its transition operator, $\hat P^{(n)}$ for the $n$-step transition transition operator, and use Leb to denote also Lebesgue measure on $\P \T^2$.

For any stationary probability measure $\hat \mu$ of the Markov chain 
$(x_n,y_n,\theta_n)$, define
$$\lambda(\hat \mu) = \int \log \|(dF_\omega)_{(x,y)} u_\theta \| \ d\hat \mu (x,y,\theta) 
\ d\nu^\e (\omega)\ .
$$

\begin{lem} \label{lambda_mu} For any stationary probability measure $\hat \mu$ of
the Markov chain $\hat {\mathcal X}$, we have
$$
\lambda^\e_1 \ge \lambda(\hat \mu)\ .
$$
\end{lem}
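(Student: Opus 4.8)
The plan is to produce, from a stationary measure $\hat\mu$ of $\hat{\mathcal X}$, an ergodic stationary measure of the same chain whose $\lambda(\cdot)$-value is at least $\lambda(\hat\mu)$, and then to identify that value with (a lower bound for) $\lambda_1^\e$ via the Multiplicative Ergodic Theorem. First I would recall that, by part (a) of Theorem~\ref{thm:lyapEst} (already established from Lemma~\ref{lem:ergodicityCons} and the MET for random maps), the limit $\lambda_1^\e$ in Eq~(\ref{eq:lyapDefn}) exists, is deterministic, and equals $\lim_n \tfrac1n\log\|(dF^n_{\uo})_{(x,y)}\|$ for Leb-a.e.\ $(x,y)$ and $\P$-a.e.\ $\uo$; in fact it is the top exponent and hence can be written as $\lambda_1^\e = \lim_n \tfrac1n \log \|(dF^n_{\uo})_{(x,y)} u\|$ for Lebesgue-a.e.\ $(x,y,u)$ and a.e.\ $\uo$ (the set of $u$ lying in the slow Oseledets subspace has zero measure in each fiber).

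Next I would set up the cocycle computation on $\P\T^2$. Write $\Phi(x,y,\theta,\omega) = \log\|(dF_\omega)_{(x,y)} u_\theta\|$, so that by the chain rule
\[
\log\|(dF^n_{\uo})_{(x,y)} u_\theta\| = \sum_{k=0}^{n-1} \Phi\big(\hat F^k_{\uo}(x,y,\theta),\, \omega_{k+1}\big),
\]
where $\hat F^k_{\uo}$ denotes the composition of the maps $\hat F_{\omega_1},\dots,\hat F_{\omega_k}$. This exhibits the quantity we want as a Birkhoff-type sum for the skew product driven by the Markov chain $\hat{\mathcal X}$ together with the i.i.d.\ driving sequence. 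Given the stationary measure $\hat\mu$, form the measure $\hat\mu \times \nu^\e$ (or, more precisely, the shift-invariant measure it induces on the space of trajectories of the skew product $(\hat{\mathcal X},\text{shift on }\uo)$), which is invariant for that skew product. By the ergodic decomposition, $\hat\mu\times\nu^\e$ is an average of ergodic invariant measures $\hat\mu_\iota\times\nu^\e$, and $\lambda(\hat\mu) = \int \lambda(\hat\mu_\iota)\, d\iota$; hence there exists an ergodic component with $\lambda(\hat\mu_\iota)\ge\lambda(\hat\mu)$. Applying Birkhoff's ergodic theorem along this ergodic component gives, for $\hat\mu_\iota\times\P$-a.e.\ $(x,y,\theta,\uo)$,
\[
\lim_{n\to\infty}\frac1n\log\|(dF^n_{\uo})_{(x,y)} u_\theta\| = \lambda(\hat\mu_\iota) \ge \lambda(\hat\mu).
\]

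Finally I would argue that this limit is bounded above by $\lambda_1^\e$. The cleanest route is to use subadditivity of $n\mapsto \log\|(dF^n_{\uo})_{(x,y)}\|$ together with Kingman's theorem: the limiting growth rate of $\|(dF^n_{\uo})_{(x,y)}\|$ is $\lambda_1^\e$, and since $\|(dF^n_{\uo})_{(x,y)} u_\theta\|\le \|(dF^n_{\uo})_{(x,y)}\|$ for every unit vector $u_\theta$, the per-vector growth rate can never exceed $\lambda_1^\e$. Thus $\lambda(\hat\mu)\le\lambda(\hat\mu_\iota)\le\lambda_1^\e$, which is the assertion. The one point requiring a little care---and the step I expect to be the main (minor) obstacle---is the measurability/integrability bookkeeping needed to invoke the ergodic decomposition and Birkhoff's theorem: one must confirm $\Phi\in L^1(\hat\mu\times\nu^\e)$, which follows from the uniform two-sided bound $|\log\|(dF_\omega)_{(x,y)} u_\theta\|| \le \log\|dF_\omega\|_{\sup} + \log\|(dF_\omega)^{-1}\|_{\sup}$ (finite since $F$ is a $C^1$ diffeomorphism of a compact manifold and $S_\omega$ is an isometry), and that the skew-product framework for Markov chains driven by i.i.d.\ noise indeed admits an ergodic decomposition compatible with the $\lambda$-functional. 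None of this is deep, but it is what makes the inequality rigorous rather than formal.
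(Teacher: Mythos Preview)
Your argument is correct, but the paper's route is considerably shorter and avoids the ergodic decomposition and pointwise ergodic theorems entirely. The paper exploits two facts you do not use: (i) stationarity of $\hat\mu$ together with the additivity of the cocycle gives, for \emph{every} $n$, the exact identity
\[
\lambda(\hat\mu)=\int \frac1n\log\|(dF^n_{\uo})_{(x,y)}u_\theta\|\,d\hat\mu(x,y,\theta)\,d(\nu^\e)^n(\uo),
\]
and (ii) since any stationary measure of $\hat{\mathcal X}$ projects to a stationary measure of $\mathcal X$ on $\T^2$, and the latter is uniquely Lebesgue (Lemma~\ref{lem:ergodicityCons}), the projection of $\hat\mu$ to $\T^2$ is Lebesgue. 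Bounding $\|Au_\theta\|\le\|A\|$ and integrating out $\theta$ then gives $\lambda(\hat\mu)\le\int\frac1n\log\|(dF^n_{\uo})_{(x,y)}\|\,d\mathrm{Leb}\,d(\nu^\e)^n$, and this integral converges to $\lambda_1^\e$ by the Multiplicative Ergodic Theorem. Three lines, no decomposition.

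Your approach, by contrast, passes through an ergodic component $\hat\mu_\iota$ and identifies $\lambda(\hat\mu_\iota)$ as a pointwise growth rate via Birkhoff. This works, but note that the step where you compare this pointwise growth rate to $\lambda_1^\e$ implicitly uses that $\hat\mu_\iota$ projects to Lebesgue on $\T^2$ (so that the ``for every $(x,y)$'' conclusion of Theorem~\ref{thm:lyapEst}(a), or at least its Lebesgue-a.e.\ version, applies on the support of $\hat\mu_\iota$); you should make that explicit. The upside of your route is that it makes clear why one can always replace $\hat\mu$ by an ergodic stationary measure without loss, which is conceptually useful; the downside is the extra machinery (ergodic decomposition of Markov-invariant measures, checking that ergodic components are again of product type $\hat\mu_\iota\times\P$) for an inequality that follows more cheaply from the integrated form.
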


\begin{proof} By the additivity of the 
cocycle $(x,y,\theta) \mapsto \log \|d(F_\omega)_{(x,y)} u_\theta\|$, we have, for any $n \in \N$, 
\begin{eqnarray*}
\lambda (\hat \mu) & = & \int \frac1n \log \|(dF_{\uo}^n)_{(x,y)} u_\theta \| \ d\hat \mu (x,y,\theta) \ d (\nu^\e)^n(\uo)\\
& \le & \int \frac1n \log \|(dF_{\uo}^n)_{(x,y)} \| \ d \mbox{Leb} (x,y) \ d (\nu^\e)^n(\uo)\ .
\end{eqnarray*}
That $\hat \mu$ projects to Lebesgue measure on $\T^2$ is used in passing from 
the first to the second line, and the latter converges to 
$\lambda^\e_1$ as $n \to \infty$ by the Multiplicative Ergodic Theorem.
\end{proof}

Thus to prove part (b) of Theorem \ref{thm:lyapEst}, it suffices to prove that 
$\lambda(\hat \mu) \ge \a \log L$ for some $\hat \mu$. Uniqueness of $\hat \mu$
is not required. On the other hand, once we have
shown that $\lambda^\e_1>0$, it will follow that there can be at most
one $\hat \mu$ with $\lambda(\hat \mu)>0$. 
Details are left to the reader.

We remark also that while Theorems \ref{thm:lyapEst}--\ref{thm:lyap3} hold for arbitrarily large values 
of $\e$, we will treat only the case
$\e \leq \frac12$, leaving the very minor
modifications needed for the $\e > \frac12$ case to the reader. 

Finally, we will omit from time to time the notation ``(mod 1)" when the meaning 
is obvious, e.g. instead of the technically correct
but cumbersome $f(x+\omega \mbox{ (mod 1)}) - y \mbox{ (mod 1)}$,
we will write $f(x+\omega) - y$.


\subsection{A 3-step transition}

In anticipation for later use, we compute here the transition probabilities 
$\hat P^{(3)}((x,y,\theta), \cdot)$, also denoted $\hat P^{(3)}_{(x,y,\theta)}$.
Let $(x_0, y_0, \theta_0) \in \P\T^2$ be fixed.
We define 
$$
H=H^{(3)}_{(x_0, y_0, \theta_0)} : [-\e,\e]^3 \to \P\T^2
$$
by
$$
H(\omega_1, \omega_2, \omega_3) = \hat F_{\omega_3} \circ \hat F_{\omega_2}
\circ \hat F_{\omega_1} (x_0, y_0, \theta_0)\ .
$$
Then $\hat P^{(3)}_{(x_0,y_0,\theta_0)} = H_*((\nu^\e)^3)$, the pushforward of
$(\nu^\e)^3$ on $[-\e,\e]^3$ by $H$.  Write $(x_i, y_i,\theta_i) = \hat F_{\omega_i} (x_{i-1}, y_{i-1},\theta_{i-1})$, $i = 1,2,3$. 

\smallskip
\begin{lem} \label{comp} Let $\e \in (0, \frac12]$. Let $(x_0, y_0, \theta_0) \in \P\T^2$ be fixed, and let
$H= H^{(3)}_{(x_0, y_0, \theta_0)}$ be as above. Then 
\begin{itemize}
\item[(i)] \begin{equation}\label{eq:detdG}
\det dH(\omega_1, \omega_2, \omega_3) =  \sin^2 (\theta_3) \tan^2(\theta_2) \tan^2(\theta_1) f''(x_0 + \omega_1)  \ ;
\end{equation}
\item[(ii)] assuming $\theta_0 \ne \pi/2$, we have that
$\det dH \ne 0$ on $V$ where $V \subset
[-\e,\e]^3$ is an open and dense set having full Lebesgue measure in $[-\e,\e]^3$;
\item[(iii)] $H$ is at most $\#(C_\psi'')$-to-one, i.e., no point in $\P \T^2$ has more than $\# C_\psi''$ preimages.
\end{itemize}
\end{lem}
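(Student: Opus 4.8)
The plan is to compute $H$ explicitly as a composition of three copies of $\hat F_\omega$ and then differentiate, exploiting the skew-product structure of $F$. First I would record the action of a single step $\hat F_\omega$ in suitable coordinates. Since $F_\omega(x,y) = (f(x+\omega)-y, x)$, the base map has derivative $(dF_\omega)_{(x,y)} = \bigl(\begin{smallmatrix} f'(x+\omega) & -1 \\ 1 & 0 \end{smallmatrix}\bigr)$; applied to $u_\theta = (\cos\theta,\sin\theta)$ this gives a vector whose new angle $\theta'$ satisfies $\cot\theta' = f'(x+\omega)-\cot\theta$ (treating the case $\sin\theta'=0$ separately), and whose norm is $\|(dF_\omega)_{(x,y)}u_\theta\| = |\sin\theta|^{-1}\cdot|\sin\theta'|^{-1}$ up to the usual normalization — more precisely the ratio of successive sines is controlled by this relation. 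The key identity to extract is that the $\theta$-component of $\hat F_\omega$ depends on $(x,y,\theta,\omega)$ only through $x+\omega$ and $\theta$, via $g(x+\omega,\theta)$ where $\cot g(s,\theta) = f'(s) - \cot\theta$, so that $\partial g/\partial s$ and $\partial g/\partial\theta$ are both easily computed: differentiating $\cot g = f'(s)-\cot\theta$ yields $-\csc^2 g \cdot \partial_s g = f''(s)$ and $-\csc^2 g\cdot\partial_\theta g = \csc^2\theta$, i.e. $\partial_s g = -\sin^2 g\, f''(s)$ and $\partial_\theta g = \sin^2 g/\sin^2\theta$.

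Next I would assemble $dH$. Track the iterates $(x_i,y_i,\theta_i)$: one has $x_i = f(x_{i-1}+\omega_i) - y_{i-1}$, $y_i = x_{i-1}$, and $\theta_i = g(x_{i-1}+\omega_i,\theta_{i-1})$. I want the Jacobian of $(\omega_1,\omega_2,\omega_3)\mapsto(x_3,y_3,\theta_3)$. The cleanest route is to choose an ordering of the output coordinates that makes the Jacobian matrix triangular. Note $y_3 = x_2 = f(x_1+\omega_2)-y_1 = f(x_1+\omega_2)-x_0$, and $x_1 = f(x_0+\omega_1)-y_0$ depends only on $\omega_1$; so $y_3$ depends only on $(\omega_1,\omega_2)$, with $\partial y_3/\partial\omega_2 = f'(x_1+\omega_2)$. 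Meanwhile $x_3 = f(x_2+\omega_3)-y_2 = f(x_2+\omega_3)-x_1$ depends on all three, with $\partial x_3/\partial\omega_3 = f'(x_2+\omega_3)$. And $\theta_3 = g(x_2+\omega_3,\theta_2)$, which also depends on all three. So ordering the outputs as $(y_3, \theta_3, x_3)$ against inputs $(\omega_2,\omega_1,\omega_3)$ — or a similarly chosen permutation — gives a block/triangular structure whose determinant is a product of three explicit partial derivatives. Chasing through: the $\omega_3$-derivative of $\theta_3$ is $\partial_s g(x_2+\omega_3,\theta_2) = -\sin^2\theta_3\, f''(x_2+\omega_3)$; the remaining two "diagonal" entries involve $\partial\theta_2/\partial\omega_2$ and $\partial\theta_1/\partial\omega_1$, which by the chain rule and the formula $\partial_s g = -\sin^2 g\, f''$ combined with how $f''$ telescopes — actually here is where the $\tan^2\theta_i$ factors arise, since the sensitivity of $\theta_i$ to a perturbation of $x$ two or three steps earlier is amplified by factors of $\sin^2\theta_j/\sin^2\theta_{j-1}$ coming from $\partial_\theta g$. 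Carefully bookkeeping these cancellations should collapse everything to $\sin^2\theta_3\tan^2\theta_2\tan^2\theta_1 \cdot f''(x_0+\omega_1)$; I expect a small miracle of cancellation here and would double-check signs and the isolated $f''(x_0+\omega_1)$ versus $f''$ at later points (the point is that $\omega_1$ enters only through $x_0+\omega_1$ and only at the very first step, which is why it is $x_0+\omega_1$ and not a later iterate).

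For part (ii): given $\theta_0\ne\pi/2$, I would argue that $\sin\theta_1,\sin\theta_2,\sin\theta_3$ and $\cos\theta_1,\cos\theta_2$ and $f''(x_0+\omega_1)$ each vanish only on a proper analytic (indeed, for the sine/cosine factors, lower-dimensional) subset of $[-\e,\e]^3$. Indeed $x_1 = f(x_0+\omega_1)-y_0$, so $\theta_1 = g(x_0+\omega_1,\theta_0)$, and $\theta_1\in\{0,\pi/2\}$ corresponds to finitely many values of $x_0+\omega_1$ (using that $g$ is real-analytic in $s$ wherever defined, and not identically constant since $\theta_0\ne\pi/2$ forces genuine dependence); similarly $\theta_2,\theta_3$ are real-analytic in $(\omega_1,\omega_2)$ and $(\omega_1,\omega_2,\omega_3)$ respectively and not identically in the bad set, and $f''(x_0+\omega_1) = 0$ only for $x_0+\omega_1$ in the finite set $C''_\psi$ shifted; the union of these finitely many analytic hypersurfaces is closed with empty interior and zero Lebesgue measure, so $V$ is its complement. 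The one point requiring care is verifying the "not identically" claims — i.e. that $g(\cdot,\theta_0)$ genuinely varies — which follows because $\partial_s g = -\sin^2 g\, f''(s)$ is not identically zero on any interval (as $f'' = L\psi''$ has only finitely many zeros by (H1)).

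Part (iii) is the main obstacle, and I would handle it by peeling off the last two steps. Given a target $(x_3,y_3,\theta_3)$, the preimages under $H$ correspond to choices $(\omega_1,\omega_2,\omega_3)$. Working backwards: $y_3$ and $\theta_3$ together with $x_3$ determine $(x_2,y_2,\theta_2)$? Not quite — the issue is that the map $\hat F_{\omega_3}$ on the fiber, $\theta_2\mapsto g(x_2+\omega_3,\theta_2)$, is a diffeomorphism of $\P^1$, so inverting it is fine; and $(x_3,y_3)=F_{\omega_3}(x_2,y_2)$ with $F$ a diffeomorphism means $(x_2,y_2)$ is determined by $(x_3,y_3,\omega_3)$, while $\omega_3$ is in turn pinned down because $x_2 = y_3$ is already known, giving $f(y_3+\omega_3) = x_3 + y_2 = x_3 + x_1$... hmm, $y_2 = x_1$ is not yet known. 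The honest structure: $y_3 = x_2$, $y_2 = x_1$, $y_1 = x_0$, and these three equations plus $x_i = f(x_{i-1}+\omega_i)-y_{i-1}$ show that once $x_0,y_0$ are fixed (they are, it's the basepoint) the orbit $(x_i,y_i)$ in the base is determined by $(\omega_1,\omega_2,\omega_3)$ with each $\omega_i$ recoverable from $x_{i-1},y_{i-1},x_i$ via $f(x_{i-1}+\omega_i) = x_i + y_{i-1}$ — and the map $\omega\mapsto f(x_{i-1}+\omega) = L\psi(x_{i-1}+\omega)+a$ on $[-\e,\e]$ is at most $\#C''_\psi$-to-one because $\psi'$ changes sign at most $\#C''_\psi = \#\{\psi''=0\}$ times, wait — $\psi$ is monotone between consecutive critical points of $\psi$, i.e. between consecutive zeros of $\psi'$, so it's $\#C'_\psi$-to-one. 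I suspect the intended count uses that the relevant inversion is of $x\mapsto f'(x)$ (arising in the fiber direction via $\cot\theta_1 - $ ...) rather than $f$ itself; in any case I would: (a) show that knowing the full image point $(x_3,y_3,\theta_3)$ plus the basepoint forces, step by step, finitely many choices at each stage, (b) identify precisely which of $f$, $f'$ is being inverted — I believe it is $f'$, because the $\theta$-information is what disambiguates, via $\cot\theta_1 = f'(x_0+\omega_1) - \cot\theta_0$, so $\omega_1$ is determined up to the multiplicity of $s\mapsto f'(s) = L\psi'(s)$, and $\psi'$ is monotone between consecutive zeros of $\psi''$, i.e. the fibers of $\psi'$ have size at most $\#C''_\psi$ — and then (c) check the later steps contribute no additional multiplicity because $\theta_2,\theta_3$ together with the known base orbit pin down $\omega_2,\omega_3$ uniquely. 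The delicate part is making sure the determination at steps 2 and 3 is genuinely unique (no extra branching), which should follow from the base map $F$ being a diffeomorphism so that $(x_{i-1},y_{i-1})$ and $(x_i,y_i)$ determine $\omega_i$ via a single equation whose left side $\omega\mapsto f(x_{i-1}+\omega)$, restricted by also requiring the correct $\theta_i$, becomes injective.
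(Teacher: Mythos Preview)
Your overall strategy matches the paper's, and you have correctly identified the key point of (iii): the only multivaluedness comes from inverting $f'$ at the first step, and the fibers of $\psi'$ have cardinality at most $\#C''_\psi$. But the execution has a recurring slip and, for (i), the proposed mechanism does not actually work as stated.

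First, the angle recursion: from $(dF_\omega)_{(x,y)}u_\theta = (f'(x+\omega)\cos\theta - \sin\theta,\ \cos\theta)$ one gets $\cot\theta' = f'(x+\omega) - \tan\theta$, not $-\cot\theta$. This propagates: with the correct formula one finds $\partial_\theta g = \sin^2 g/\cos^2\theta$, not $\sin^2 g/\sin^2\theta$, and it is this $1/\cos^2$ that eventually produces the $\tan^2\theta_i$ factors.

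For (i), the triangular-Jacobian idea does not go through: $y_3$ depends on $(\omega_1,\omega_2)$, but \emph{both} $x_3$ and $\theta_3$ depend on all three $\omega_i$, so no permutation of rows and columns makes $dH$ triangular, and one is left computing a genuine $2\times 2$ minor. The paper avoids this by working with wedge products and the intermediate variables $y_i = x_{i-1}+\omega_i$ and $k_i=\tan\theta_i$: substituting $x_{i+1}=f(y_{i+1})-y_i$ and $k_{i+1}=(f'(y_{i+1})-k_i)^{-1}$ into $dx_3\wedge dy_3\wedge d\theta_3$ and letting antisymmetry kill terms leaves exactly
\[
-\,d\omega_2\wedge d\omega_3\wedge\Big(\tfrac{\partial\theta_3}{\partial k_2}\tfrac{\partial k_2}{\partial k_1}\tfrac{\partial k_1}{\partial y_1}\Big)d\omega_1,
\]
and those three partials are $\sin^2\theta_3$, $\tan^2\theta_2$, and $-\tan^2\theta_1\,f''(x_0+\omega_1)$. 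No cancellation miracle is needed; the wedge does the bookkeeping.

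For (iii), your final plan is right but the back-solving order is tangled. The clean order is: from $(x_3,y_3,\theta_3)$ recover $y_2=f(y_3)-x_3$ and $\tan\theta_2=f'(y_3)-\cot\theta_3$ uniquely; then $\tan\theta_1=f'(y_2)-\cot\theta_2$ uniquely; then $f'(x_0+\omega_1)=\cot\theta_1+\tan\theta_0$, giving at most $\#C''_\psi$ choices of $\omega_1$. Each such $\omega_1$ then forces $y_1=x_0+\omega_1$, $x_1=f(y_1)-y_0$, and $\omega_2=y_2-x_1$, $x_2=f(y_2)-y_1$, $\omega_3=y_3-x_2$ \emph{linearly}; no further inversion of $f$ occurs, so there is no additional branching at steps 2 and 3. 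Your part (ii) is fine.
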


\begin{proof}[Proof of Lemma \ref{comp}.] The projectivized map $\hat F_\omega$ can be written as
\begin{equation} \label{Fhat}
\hat F_\omega(x,y,\theta)=\left(f(x+\omega)-y\ ,x+\omega,\ \arctan \frac{1}{f'(x+\omega)-\tan\theta} \right)\,  ,
\end{equation}
where $\arctan$ is chosen to take values in $[0,\pi)$.

\noindent
(i) It is convenient to write $k_i=\tan\theta_i$, so that $k_{i + 1} = (f'(y_{i + 1}) - k_i)^{-1}$. Note as well that $x_{i + 1} = f(y_{i + 1}) - y_i$. Then
\begin{equation}
\begin{aligned}
dx_3\wedge dy_3\wedge d\theta_3 
= & (f'(y_3)dy_3-dy_2)\wedge dy_3\wedge \left(\frac{\partial \theta_3}{\partial y_3}dy_3+\frac{\partial \theta_3}{\partial k_2}dk_2
\right)\\
=&-dy_2\wedge dy_3\wedge\left(\frac{\partial \theta_3}{\partial k_2}dk_2\right)\\
= & -dy_2\wedge(d\omega_3+f'(y_2)dy_2-dy_1)\wedge \left(\frac{\partial \theta_3}{\partial k_2}\right)\left(\frac{\partial k_2}{\partial y_2}dy_2+\frac{\partial k_2}{\partial k_1}dk_1\right)\\
=& -dy_2\wedge d(\omega_3-y_1)\wedge \left(\frac{\partial \theta_3}{\partial k_2}\frac{\partial k_2}{\partial k_1}dk_1\right)\\
=& -(d\omega_2+f'(y_1)dy_1)\wedge d(\omega_3-y_1)\wedge \left(\frac{\partial \theta_3}{\partial k_2}\frac{\partial k_2}{\partial k_1}\frac{\partial k_1}{\partial y_1} dy_1\right)\\
=&-d\omega_2\wedge d\omega_3\wedge \left(\frac{\partial \theta_3}{\partial k_2}\frac{\partial k_2}{\partial k_1}\frac{\partial k_1}{\partial y_1} d\omega_1\right).\\
\end{aligned}
\end{equation}
It remains to compute the parenthetical term. The second two partial derivatives are straightforward. The first partial derivative is computed by taking the partial derivative of the formula $\cot \theta_3 = f'(y_3) - k_2$ with respect to $k_2$ on both sides. 
We obtain as a result
\[
\frac{\partial \theta_3}{\partial k_2}\frac{\partial k_2}{\partial k_1}\frac{\partial k_1}{\partial y_1}= - \sin^2\theta_3\tan^2\theta_2 \tan^2\theta_1 f''(x_0+\omega_1) \, . 
\]

\noindent
(ii) For $x \in [0,1)$ and $\theta \in [0,\pi) \setminus \{\pi/2\}$, define $U(x, \theta) = \{\omega \in [- \e, \e] : f'(x + \omega) - \tan \theta \neq 0\}$. Note that $U(x, \theta)$ has full Lebesgue measure in $[- \e, \e]$ by (H1). We define
\begin{align*}
V = \{(\omega_1, \omega_2, \omega_3) \in [- \e, \e]^3 : \,  & \omega_1 \in U(x_0, \theta_0), \omega_2 \in U(x_1, \theta_1), \\ 
& \omega_3 \in U(x_2, \theta_2) , \text{ and } f''(x_0 + \omega_1) \neq 0\} \, .
\end{align*}
By (H1) and Fubini's Theorem, $V$ has full measure in $[- \e, \e]^3$, and it is clearly open 
and dense. To show $\det dH \ne 0$, 
we need $\theta_i \ne 0$ for $i=1,2,3$ on $V$. This  follows from the fact that for 
$\theta_{i-1} \neq \pi/2$, if $\omega_i \in U(x_{i-1}, \theta_{i-1})$ then $\theta_i \neq 0, \pi/2$.

\medskip \noindent
(iii) Given $(x_3, y_3, \theta_3)$, we solve for $(\omega_1, \omega_2, \omega_3)$
so that $H(\omega_1, \omega_2, \omega_3)=(x_3, y_3, \theta_3)$. Letting
$(x_i, y_i, \theta_i)$,  $i = 1,2$, be the intermediate images, we note that 
$y_2$ is uniquely determined by $x_3 = f(y_3) - y_2$, $\theta_2$ is determined
by $\cot \theta_3 =  f'(y_3) - \tan \theta_2$, as is $\theta_1$ once $\theta_2$ and $y_2$ are fixed. This in turn determines $f'(x_0 + \omega_1)$, but here uniqueness of
solutions breaks down.


Let $\omega_1^{(i)} \in [- \e, \e], i=1, \dots, n$, give the required value of $f'(x_0+\omega_1^{(i)})$. We observe that each $\omega_1^{(i)}$ determines uniquely  
$y_1^{(i)} = x_0 + \omega_1^{(i)}$, $x_1^{(i)}=f(y_1^{(i)})-y_0$,
$\omega_2^{(i)} = y_2-x_1^{(i)}$, $x_2^{(i)} = f(y_2) - y_1^{(i)}$, and
finally $\omega_3^{(i)} = y_3 - x_2^{(i)}$. 
Thus the number of $H$-preimages of any one point in $\P\T^2$ cannot exceed $n$.
Finally, we have $n \le 2$ for $\e$ small, and $n \le \# (C_\psi'')$ for $\e$ as large as $\frac12$.
\end{proof}

\begin{cor}\label{cor:densityForm} For any stationary probability $\hat \mu$ of $\hat{\mathcal X}$,
we have $\hat \mu(\T^2 \times \{\pi/2\}) =0$, and for any 
$(x_0,y_0,\theta_0)$ with $\theta_0 \ne \pi/2$ and any $(x_3, y_3,\theta_3) \in \P\T^2$, 
the density of $\hat P^{(3)}_{(x_0,y_0,\theta_0)}$ at $(x_3, y_3,\theta_3)$
is given by
\begin{equation} \label{density}
\frac{1}{(2\e)^3}  \left(\sum_{\omega_1 \in \mathcal E(x_3, y_3,\theta_3)}  
\frac{1}{|f''(x_0 + \omega_1)|} \right)   \frac{1}{\rho(x_3,y_3,\theta_3)}
\end{equation}
where
$$
\mathcal E(x_3, y_3,\theta_3) = \{\omega_1: \exists \omega_2, \omega_3
\mbox{ such that } H(\omega_1, \omega_2, \omega_3) = (x_3, y_3,\theta_3)\}
$$
and
$$
\rho(x,y,\theta) = \sin^2 (\theta) \left[f'(f(y) - x)  (f'(y) - \cot \theta) - 1 \right]^2\ .
$$
\end{cor}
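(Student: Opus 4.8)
The plan is to derive Corollary~\ref{cor:densityForm} directly from Lemma~\ref{comp} by a change-of-variables (pushforward-density) computation, treating each of the three assertions in turn.

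\textbf{Step 1: the atom-free claim.} To see $\hat\mu(\T^2\times\{\pi/2\})=0$, I would first observe that $\hat F_\omega$ sends $\{\theta=\pi/2\}$ off of $\{\theta=\pi/2\}$: from the explicit formula (\ref{Fhat}), if $\theta=\pi/2$ then $\tan\theta=\infty$ and $\theta'=\arctan(1/(f'(x+\omega)-\infty))=\arctan 0=0\ne\pi/2$. More to the point, I want to show that from \emph{any} starting point, the $3$-step transition probability gives zero mass to $\{\theta_3=\pi/2\}$; this is immediate because on the full-measure set $V$ of Lemma~\ref{comp}(ii) we have $\theta_3\ne\pi/2$ (indeed $\theta_3\neq 0,\pi/2$, as noted in the proof of (ii)). Hence $\hat P^{(3)}_{(x_0,y_0,\theta_0)}(\T^2\times\{\pi/2\})=0$ for every $(x_0,y_0,\theta_0)$ with $\theta_0\ne\pi/2$. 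Now suppose $\hat\mu$ is stationary; applying stationarity three times, $\hat\mu(\T^2\times\{\pi/2\})=\int \hat P^{(3)}_{(x_0,y_0,\theta_0)}(\T^2\times\{\pi/2\})\,d\hat\mu$. Splitting the integral over $\{\theta_0=\pi/2\}$ and its complement, the complement contributes $0$ by the above, so $\hat\mu(\T^2\times\{\pi/2\})\le \hat\mu(\T^2\times\{\pi/2\})\cdot\sup(\cdots)$; a cleaner route is to note $\hat P^{(3)}_{(x_0,y_0,\theta_0)}$ for $\theta_0=\pi/2$ also avoids $\{\pi/2\}$ (since $\theta_1\ne\pi/2$ there and then we are in the generic case for the remaining two steps), giving $\hat P^{(3)}_{(x_0,y_0,\theta_0)}(\T^2\times\{\pi/2\})=0$ for \emph{all} $(x_0,y_0,\theta_0)$, whence $\hat\mu(\T^2\times\{\pi/2\})=0$ outright.

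\textbf{Step 2: the density formula.} With $\hat\mu$-a.e.\ (indeed every $\theta_0\ne\pi/2$) base point $(x_0,y_0,\theta_0)$ in hand, I write $\hat P^{(3)}_{(x_0,y_0,\theta_0)}=H_*((\nu^\e)^3)$ as recorded just before Lemma~\ref{comp}, where $(\nu^\e)^3$ has constant density $(2\e)^{-3}$ on $[-\e,\e]^3$. By Lemma~\ref{comp}(ii), $H$ is a local diffeomorphism on the full-measure open set $V$, and by (iii) it is finite-to-one; so the standard pushforward-density (coarea/change-of-variables) formula gives, at a point $p=(x_3,y_3,\theta_3)$ in the image,
\[
\text{density of }\hat P^{(3)}_{(x_0,y_0,\theta_0)}\text{ at }p
=\frac{1}{(2\e)^3}\sum_{(\omega_1,\omega_2,\omega_3)\in H^{-1}(p)}\frac{1}{|\det dH(\omega_1,\omega_2,\omega_3)|}\,.
\]
Now I substitute the Jacobian from Lemma~\ref{comp}(i), $|\det dH|=\sin^2\theta_3\tan^2\theta_2\tan^2\theta_1\,|f''(x_0+\omega_1)|$. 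The key combinatorial point, already extracted in the proof of Lemma~\ref{comp}(iii), is that once $p=(x_3,y_3,\theta_3)$ and the base point are fixed, the intermediate data $y_2,\theta_2,\theta_1$ are \emph{uniquely} determined (independently of which preimage branch $\omega_1$ we are on): $y_2$ from $x_3=f(y_3)-y_2$, then $\theta_2$ from $\cot\theta_3=f'(y_3)-\tan\theta_2$, then $\theta_1$ from $k_1=(f'(y_2)-k_2)^{-1}$ since $y_2,k_2$ are fixed; and the only freedom is in $\omega_1$ via the value of $f'(x_0+\omega_1)$. Therefore the factor $\sin^2\theta_3\tan^2\theta_2\tan^2\theta_1$ is the same for every term in the sum and pulls out. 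What remains inside the sum is $\sum_{\omega_1\in\mathcal E(x_3,y_3,\theta_3)}|f''(x_0+\omega_1)|^{-1}$, matching the statement, and the pulled-out factor must be rewritten as $\rho(x_3,y_3,\theta_3)^{-1}$: this is a direct algebraic identity, $\sin^2\theta_3\,\tan^2\theta_2\,\tan^2\theta_1=\rho(x_3,y_3,\theta_3)$, where I express $\tan\theta_1$ via $k_1=1/(f'(y_2)-\cot\theta_3)$ (using $\tan\theta_2=1/(f'(y_3)-\cot\theta_3)$ reorganized), with $y_2=f(y_3)-x_3$; unwinding gives exactly $\rho(x,y,\theta)=\sin^2\theta[f'(f(y)-x)(f'(y)-\cot\theta)-1]^2$.

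\textbf{Main obstacle and bookkeeping.} The computation is routine in structure; the one place requiring genuine care is matching the explicit expression for $\rho$. Concretely, I must track the $\tan^2\theta_1$ factor through the coordinate changes: from $dx_3\wedge dy_3\wedge d\theta_3$ the proof of Lemma~\ref{comp}(i) already writes the Jacobian in terms of $\theta_1,\theta_2,\theta_3$ and $x_0$; but in Corollary~\ref{cor:densityForm} I need it re-expressed purely in terms of the \emph{output} coordinates $(x_3,y_3,\theta_3)$, which forces me to invert the relations $x_3=f(y_3)-y_2$, $\cot\theta_3=f'(y_3)-\tan\theta_2$, $\tan\theta_2=1/(f'(y_2)-\tan\theta_1)$ to solve for $y_2$ and $\tan\theta_1$. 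A possible subtlety is sign: $\tan^2\theta_1$ is manifestly nonnegative, and $[f'(f(y)-x)(f'(y)-\cot\theta)-1]^2$ is a square, so no sign ambiguity survives, but I should double-check that $\tan\theta_1=1/(f'(y_2)-\cot\theta_3)$ rather than its reciprocal, i.e.\ that I have not transposed a step in the chain $k_{i+1}=(f'(y_{i+1})-k_i)^{-1}$. A second bit of bookkeeping: I should note that the formula is asserted for \emph{every} target $(x_3,y_3,\theta_3)\in\P\T^2$, with the convention that $\mathcal E$ is empty (and the density $0$) off the image of $H$; and that $\theta_i\ne 0,\pi/2$ along generic orbits (from the proof of (ii)) guarantees $\tan^2\theta_1,\tan^2\theta_2$ are finite and nonzero, so the displayed density is well-defined and positive on the image. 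With those checks done, assembling (\ref{density}) is immediate.
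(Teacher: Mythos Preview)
Your approach is essentially the paper's: the density formula comes from Lemma~\ref{comp} via change of variables, with the factor $\sin^2\theta_3\tan^2\theta_2\tan^2\theta_1$ re-expressed in the output coordinates $(x_3,y_3,\theta_3)$ using the inversion in the proof of Lemma~\ref{comp}(iii).

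Two minor remarks. First, the paper's argument for $\hat\mu(\T^2\times\{\pi/2\})=0$ is simpler than your three-step one: a single step suffices, since $\theta'=\pi/2$ forces $f'(x+\omega)=\tan\theta$, and for fixed $(x,\theta)$ this has at most finitely many solutions $\omega$ (because $C''_\psi$ is finite by (H1)), hence $\nu^\e$-measure zero. Stationarity then gives the claim immediately. Second, the recursion you wrote for $k_1$ is inverted (your own caveat was on point): from $k_2=(f'(y_2)-k_1)^{-1}$ one solves $k_1=f'(y_2)-1/k_2$, not $(f'(y_2)-k_2)^{-1}$. With the correct relation,
\[
\tan^2\theta_1\,\tan^2\theta_2=\bigl(f'(y_2)\tan\theta_2-1\bigr)^2=\bigl[f'(f(y_3)-x_3)\,(f'(y_3)-\cot\theta_3)-1\bigr]^2,
\]
which is exactly the bracketed term in $\rho$. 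Once this is fixed the rest of your bookkeeping goes through unchanged.
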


\begin{proof} To show $\hat \mu(\T \times \{\pi/2\}) = 0$, it suffices to show that 
given any $x \in [0,1)$ and any $ \theta \in [0,\pi)$, 
$\nu^\e \{\omega \in [- \e, \e] : f'(x + \omega) = \tan \theta\} = 0$, and 
that is true because $C''_\psi$ is finite by (H1). The formula in (\ref{density}) 
follows immediately
from the proof of Lemma \ref{comp}, upon expressing $\tan^2(\theta_2) \tan^2(\theta_1)$
in terms of $(x_3,y_3,\theta_3)$ as was done in the proof of Lemma \ref{comp}(iii).
\end{proof}

\section{Proof of Theorem \ref{thm:lyapEst}}

The idea of our proof is as follows: Let $\hat \mu$ be any stationary probability
of the Markov chain $\hat{\mathcal X}$.
To estimate the integral in $\lambda(\hat\mu)$, we need 
to know the distribution of $\hat \mu$ in the $\theta$-direction. 
Given that the maps $F_\omega$
are strongly uniformly hyperbolic on a large part of the phase space 
with expanding directions well aligned with the $x$-axis  (see Remark 1),
one can expect that under $dF^N_{\uo}$ for large $N$,
$\hat \mu$ will be pushed toward a neighborhood of $\{\theta=0\}$
on much of $\T^2$,
and that is consistent with $\lambda^\e_1 \approx \log L$. This reasoning, however, 
is predicated on $\hat \mu$ not being concentrated, or stuck, on very small sets far away 
from $\{\theta \approx 0\}$, a scenario not immediately ruled out
 as the densities of transition probabilities are not bounded.

We address this issue directly by proving
 in Lemma \ref{lem:quantComp} an {\it a priori} bound 
on the extent to which $\hat \mu$-measure can be concentrated on (arbitrary) small sets. 
This bound is used in Lemma \ref{lem:angleMassEst} to estimate the $\hat \mu$-measure of the
set in $\P\T^2$ not yet attracted to $\{\theta=0\}$ in $N$ steps. The rest of the proof
consists of checking that these bounds are adequate for our purposes.


\medskip
In the rest of the proof, let $\hat \mu$ be an arbitrary invariant 
probability measure of $\hat{\mathcal X}$.

\smallskip
\begin{lem}\label{lem:quantComp} Let $A \subset \{\theta \in [\pi/4, 3 \pi/4]\}$ be a Borel subset of $\P\T^2$. Then for $L$ large enough,
\begin{align}\label{eq:hatMuBound}
\hat \mu(A) \leq \frac{\hat C}{L^{\frac14}} \bigg(1 + \frac{1}{\e^3 L^2} \Leb(A) \bigg) \, ,
\end{align}
for all $\e \in (0, \frac12],$ where $\hat C > 0$ is a constant independent of $L, \e$ or $A$.
\end{lem}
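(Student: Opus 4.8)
The plan is to estimate $\hat\mu(A)$ via the stationarity identity $\hat\mu(A) = \int \hat P^{(3)}_{(x_0,y_0,\theta_0)}(A)\,d\hat\mu(x_0,y_0,\theta_0)$, using the explicit density formula from Corollary \ref{cor:densityForm}. The integrand $\hat P^{(3)}_{(x_0,y_0,\theta_0)}(A)$ is obtained by integrating the density \eqref{density} over $A$; since $A$ lives in the region $\theta\in[\pi/4,3\pi/4]$, the factor $\sin^2\theta_3$ in $\rho$ is bounded below by a constant, so $1/\rho(x_3,y_3,\theta_3)$ is controlled by $[f'(f(y_3)-x_3)(f'(y_3)-\cot\theta_3)-1]^{-2}$. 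The key structural point is that the density splits into a part depending on the target $(x_3,y_3,\theta_3)$ (namely $1/\rho$, together with the range restriction $\omega_1\in\mathcal E(x_3,y_3,\theta_3)$) and a part depending on the source ($\sum_{\omega_1}1/|f''(x_0+\omega_1)|$). First I would bound the source-dependent sum: since $H$ is at most $\#(C''_\psi)$-to-one (Lemma \ref{comp}(iii)), the sum has boundedly many terms, and each term $1/|f''(x_0+\omega_1)|$ is integrable in $\omega_1$ because $f'' = L\psi''$ vanishes only on the finite set $C''_\psi$ where, by (H2), $\psi'''\neq 0$, so $\psi''$ has a simple zero and $1/|\psi''|$ is locally integrable; this integrability, after pulling the $d\hat\mu(x_0,y_0,\theta_0)$ integral back to an integral in $\omega_1$ via the change of variables, contributes a factor $\sim 1/L$ (one power of $L$ from $f''=L\psi''$) times $1/(2\e)^3 \cdot (2\e)^{\text{something}}$ — the $\e$-bookkeeping needs care here.

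Next I would handle the target-dependent factor. Split $A = A_1 \cup A_2$ where on $A_1$ the quantity $|f'(f(y_3)-x_3)(f'(y_3)-\cot\theta_3)-1|$ is bounded below by, say, $L^{3/4}$ (so $1/\rho \lesssim L^{-3/2}$ there), and $A_2$ is the complement within $A$. On $A_1$ the density is pointwise $\lesssim L^{-1}\cdot L^{-3/2}\cdot \e^{-3}$, so $\int_{A_1}(\text{density})\,d(\text{target}) \lesssim \e^{-3}L^{-5/2}\Leb(A_1) \le \e^{-3}L^{-5/2}\Leb(A)$, and integrating against $d\hat\mu$ (a probability measure) keeps this bound — this yields the $\frac{1}{\e^3 L^2}\Leb(A)$ term in \eqref{eq:hatMuBound} with room to spare. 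On $A_2$ — the set where the product is within $L^{3/4}$ of $1$ — one does \emph{not} use the density but instead argues directly that $A_2$ is a thin set: the condition $f'(f(y_3)-x_3)(f'(y_3)-\cot\theta_3)\approx 1$, for $(y_3,\theta_3)$ with $\theta_3$ bounded away from $0,\pi$, forces $f'$ evaluated at one of its arguments to be small (of order $L^{3/4}/L = L^{-1/4}$ after dividing out), i.e. forces $f(y_3)-x_3$ or $y_3$ to lie within $\sim L^{-1/4}/K_1$ (up to the $L^{-1/4}$ slack) of $C'_\psi$, using $|\psi'(x)|\ge K_1^{-1}d(x,C'_\psi)$ from Remark 1. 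Hence $A_2$ is contained in a union of boundedly many strips of width $\lesssim L^{-1/4}$ in the relevant direction, and the \emph{Leb-stationarity on $\T^2$} of the projected chain gives $\hat\mu(A_2)\le \hat\mu(\text{those strips}\times\P^1)\lesssim L^{-1/4}$ directly. Combining, $\hat\mu(A) \lesssim L^{-1/4}(1 + \e^{-3}L^{-2}\Leb(A))$, which is the claim.

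The main obstacle, I expect, is the bookkeeping of the $\e$-powers and making sure the change of variables from $\int d\hat\mu(x_0,y_0,\theta_0)$ to an $\omega_1$-integral is legitimate and loses nothing: the factor $1/(2\e)^3$ in \eqref{density} is the density of $(\nu^\e)^3$, and after integrating out $\omega_2,\omega_3$ (each over an interval of length $\le 2\e$) one recovers $1/(2\e)$, which is exactly the normalization needed to turn $\sum_{\omega_1}(\cdots)$ integrated against $d\hat\mu$ into $\int_{[-\e,\e]}(\cdots)\,\frac{d\omega_1}{2\e}$ using that $\hat\mu$ projects to Lebesgue on $\T^2$ — but one must confirm the intermediate variables genuinely range over full-length intervals, which is where Lemma \ref{comp}(ii),(iii) and the finiteness of $C'_\psi, C''_\psi$ are used. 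A secondary subtlety is that the bound $1/|f''(x_0+\omega_1)|$ is only \emph{conditionally} integrable — its integral over $[-\e,\e]$ is $\sim \frac{1}{L}\log(1/(\text{distance to }C''_\psi))$-type or, more precisely since the zeros are simple, $\sim \frac{1}{L}\sqrt{\e}$ or a constant — one needs the simple-zero structure from (H2) to get a clean $O(1/L)$ (not $O(\log L / L)$), and then the $L^{-1/4}$ in the final bound must absorb any stray logarithm; checking that $L^{-1}\cdot(\text{that integral})\cdot(\text{const}) \le \hat C L^{-1/4}$ for large $L$ is the quantitative heart of the ``1'' term in \eqref{eq:hatMuBound}.
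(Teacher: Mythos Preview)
Your target-side decomposition $A = A_1 \cup A_2$ (large $\rho$ vs.\ small $\rho$) is essentially the paper's split $A = (A \cap \hat G) \cup (A \setminus \hat G)$, and your handling of $A_2$ via Lebesgue-stationarity of the projected chain is correct and matches the paper.

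The genuine gap is in the source-side factor $\sum_{\omega_1} 1/|f''(x_0+\omega_1)|$. You assert that since $\psi''$ has simple zeros, $1/|\psi''|$ is locally integrable; this is false. Near a simple zero $\hat z$ one has $|\psi''(z)| \sim |\psi'''(\hat z)|\,|z-\hat z|$, so $1/|\psi''(z)| \sim 1/|z-\hat z|$, which is \emph{not} integrable. (Your later suggestion ``$\sim \frac{1}{L}\sqrt{\e}$'' would correspond to a $|t|^{-1/2}$ singularity, not the $|t|^{-1}$ singularity that simple zeros actually produce; and the ``$O(1/L)$ vs.\ $O(\log L/L)$'' remark presupposes a convergent integral that does not exist.) Consequently no change of variables converts $\int d\hat\mu \cdot \sum_{\omega_1} 1/|f''(x_0+\omega_1)|$ into a finite $\omega_1$-integral, and the claimed ``$\sim 1/L$'' contribution is unjustified. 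The proposed change of variables is also muddled independently of this: the factor $1/(2\e)^3$ in the density is already part of the full Jacobian of $(\omega_1,\omega_2,\omega_3) \mapsto (x_3,y_3,\theta_3)$, and one cannot separately ``integrate out $\omega_2,\omega_3$'' from it.

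The paper sidesteps the non-integrable singularity by cutting it off, and this forces a case split on $\e$. For $\e \le L^{-1/2}$ one excises from the \emph{source} the set $B'' = \{(x,y): d(x, C''_\psi) \le 2K_2 L^{-1/2}\}$: on $B'' \times [0,\pi)$ the trivial bound $\hat P^{(3)} \le 1$ together with $\hat\mu(B'' \times [0,\pi)) = \Leb(B'') \lesssim L^{-1/2}$ suffices, while off $B''$ one has $|f''(x_0+\omega_1)| \ge L^{1/2}$ \emph{uniformly} for all $|\omega_1| \le \e$, so the density is $\lesssim \e^{-3} L^{-1/2} \rho^{-1}$ and the $A_1/A_2$ argument finishes. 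For $\e \ge L^{-1/2}$ an excision in $x_0$ is too costly, so instead one splits $\hat P^{(3)} = H_*((\nu^\e)^3)$ according to whether $x_0+\omega_1$ lands in $\tilde B'' = \{d(\cdot, C''_\psi) \le K_2 L^{-3/4}\}$: the bad piece has total mass $\le \sup_{x_0}\nu^\e\{\omega_1 : x_0+\omega_1 \in \tilde B''\} \lesssim \e^{-1}\Leb(\tilde B'') \le C L^{-1/4}$, and on the good piece $|f''(x_0+\omega_1)| \ge L^{1/4}$ uniformly. The exponent $L^{-1/4}$ in the lemma's statement is exactly what this second regime produces.
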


\begin{proof} 
By the stationarity of $\hat \mu$, we have, for every Borel set 
$A \subset \P\T^2$,
\begin{equation} \label{A}
\hat \mu(A) =  \int_{\P\T^2} \hat P^{(3)}_{(x_0, y_0, \theta_0)}(A) 
\, d \hat \mu(x_0, y_0, \theta_0)\ .
\end{equation}
Our plan is to decompose this integral into a main term and ``error terms", depending
on properties of the density of $\hat P^{(3)}_{(x_0, y_0, \theta_0)}$. The decomposition
is slightly different depending on whether $\e \le L^{-\frac12}$ or $\ge L^{-\frac12}$.

\medskip \noindent
{\bf The case $\e \le L^{-\frac12}$.} Let $K_2 \ge 1$ be such that $|\psi''(x)| \ge K_2^{-1} d(x,C''_\psi)$; such
a $K_2$ exists by (H1),(H2). Define $B'' = \{(x, y) : d(x,C_\psi'') \le 2K_2 L^{- 1/2} \}$.
Then splitting the right side of (\ref{A}) into
\begin{equation} \label{B}
\int_{B'' \times [0, \pi)} \  + \ \int_{\P \T^2 \setminus (B'' \times [0, \pi))} \ ,
\end{equation}
we see that the first integral is $\le \mbox{Leb}(B'') \le \frac{4K_2 M_2}{\sqrt L}$ where $M_2 = \# C_\psi''$. As for $(x_0,y_0) \not \in B''$, since $|f''(x_0+\omega)| \ge L^{\frac12}$, the density of 
$\hat P^{(3)}_{(x_0, y_0, \theta_0)}$ is $\le [(2\e)^3 M_2^{-1} L^{\frac12} \rho]^{-1}$
by Corollary \ref{cor:densityForm}.

To bound the second integral in (\ref{B}), we need to consider the zeros of $\rho$. As $A \subset \T^2 \times [\pi/4, 3 \pi/4]$, 
we have $\sin^2(\theta_3) \geq 1/2$. The form of $\rho$ in Corollary \ref{cor:densityForm} prompts us to decompose $A$ into 
$$
A = (A \cap \hat G) \cup (A \setminus \hat G)
$$ 
where $\hat G = G \times [0,\pi)$ and
$$
G = \{(x,y) : d(y, C'_\psi) > K_1 L^{-\frac12}, \ d(f(y)-x, C'_\psi) \ge K_1 L^{-\frac12}\}\ .
$$
Then on $\hat G \cap A$, we have
$\rho \ge \frac12 (\frac12 L)^2$ for $L$ sufficiently large. This gives
$$
\int_{\P \T^2 \setminus (B'' \times [0, \pi))} 
\hat P^{(3)}_{(x_0, y_0, \theta_0)}(A \cap \hat G) \, d \hat \mu \le 
\frac{C}{\e^3 \sqrt L} \frac{1}{L^2} \ \mbox{Leb}(A)\ .
$$
Finally, by the invariance of $\hat \mu$,
$$
\int_{\P \T^2 \setminus (B'' \times [0, \pi))} 
\hat P^{(3)}_{(x_0, y_0, \theta_0)}(A \setminus \hat G) \, d \hat \mu \ \le \ 
\hat \mu(A \setminus \hat G) = 
\mbox{Leb}(\T^2 \setminus G)\ .
$$
We claim that this is $ \lesssim L^{-\frac12}$. Clearly, Leb$\{d(y, C'_\psi) \le K_1 L^{-\frac12}\})
\approx L^{-\frac12}$. As for the second condition,
$$
\{y: f(y) \in (z-K_1 L^{-\frac12}, z+ K_1 L^{-\frac12})\}
= \{y: \psi(y) \in (z'-K_1 L^{-\frac32}, z'+ K_1 L^{-\frac32})\} 
$$
which in the worst case has Lebesgue measure $\lesssim L^{-\frac34}$ by (H1), (H2).

\medskip \noindent
{\bf The case $\e \ge L^{-\frac12}$.} Here we let $\tilde B'' = \{(x,y) : d(x,C_\psi'') \le K_2 L^{- 3/4}\}$, and decompose the right side of (\ref{A}) into
\[
\int (\hat P^{(3)}_{(x_0, y_0, \theta_0)})_1(A) \, d \hat \mu \ + \ 
\int (\hat P^{(3)}_{(x_0, y_0, \theta_0)})_2(A) \, d \hat \mu
\] 
where, in the notation in Sect. 3.2, 

\[
(\hat P^{(3)}_{(x_0, y_0, \theta_0)})_1 = 
H_* ((\nu^\e)^{ 3}|_{\{x_0 + \omega_1 \in \tilde B''\}}) \quad \mbox{and} \quad
(\hat P^{(3)}_{(x_0, y_0, \theta_0)})_2 = 
H_* ((\nu^\e)^{ 3}|_{\{x_0 + \omega_1 \notin \tilde B''\}}) \ .
\]
Then the first integral is bounded above by
$$
\sup_{x_0 \in \mathbb S^1} \nu^\e \{\omega_1 \in \tilde B'' - x_0\} 
\lesssim  \e^{-1} \Leb(\tilde B'') \leq Const \cdot L^{- 1/4}\ ,
$$
while the density of $(\hat P^{(3)}_{(x_0, y_0, \theta_0)})_2$ is $\le  [(2\e)^3 M_2^{-1} 
L^{1/4} \cdot \rho(x_3,y_3,\theta_3)]^{-1}$. The second integral is 
treated as in the case of $\e \le L^{-\frac12}$. 
\end{proof}

\smallskip
As discussed above, we now proceed to estimate
 the Lebesgue measure of the set that remains far
away from $\{\theta =0\}$ after $N$ steps, where $N$ is arbitrary for  now.
For fixed  $\uo = (\omega_1, \dots, \omega_N )$, we write
$(x_i,y_i) = F^i_{\uo} (x_0,y_0)$ for $1 \le i \le N$, and define $G_N=G_N(\omega_1, \dots, \omega_N )$ by
\begin{align*}
G_N = \{(x_0, y_0) \in \T^2 : & \, d(x_i + \omega_{i + 1}, C_\psi') \geq K_1 L^{-1 + \b} \text{ for all } 0 \leq i \leq N-1  \}\ .
\end{align*}
We remark that for $(x_0, y_0) \in G_N$, the orbit $F^i_{\uo} (x_0,y_0), i \le N$,
passes through uniformly hyperbolic regions of $\T^2$, where invariant
cones are preserved and $|f'(x_i + \omega_{i + 1})| \ge L^\beta$ for each $i<N$; see Remark 1 in Section 2.
We further define $\hat G_N = \{ (x_0,y_0,\theta_0) : (x_0,y_0) \in G_N\}$.

\smallskip
\begin{lem}\label{lem:angleMassEst} Let $\beta>0$ be given. We assume 
$L$ is sufficiently large (depending on $\b$).
Then for any $N \in \mathbb N$, $\e \in (0, \frac12]$ and 
$\omega_1, \dots, \omega_N  \in [- \e, \e]$, 
$$
\hat \mu (\hat G_N \cap \{|\tan \theta_N| > 1\}) \leq 
\frac{\hat C}{L^{\frac14}} \left(1+ \frac{1}{\e^3 L^{2+\beta N}} \right) \ .
$$
\end{lem}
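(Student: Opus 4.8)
The plan is to estimate $\hat\mu(\hat G_N \cap \{|\tan\theta_N| > 1\})$ by transporting the measure backwards: since $\hat\mu$ is stationary, one has, for every Borel $\hat A \subset \P\T^2$,
$$
\hat\mu(\hat A) = \int_{\P\T^2} \hat P^{(N)}_{(x_0,y_0,\theta_0)}(\hat A)\, d\hat\mu(x_0,y_0,\theta_0)\ ,
$$
but the more useful direction here is to push $\hat\mu$ forward $N$ steps by the \emph{fixed} maps $\hat F_{\omega_N}\circ\cdots\circ\hat F_{\omega_1}$ and observe that the event in question is determined by whether the orbit of $(x_0,y_0)$ stays in the hyperbolic region $G_N$ and what $\theta_N$ is. The first step is the geometric heart: I claim that on $\hat G_N$, the cone $\{|\tan\theta|\le 1\}$ (equivalently $C_{1}$ in the notation of Remark~1) is forward-invariant under each $\hat F_{\omega_{i+1}}$ along such an orbit, because $|f'(x_i+\omega_{i+1})| \ge L^{\b}$ on $G_N$, and moreover $\hat F_{\omega_{i+1}}$ maps $\{|\tan\theta|\le 1\}$ strictly \emph{into} a much narrower cone. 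From the formula $\tan\theta' = (f'(x+\omega)-\tan\theta)^{-1}$ in Eq.~(\ref{Fhat}), if $|f'(x+\omega)|\ge L^\b$ and $|\tan\theta|\le 1$ then $|\tan\theta'| \le (L^\b - 1)^{-1} \lesssim L^{-\b}$; thus after one step the vector already lies in a cone of width $O(L^{-\b})$, and it stays there. Consequently the set $\{(x_0,y_0,\theta_0): (x_0,y_0)\in G_N,\ |\tan\theta_N|>1\}$ can only occur if $\theta_0$ itself was already in the bad cone at \emph{every} intermediate step pulled back — in fact the preimage under $N$ steps of $\{|\tan\theta|>1\}$ intersected with $\hat G_N$ is contained in $G_N \times \{|\tan\theta_0| > c\,L^{\b(N-1)}\}$ or similar, i.e. in an angular sliver of width $\lesssim L^{-\b N}$ (possibly shifted away from $0$) above each point of $G_N$.

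Given this, the second step is to bound the $\hat\mu$-mass of that sliver. This is exactly what Lemma~\ref{lem:quantComp} is for: the sliver is contained in a set $\hat A$ with $\hat A \subset \{\theta \in [\pi/4, 3\pi/4]\}$ (after noting $|\tan\theta_N|>1$ means $\theta_N \in (\pi/4, 3\pi/4)$, and pulling back keeps us in a set one can cover by finitely many such $\theta$-bands — or, more cleanly, apply the estimate to the forward image, where the relevant set genuinely sits in a thin band around a single angle and one uses invariance of $\hat\mu$ plus a covering of the $\theta$-circle by $O(1)$ translates of $[\pi/4,3\pi/4]$). The Lebesgue measure of such a band over $G_N \subset \T^2$ is at most (width in $\theta$) $\times \Leb(\T^2) \lesssim L^{-\b N}$. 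Feeding $\Leb(\hat A)\lesssim L^{-\b N}$ into Eq.~(\ref{eq:hatMuBound}) gives
$$
\hat\mu(\hat A) \le \frac{\hat C}{L^{1/4}}\Big(1 + \frac{1}{\e^3 L^2}\, L^{-\b N}\Big) = \frac{\hat C}{L^{1/4}}\Big(1 + \frac{1}{\e^3 L^{2+\b N}}\Big)\ ,
$$
which is precisely the asserted bound (absorbing the $O(1)$ covering constant into $\hat C$).

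The main obstacle I anticipate is making the cone-invariance argument on $\hat G_N$ fully rigorous in the projective picture, in particular tracking that the bad cone $\{|\tan\theta|>1\}$ pulled back $N$ times really is an interval of angles of width $\lesssim L^{-\b N}$ and does not wrap around or split — one must verify that as long as $|f'(x_i+\omega_{i+1})|\ge L^\b$, the map $\theta\mapsto\arctan((f'-\tan\theta)^{-1})$ is a contraction on the relevant range with the right rate, and that the finitely many excluded values (where $f' = \tan\theta$, giving $\theta' = 0$) occur on a set of $\hat\mu$-measure zero by Corollary~\ref{cor:densityForm}. A secondary technical point is the careful handling of the $\theta$-band not being centered at $\pi/2$ when we apply Lemma~\ref{lem:quantComp}, which is why the covering-by-translates step is needed; this is routine but must be stated. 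Once these are in place, the inequality follows by direct substitution as above.
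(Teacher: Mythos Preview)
Your overall strategy is the paper's: show that $A := \hat G_N \cap \{|\tan\theta_N| > 1\}$ lies in $\{\theta_0 \in [\pi/4, 3\pi/4]\}$ with $\Leb(A) \lesssim L^{-\beta N}$, then apply Lemma~\ref{lem:quantComp}. But two points in your execution are off. First, your own forward cone argument already gives the location: $|\tan\theta_0|\le 1 \Rightarrow |\tan\theta_N|\le 1$ on $\hat G_N$, so by contrapositive $A \subset G_N \times (\pi/4, 3\pi/4)$, and no ``covering by translates'' is needed. Second, and more seriously, the containment $A \subset G_N \times \{|\tan\theta_0| > c\, L^{\beta(N-1)}\}$ is false. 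Iterating $\tan\theta_{i+1} = (f'(x_i+\omega_{i+1}) - \tan\theta_i)^{-1}$ backward from $|\tan\theta_N|>1$ yields only $|\tan\theta_i| > L^\beta - O(L^{-\beta})$ for each $i$; the lower bound does \emph{not} compound. The preimage is a short arc centered at the most contracted direction $\vartheta_0^-$ of $(dF^N_{\uo})_{(x_0,y_0)}$, and $\vartheta_0^-$ sits at distance $\sim L^{-\beta}$ from $\pi/2$ regardless of $N$; what shrinks with $N$ is the \emph{width} of that arc, and your cone iteration as written does not establish this.

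The paper obtains both facts at once from the singular value decomposition of $(dF^N_{\uo})_{(x_0,y_0)}$ on $G_N$: a direct check gives $\sigma \ge (\tfrac13 L^\beta)^N$ and $|\tan\vartheta_0^-|,\ |\tan\vartheta_N^-| \ge \tfrac12 L^\beta$. The latter gives $\{\theta_0 : |\tan\theta_N|>1\} \subset [\pi/4, 3\pi/4]$; the former, together with the fact that the projective preimage of an arc of length $O(1)$ about $\vartheta_N^-$ is an arc of length $O(\sigma^{-2})$ about $\vartheta_0^-$, gives $\Leb\{\theta_0 : |\tan\theta_N|>1\} \lesssim L^{-\beta N}$ for each fixed $(x_0,y_0)\in G_N$. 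Lemma~\ref{lem:quantComp} then applies directly to $A$, with no covering step.
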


\begin{proof} For $(x_0,y_0) \in G_N$, consider the singular value decomposition of 
$(dF_{\uo}^N)_{(x_0, y_0)}$. Let $\vartheta^-_0$ denote the angle corresponding to the most contracted direction of $(dF_{\uo}^N)_{(x_0, y_0)}$ and $\vartheta_N^-$ its
image under $(dF_{\uo}^N)_{(x_0, y_0)}$, and let $\sigma > 1 > \sigma^{-1}$ denote
the singular values of $(dF_{\uo}^N)_{(x_0, y_0)}$. A straightforward computation gives
\[
\frac12 L^\beta \leq |\tan \vartheta^-_0|, |\tan \vartheta_N^-| \quad \text{ and } 
\quad \sigma\geq \left( \frac13 L^\b  \right)^N \, .
\]
It follows immediately that for fixed $(x_0,y_0)$, 
$\{\theta_0 : |\tan \theta_N| >1\} \subset [\pi/4, 3\pi/4]$ and
$$
\mbox{Leb}\{\theta_0 : |\tan \theta_N| >1\} < \mbox{ const } L^{-\beta N}\ .
$$ 
Applying Lemma \ref{lem:quantComp} with $A = \hat G_N \cap \{ | \tan \theta_N| > 1\}$,
we obtain the asserted bound.
\end{proof}

\medskip
By the stationarity of $\hat \mu$, it is true for any $N$ that
$$
\lambda(\hat \mu) = \int \left( \int \| (dF_{\omega_{N + 1}})_{(x_N, y_N) }u_{\theta_N} \|
\ d (\hat F_{\omega_N} \circ \dots \circ \hat F_{\omega_1})_* \hat \mu \right)
d\nu^\e(\omega_1) \cdots d \nu^\e(\omega_{N + 1}) \, .
$$
We have chosen to estimate $\lambda(\hat \mu)$ one sample path at a time because  
we have information from Lemma \ref{lem:angleMassEst} on    
$(\hat F_{\omega_N} \circ \dots \circ \hat F_{\omega_1})_*\hat \mu$ for each sequence $\omega_1, \dots, \omega_N$.

\medskip
\begin{prop}\label{prop:consLE}
Let $\alpha, \b \in (0,1)$. Then, there are constants $C = C_{\alpha, \beta} > 0$
and $C' = C_{\alpha, \beta}' > 0$ such that for any $L$ sufficiently large, we have the following. Let $N = \lfloor C' L^{1 - \b} \rfloor$, $\e \in [ L^{- C L^{1 - \beta}}, \frac12]$, and fix arbitrary $\omega_1, \cdots, \omega_{N + 1} \in [- \e, \e]$. Then,
\begin{align}\label{main}
I \ := \ \int_{\P \T^2} \log \| (d F_{\omega_{N + 1}})_{(x_N, y_N) }u_{\theta_N} \|  \, d \hat \mu(x_0,y_0, \theta_0) \ \geq \ \a \log L \, .
\end{align}
\end{prop}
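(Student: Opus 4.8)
The plan is to split the integral $I = \int_{\P\T^2} \log\|(dF_{\omega_{N+1}})_{(x_N,y_N)} u_{\theta_N}\| \, d\hat\mu$ according to whether the initial point $(x_0,y_0,\theta_0)$ lies in $\hat G_N$ and, within $\hat G_N$, whether $|\tan\theta_N| \le 1$. The three pieces to control are: (i) the ``good'' piece $\hat G_N \cap \{|\tan\theta_N| \le 1\}$, where the orbit has passed through $N$ uniformly hyperbolic steps and the image direction $\theta_N$ is close to the expanding direction; (ii) the ``bad angle'' piece $\hat G_N \cap \{|\tan\theta_N| > 1\}$, whose $\hat\mu$-measure is tiny by Lemma~\ref{lem:angleMassEst}; and (iii) the complement $\P\T^2 \setminus \hat G_N$, i.e.\ orbits that wander into the critical strips $\{d(x_i+\omega_{i+1}, C_\psi') < K_1 L^{-1+\b}\}$ at some time $i \le N-1$, whose $\hat\mu$-measure (equivalently, Lebesgue measure, since $\hat\mu$ projects to Leb) is at most of order $NL^{-1+\b} \sim L^{-\b'}$ for a suitable $\b'$ once $L$ is large.

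For piece (i): on $\hat G_N \cap \{|\tan\theta_N|\le 1\}$ I would show $\log\|(dF_{\omega_{N+1}})_{(x_N,y_N)} u_{\theta_N}\| \ge \b' \log L$ for some positive constant, since $u_{\theta_N}$ has a definite horizontal component and $|f'(x_N+\omega_{N+1})|$ is either large (when $(x_N,y_N)\in G_{N+1}$ effectively) or at worst the map cannot contract a near-horizontal vector by more than a bounded factor — in fact on a near-horizontal vector $dF_\omega$ acts as roughly $(f'(x+\omega)v_x - v_y, v_x)$ so the norm is bounded below by $|v_x|$, giving $\log\|\cdots\| \ge -\text{const}$. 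The key positive contribution actually comes from noting that after $N$ hyperbolic steps the vector has genuinely aligned with the unstable cone, so one more step expands it by $\sim L^\b$; I'd extract $\alpha\log L$ by choosing $N$ large enough (of order $L^{1-\b}$) that the total accumulated expansion divided by the harmless one-step terms dominates — but since $I$ here is a \emph{single} term, not an $N$-step average, the cleaner route is: on the good set $\theta_N$ lies in a $O(L^{-\b N})$-neighborhood of $0$ and $\|(dF_{\omega_{N+1}})u_{\theta_N}\| \ge \tfrac12 L^\b$ provided $(x_N,y_N)$ is not in a critical strip, which I can fold into the definition or handle by one more application of the bad-region estimate. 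I'd then bound $\log$ of the norm from below by a fixed $\b'\log L$ on the good set, and the main term is $\ge \b' \log L \cdot \hat\mu(\text{good set}) \ge \b'\log L (1 - o(1))$.

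For pieces (ii) and (iii): I would bound $\log\|(dF_{\omega_{N+1}})_{(x_N,y_N)} u_{\theta_N}\|$ from below everywhere by $-\text{const}\cdot\log L$ (the norm of $dF_\omega$ on any unit vector is at least $(\text{something})^{-1}$, and is at most $\text{const}\cdot L$, so the logarithm is bounded below by $-C_0 - \log L$, say). Then the contribution of pieces (ii) and (iii) is at least $-(C_0+\log L)\cdot[\hat\mu(\hat G_N \cap \{|\tan\theta_N|>1\}) + \hat\mu(\P\T^2\setminus\hat G_N)]$. By Lemma~\ref{lem:angleMassEst} the first bracketed term is $\le \hat C L^{-1/4}(1 + \e^{-3}L^{-2-\b N})$, and the choice $\e \ge L^{-CL^{1-\b}}$ with $N = \lfloor C' L^{1-\b}\rfloor$ and $\b N$ comfortably exceeding $3CL^{1-\b}+2$ (i.e.\ $C'\b > 3C$) makes $\e^{-3}L^{-2-\b N} \le 1$, so this term is $\le 2\hat C L^{-1/4}$; multiplied by $\log L$ it is $o(1)$. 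The second term, $\text{Leb}(\T^2\setminus G_N) \le \text{const}\cdot N L^{-1+\b}$, times $\log L$, is also $o(1)$ since $NL^{-1+\b}\sim L^{-\text{const}}$. Combining, $I \ge \b'\log L - o(\log L) \ge \alpha\log L$ for $L$ large, after choosing the constants so that $\b' > \alpha$ (shrinking $\b$ if necessary, or absorbing into the ``$L$ large'' clause).

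The main obstacle is piece (i): making precise the claim that on $\hat G_N$ a near-horizontal vector is genuinely expanded by the last step, which requires tracking that the cone-invariance from Remark~1 forces $\theta_N$ into the unstable cone \emph{and} that $(x_N,y_N)$ is itself in a hyperbolic region — the definition of $G_N$ constrains $x_i+\omega_{i+1}$ only for $i\le N-1$, so one must either enlarge $N$ by one step in the definition, or argue that even if $(x_N,y_N)$ sits in a critical strip the already-aligned vector $u_{\theta_N}$ is not contracted below $|v_x|$, which yields $\log\|\cdots\|\ge -\text{const}$ there, still harmless because that sub-piece has small measure or contributes a bounded amount against full measure. Reconciling these bookkeeping details — in particular getting a \emph{positive} lower bound $\b'\log L$ rather than merely a bounded one on a full-measure set — is where the argument has to be set up carefully, and is presumably why the good set is taken to be $\hat G_N$ with the $L^{-1+\b}$ margin rather than a coarser region.
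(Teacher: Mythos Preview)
Your overall decomposition into the three pieces is exactly the paper's approach, and you correctly identify the main obstacle in piece (i): the definition of $G_N$ says nothing about $x_N+\omega_{N+1}$, so one must add a margin condition there (the paper passes to $G^*_N = G_N \cap \{d(x_N+\omega_{N+1},C_\psi')\geq K_1 m\}$ for a small constant $m$, exactly your ``fold into the definition'' suggestion).

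However, there is a genuine arithmetic error in your treatment of piece (iii). You write that $\mathrm{Leb}(\T^2\setminus G_N)\leq \text{const}\cdot NL^{-1+\b}$ and then claim this times $\log L$ is $o(1)$ ``since $NL^{-1+\b}\sim L^{-\text{const}}$.'' But with $N=\lfloor C'L^{1-\b}\rfloor$ one has $NL^{-1+\b}\approx C'$, a constant \emph{independent of $L$}. So piece (iii) contributes roughly $-C'\log L$, which competes directly with the main term rather than vanishing. Your conclusion $I\geq \b'\log L - o(\log L)$ therefore does not follow.

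This is not a cosmetic issue: it is precisely why the constants must be chosen as functions of $\alpha$. The paper sets $m=C'=\tfrac{p}{4K_1M_1}$ with $p=\tfrac14(1-\alpha)$, so that the total bad-set measure (pieces (ii), (iii), and the extra strip $G_N\setminus G^*_N$) is $\leq p$. On the good set $\hat G^*_N\cap\{|\tan\theta_N|\leq 1\}$ the integrand is $\geq \log(\tfrac14 mL)=\log L+O(1)$ (not merely $\b'\log L$ for some $\b'<1$), while on the bad set it is $\geq -\log(\text{const}\cdot L)$. This yields $I\geq (1-2p)\log L + O(1)\geq\alpha\log L$ for $L$ large. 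In short: the bad-region measure is a \emph{small constant}, not something tending to zero, and you must choose $C'$ small depending on $\alpha$ to win; treating it as negligible is where the proposal breaks.
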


Integrating (\ref{main}) over $(\omega_1, \dots, \omega_{N + 1})$
gives $\lambda(\hat \mu) \ge \a \log L$. 
As $\lambda^\e_1 \ge \lambda(\hat \mu)$, part (b) of Theorem \ref{thm:lyapEst} follows
immediately from this proposition.

\smallskip
\begin{proof} The number $N$ will be determined in the course of the proof, 
and $L$ will be enlarged a finite number of times as we go along. 
As usual, we will split $I$, the integral in (\ref{main}), to one on a good and 
a bad set.
The good set is essentially the one in Lemma \ref{lem:angleMassEst}, with an additional condition on 
$(x_N,y_N)$, where $dF$ will be evaluated. Let 
\begin{align*}
G^*_N = \{(x_0, y_0) \in G_N : d(x_N + \omega_{N + 1}, C_\psi') \geq K_1 m \} \, ,
\end{align*}
where $m > 0$ is a small parameter to be specified later. 
As before, we let $\hat G^*_N = G^*_N \times [0,2\pi)$.
Then $\mathcal G:= \hat G^*_N \cap \{|\tan \theta_N| \leq 1\}$ is the good set; 
on $\mathcal G$, the integrand in (\ref{main}) is $\geq \log \big( \frac{m L}{4} \big)$. Elsewhere we use the worst
 lower bound $- \log (2 \|\psi'\|_{C_0} L)$. Altogether we have
 \begin{align}\label{eq:intermediate}
I \ge \ \log (\frac14 m L) - \log \frac{m \|\psi'\| L^2}{2}  \ \hat \mu (\mathcal B) \ ,
\end{align}
where 
\begin{equation} \label{bad}
\mathcal B = \P\T^2 \setminus \mathcal G =
(\P\T^2 \setminus \hat G^*_N) \cup (\hat G^*_N 
\cap \{|\tan \theta_N| > 1\})\ .
\end{equation}
We now bound $\hat \mu(\mathcal B)$. First, 
\begin{equation} \label{error1}
\hat \mu(\P\T^2 \setminus \hat G^*_N) = 1- \Leb(G^*_N) \le K_1 M_1 (m + NL^{-1 + \b}) \, ,
\end{equation}
where $M_1 = \# C_\psi'$. Letting $N= \lfloor C'L^{1-\b} \rfloor $ and $m=C'=\frac{p}{4K_1M_1}$
where $p$ is a small number to be determined,
we obtain $\hat \mu(\P\T^2 \setminus \hat G^*_N) \le \frac12 p$. 
From Lemma \ref{lem:angleMassEst}, 
\begin{equation} \label{error2}
\hat \mu(\hat G^*_N \cap \{|\tan \theta_N| > 1\}) \le 
\frac{\hat C}{L^{\frac14}} \bigg(1 + \frac{ 1}{(\e L^{\frac13 \b N})^3} \bigg)\ .
\end{equation}
For $N$ as above and $\e$ in the designated range (with $C = \frac \b 3 C'$), 
the right side of (\ref{error2}) is easily made $< \frac12 p$ 
by taking $L$ large, so we have $\hat \mu(\mathcal B) \le p$. 
Plugging into \eqref{eq:intermediate}, we see that
$$
I \ \ge \ (1 - 2p) \log L\ - \{\mbox{ terms involving } \log p, \ p\log p \ 
\mbox{ and constants }\}\ .
$$
Setting $p = \frac14 (1 - \a)$ and taking $L$ large enough, one ensures that
$I > \a \log L$.
\end{proof}


\section{Proof of Theorem \ref{thm:lyapEst2}}\label{Sb=1o}

We now show that with the additional assumption (H3), the same result holds
for $\e \ge L^{- C L^{-2+\b}}$.

\subsection{Proof of theorem modulo main proposition}

As the idea of the proof of Theorem \ref{thm:lyapEst2} closely parallels that of Proposition \ref{prop:consLE}, it is useful to recapitulate the main ideas:
\begin{itemize}
\item[1.] The main Lyapunov exponent estimate is carried on the subset
$\{(x_0,y_0,\theta_0): (x_0,y_0) \in G_N,
|\tan \theta_N|<1\}$ of $\P\T^2$, where $G_N$
consists of points whose orbits stay 
 $\gtrsim L^{-1+\b}$ away from $C'_\psi \times \mathbb S^1$ in their first $N$ iterates.
\item[2.] Since Leb$(G^c_N)\sim NL^{-1+\b}$, we must take $N \lesssim L^{1-\b}$.
\item[3.] By the uniform hyperbolicity of $F^N_{\uo}$ on $G_N$, Leb$\{|\tan \theta_N|>1\} 
\sim L^{-cN}$.
\item[4.] For $\hat \mu\{|\tan \theta_N|>1\}$ to be small, we must have $\frac{1}{\e^3} L^{-cN} \ll 1$ (Lemma \ref{lem:angleMassEst}).
\end{itemize} 
Items 2--4 together suggest that we require $\e > L^{-\frac13 cN} \ge 
L^{-c'L^{-1+\b}}$, and we checked that for this $\e$ the proof goes through.

\bigskip
The proof of Theorem \ref{thm:lyapEst2} we now present differs from the above in the following way:
The set $G_N$, which plays the same role as in Theorem \ref{thm:lyapEst}, will be different.
It will satisfy 
\begin{itemize}
\item[(A)] Leb$(G^c_N)\sim NL^{-2+\b}$, and
\item[(B)] the composite map $dF^N_{\uo}$ is uniformly hyperbolic on $G_N$.
\end{itemize}
The idea is as follows: To decrease $\e$, we must increase $N$, while keeping
the set $G^c_N$ small. This can be done by allowing the random orbit to come closer to
$C_\psi' \times \mathbb S^1$, but with that, one cannot expect uniform hyperbolicity
in each of the first $N$ iterations, so we require only (B). This is the main difference
between Theorems \ref{thm:lyapEst} and \ref{thm:lyapEst2}.
Once $G_N$ is properly identified and properties (A) and (B) are proved, the rest of
the proof follows that of Theorem \ref{thm:lyapEst}: Property (A) permits us to take 
$N \sim L^{2-\b}$ in item 2, and item 3 is valid by Property (B).
Item 4 is general and therefore unchanged, leading
to the conclusion that it suffices to assume $\e > L^{-c'L^{-2+\b}}$.
As the arguments follow those in Theorem \ref{thm:lyapEst} {\it verbatim} modulo the bounds above and 
accompanying constants, we will not repeat
the proof. The rest of this section is focused on
producing $G_N$ with the required properties.

\bigskip \noindent
{\it It is assumed from here on} that (H3)$(c_0)$ holds,
and $L, a$ and $\e$ are as in Theorem \ref{thm:lyapEst2}. Having proved Theorem \ref{thm:lyapEst}, 
we may assume $\e \le L^{-1}$. In light of the discussion above,
$\omega_1, \cdots, \omega_{N} , \omega_{N + 1} \in [-\e,\e]$ will be fixed throughout, and
$(x_i,y_i)=F^i_{\uo}(x_0,y_0)$ as before. 

\bigskip \noindent
{\bf Definition of $G_N$.} For arbitrary $N$ we define $G_N$ to be
\begin{align*}
G_N = \{(x_0, y_0) \in \T^2 : 
(a) & \text{ for all } 0 \leq i \leq N -1, \, \\
&   \qquad (i) \ d(x_i + \omega_{i + 1} , C_\psi') \geq K_1 L^{-2 + \b} , \\
&  \qquad (ii) \ d(x_i + \omega_{i + 1} , C_\psi')\cdot d(x_{i+1} + \omega_{i + 2} , C_\psi') \geq K_1^{2} L^{-2+ \b / 2} , \\
(b) & \ d(x_0 + \omega_1, C_\psi'), \ d(x_{N-1} + \omega_{N}, C_\psi')\geq p/(16M_1)
\} 
\end{align*}
where $M_1 = \# C_\psi'$ and $p=p(\alpha)$ is a small number to be determined. 
Notice that (a)(i)  implies only $|f'(x_i+\omega_{i+1})| \geq L^{-1 + \b}$,
not enough to guarantee expansion in the horizontal direction. We remark also that even though (a)(ii)
implies $|f'(x_i + \omega_{i + 1}) f'(x_{i+1} + \omega_{i + 2})| \geq L^{\b / 2}$,
hyperbolicity does not follow without control of the {\it angles} of the vectors involved. 

\smallskip
\begin{lem}[Property (A)] There exists $C_2 \ge 1$ such that for all $N$, 
$$
\mbox{Leb}(G_N^c) \le C_2 NL^{-2+\b} + \frac{p}{4}\ .
$$
\end{lem}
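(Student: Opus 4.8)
The plan is to bound $\mathrm{Leb}(G_N^c)$ by estimating separately the Lebesgue measure of the set of $(x_0,y_0)$ violating each of the three defining conditions (a)(i), (a)(ii), (b), and then summing. The key geometric input is that the map $F_{\uo}^i$ is area-preserving for each $i$, so for any Borel set $E \subset \mathbb S^1$ we have $\mathrm{Leb}\{(x_0,y_0): x_i + \omega_{i+1} \in E\} = \mathrm{Leb}\{(x,y) : x \in E\} = |E|$, where $|E|$ denotes the one-dimensional Lebesgue measure of $E$; this reduces everything to measuring subsets of $\mathbb S^1$ defined via $C_\psi'$.

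First I would handle condition (a)(i): for each $0 \le i \le N-1$, the bad set $\{d(x_i + \omega_{i+1}, C_\psi') < K_1 L^{-2+\b}\}$ has Lebesgue measure at most $2 M_1 K_1 L^{-2+\b}$ by the area-preservation remark above, so summing over the $N$ values of $i$ contributes $\le 2 M_1 K_1 N L^{-2+\b}$. Next, condition (b) is a single constraint (really two, on $i=0$ and $i=N-1$), each of the form $d(x_i + \omega_{i+1}, C_\psi') \ge p/(16 M_1)$; the complement has measure $\le 2 M_1 \cdot 2 \cdot p/(16 M_1) = p/4$, which is exactly the additive $p/4$ term in the statement. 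The main work is condition (a)(ii): I need to bound $\mathrm{Leb}\{(x_0,y_0) : d(x_i+\omega_{i+1},C_\psi') \cdot d(x_{i+1}+\omega_{i+2},C_\psi') < K_1^2 L^{-2+\b/2}\}$ for each $i$. Writing $s = d(x_i + \omega_{i+1}, C_\psi')$ and $t = d(x_{i+1}+\omega_{i+2}, C_\psi')$, I would slice according to dyadic ranges of $s$: on $\{2^{-k-1} \le s/(K_1) < 2^{-k}\}$ the condition forces $t < K_1 2^{k+1} L^{-2+\b/2}$, and crucially $F$ maps the vertical strip $\{d(x_i+\omega_{i+1},C_\psi') \approx K_1 2^{-k}\}$ across $\T^2$ in a way that — because the relevant factor in $dF$ is $f'(x_i+\omega_{i+1})$ of size $\sim L \cdot 2^{-k}$ — the preimage under $F_{\uo}^{i+1}$ of the thin strip where $t$ is small is again thin. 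The cleanest route is probably to use the coarea/Fubini structure: fix $\omega$'s, and on the set where $x_i + \omega_{i+1}$ lies in a dyadic annulus around $C_\psi'$ of inner radius $\sim K_1 2^{-k-1}$, observe $|f'| \gtrsim L 2^{-k}$ there, so the conditional distribution of $x_{i+1}+\omega_{i+2}$ (pushing forward by the nondegenerate map and using area preservation of the composition) has density controlled, hence the probability that $t$ falls in an interval of length $\sim K_1 2^k L^{-2+\b/2}$ around $C_\psi'$ is $\lesssim 2^k L^{-2+\b/2}$, and multiplying by the measure $\sim 2^{-k}$ of the annulus in $x_i+\omega_{i+1}$ gives a per-scale bound $\lesssim L^{-2+\b/2}$ independent of $k$; summing over the $O(\log L)$ relevant scales $k$ gives $\lesssim L^{-2+\b/2}\log L \ll L^{-2+\b}$ for $L$ large, times a factor $N$ from summing over $i$. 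Absorbing constants into a single $C_2$ and using $L^{-2+\b/2}\log L \le L^{-2+\b}$ for large $L$, I would conclude $\mathrm{Leb}(G_N^c) \le C_2 N L^{-2+\b} + p/4$.

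The main obstacle I anticipate is making the (a)(ii) estimate rigorous: the naive worry is that the two events $\{s \text{ small}\}$ and $\{t \text{ small}\}$ could be positively correlated in a way that makes the product condition only as restrictive as a single smallness condition, which would cost a factor $L^{-1+\b}$ rather than $L^{-2+\b}$ and break the whole scheme. The resolution is precisely that where $s$ is not extremely small, $|f'(x_i+\omega_{i+1})|$ is large, so the map $F_{\omega_{i+1}}$ expands the $x$-direction by that large factor, which means the $y$-fibers over a thin $t$-strip pull back to something thin in $(x_i,y_i)$ and hence (by area preservation one more step back) thin in $(x_0,y_0)$ — i.e., the conditional density of $x_{i+1}+\omega_{i+2}$ given the value of $x_i+\omega_{i+1}$ is bounded, making the two smallness events effectively independent at each dyadic scale. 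I would likely isolate this as a short sublemma: for fixed $\omega$'s and fixed value $v$ of $x_i+\omega_{i+1}$ with $d(v,C_\psi') \ge K_1 L^{-2+\b}$, the set $\{y_i : d(x_{i+1}+\omega_{i+2},C_\psi') < \delta\}$ has measure $\lesssim M_1 \delta$ (uniformly, since the $y_i$-coordinate of $F_{\omega_{i+1}}$ is just a translate of $x_i$, so this is really just a one-variable statement about $f$ and is again a direct consequence of (H1)–(H2)), then integrate in $v$ over the dyadic annuli. Everything else is bookkeeping: collecting the three contributions, noting the $i$-sum produces the linear-in-$N$ factor, and choosing $C_2$ to dominate all the suppressed constants (including the $\log L$ and the geometry constants $K_1, M_1, M_2$) once $L$ is large enough.
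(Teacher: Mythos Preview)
Your proposal is correct and follows essentially the same route as the paper: use area-preservation of $F_{\uo}^i$ to reduce each condition to a fixed-time measure estimate on $\T^2$, handle (a)(i) and (b) trivially, and for (a)(ii) use Fubini in the $(x_i+\omega_{i+1},\,y_i)$ coordinates together with the bound $L^{-2+\b/2}\log L \le L^{-2+\b}$.

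The one place where you detour unnecessarily is the discussion of $|f'|$ and ``expansion'' controlling the conditional density. This is a red herring. For fixed $v = x_i+\omega_{i+1}$, the map $y_i \mapsto x_{i+1}+\omega_{i+2} = f(v) - y_i + \omega_{i+2}$ is an \emph{isometry} of $\mathbb S^1$, so the conditional measure of $\{y_i : d(x_{i+1}+\omega_{i+2},C_\psi') < \delta\}$ is exactly $\le 2M_1\delta$ --- no appeal to $|f'|$, (H1)--(H2), or any expansion is needed. Your ``sublemma'' states the right fact but for the wrong reason. The paper exploits this directly: it defines $A_2 = \{(x,y): d(x,C_\psi')\ge K_1L^{-2+\b},\ d(x,C_\psi')\cdot d(fx-y,C_\psi')\ge K_1^2L^{-2+\b/2}\}$, bounds $\mathrm{Leb}(A_2^c)$ via Fubini as $\mathrm{Leb}(A_1^c) + \int_{A_1} 2M_1K_1^2 L^{-2+\b/2}/d(x,C_\psi')\,dx \lesssim L^{-2+\b/2}\log L$, and then observes that the bad set for (a)(i)+(a)(ii) at index $i$ is an area-preserving preimage of (a translate of) $A_2^c$. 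Your dyadic sum over scales is just a discretization of the integral $\int 1/d(x,C_\psi')\,dx \sim \log L$; the paper's continuous version is shorter.
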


\begin{proof}
Let 
\begin{align*}
A_1 &= \{x \in [0,1) : d(x, C_\psi') \geq K_1 L^{-2 + \b} \} \, , \\
A_2 &= \{(x, y) \in \T^2 : x \in A_1, \text{ and } d(x, C_\psi') \cdot d(f x - y, C_\psi') \geq K_1^2 L^{-2 + \b/2} \} \, .
\end{align*}
We begin by estimating $\Leb(A_2)$. Note that $\Leb(A_1^c) \leq 2 M_1 K_1 L^{-2 + \b}$, and for each fixed $x \in A_1$, 
\begin{equation}
\Leb \left\{y \in [0,1) : d(f x - y, C_\psi') < \frac{K_1^2 L^{-2 + \b/2}}{d(x, C_\psi')} \right\} 
\leq \frac{2 M_1 K_1^2 L^{-2 + \b/2}}{d(x, C_\psi')} \, ,
\end{equation}
hence
\begin{align*}
\Leb A_2^c & \leq \Leb A_1^c + \int_{x \in A_1} \frac{2 M_1 K_1^2 L^{-2 + \b/2}}{d(x, C_\psi')} dx \, .
\end{align*}
Let $\hat c = \frac12\min\{ d(\hat x,\hat x')  : \hat x, \hat x' \in C_\psi', \hat x \neq \hat x'\}$ . We split the integral above into $\int_{d(x, C_\psi') > \hat c}$ $+ \int_{K_1 L^{-2 + \b} \leq d(x, C_\psi') \leq \hat c}$. The first one is bounded from above by $2 M_1 K_1^2 \hat c^{-1} L^{-2 + \b/2}$, and the second by
\begin{equation}
 4 M_1^2 K_1^2 L^{-2 + \b/2} { \int_{ K_1 L^{-2 + \b} }^{\hat c} \frac{du}{u} }
 \leq 4 (2 - \b) M_1^2 K_1^2 L^{-2 + \b/2}  \log L   \, ,
\end{equation}
(having used that $- \log K_1$ and $\log \hat c$ are $< 0$) and so on taking $L$ large enough so that $L^{\b/2} \geq \log L$, it follows that $\Leb(A_2^c) \leq C_2 L^{-2 + \b}$, where $C_2 = C_{2, \psi}$ depends on $\psi$ alone.

Let $\tilde G_N$ be equal to $G_N$ with condition (b) removed. Then
$$
\tilde G_N = \bigcap_{i = 0}^{N-1}  (F_{\uo}^i)^{-1} \big(  A_2 - (\omega_{i + 1}, 0) \big)\ ,
$$
so $\Leb(\tilde G_N) \geq 1 - C_2 N L^{-2 + \b}$. The rest is obvious.
\end{proof}

\smallskip
\begin{prop}[Property (B)] \label{prop:coneEstimate2} For any $N \ge 2$, 
$(dF^N_{\uo})_{(x_0, y_0)}$ is hyperbolic on $G_N$ with the following uniform bounds:
The larger singular value $\sigma_1$ of $(dF^N_{\uo})_{(x_0, y_0)}$ satisfies
\[
\sigma_1\big(  (dF^N_{\uo})_{(x_0, y_0)} \big) \geq L^{\frac{\b}{15} N } \, ,
\]
and if $\vartheta_0^- \in [0,\pi)$ denotes the most contracting direction of $(dF^N_{\uo})_{(x_0, y_0)}$ and $\vartheta_N^- \in [0,\pi)$ its image, then 
\[
|\vartheta_0^- - \pi/2|, |\vartheta_N^- - \pi/2| \leq L^{- \b} \, .
\]
\end{prop}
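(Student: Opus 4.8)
The plan is to set up an invariant cone field adapted to the new, weaker hypotheses in the definition of $G_N$, and to track how a narrow cone around the horizontal direction contracts (in angle) and expands (in length) under each application of $dF_{\omega_{i+1}}$. Recall that in coordinates $v = (v_x, v_y)$, writing $k = v_y/v_x = \tan\theta$, the action of $dF_{\omega}$ at a point with $f' = f'(x+\omega)$ sends $k \mapsto (f' - k)^{-1}$ in the $y$-slot after the coordinate swap; equivalently, $\cot\theta' = f' - \tan\theta$. The key quantitative point is that one step alone may be bad (condition (a)(i) gives only $|f'(x_i+\omega_{i+1})| \ge L^{-1+\b}$), but by (a)(ii) two consecutive steps together contract a cone that has already been flattened toward the horizontal, and by (H3)$(c_0)$ one cannot have three consecutive bad steps.

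First I would define, for a parameter like $\delta_i \sim L^{-1+\b}$ (comparable to $d(x_i+\omega_{i+1}, C'_\psi)$, bounded below by condition (a)), the cone $\Cc_i = \{|\tan\theta| \le \delta_i\}$ or some variant with slopes tuned to the position in the orbit, and check the two basic estimates: (i) \emph{cone invariance}, i.e. $dF_{\omega_{i+1}}(\Cc_{i}) \subset \Cc_{i+1}$, which amounts to showing that if $|\tan\theta| \le \delta_i$ then $|\tan\theta'| = |f'(x_i+\omega_{i+1}) - \tan\theta|^{-1} \le \delta_{i+1}$ — here one must handle the case $|f'|$ small by using that after such a step $\tan\theta'$ is \emph{large}, so that the \emph{next} step, which by (a)(ii) has $|f'(x_{i+1}+\omega_{i+2})| \gtrsim L^{\b/2}/\delta_i$ large, brings the slope back down; and (ii) \emph{expansion}: a vector in $\Cc_i$ is expanded by a factor $\gtrsim |f'(x_i+\omega_{i+1})|$ up to bounded multiplicative error when $|f'|$ is large, and is not contracted by worse than $\sim L^{-1+\b}$ in a bad step, but a bad step is always followed (and/or preceded) by a very expanding one, so that over any window of two or three consecutive steps the product of expansion factors is $\ge L^{\b/\text{const}}$. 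Multiplying over the $N$ steps, grouped into blocks dictated by the no-three-consecutive-bad-steps consequence of (H3)$(c_0)$, yields $\sigma_1 \ge L^{(\b/15)N}$ for $L$ large; the exponent $1/15$ is just the outcome of the bookkeeping and is not sharp.

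Next, for the statement about the most contracting direction $\vartheta_0^-$ and its image $\vartheta_N^-$: by area preservation $\sigma_1 \sigma_2 = 1$, so $\sigma_2 \le L^{-(\b/15)N}$, and the singular directions are determined by $(dF^N_{\uo})^{\mathsf T}(dF^N_{\uo})$. The contracting singular direction at the source is $(dF^N_{\uo})^{-1}$ of the contracting singular direction at the target; since $(dF^N_{\uo})$ maps the horizontal-ish cone $\Cc_0$ into a very thin cone $\Cc_N$ around the horizontal and expands it, its \emph{inverse} maps horizontal-ish vectors to nearly \emph{vertical} ones — so $\vartheta_0^-$ (the pre-image of the most contracted output direction) is within $O(L^{-\b})$ of $\pi/2$, and similarly running the analogous cone argument for $(dF^N_{\uo})^{-1}$ (equivalently, noting that the image $\vartheta_N^-$ must be nearly orthogonal to the expanded cone $\Cc_N$, which is within $O(\delta_N) = O(L^{-1+\b})$ of horizontal) gives $|\vartheta_N^- - \pi/2| \le L^{-\b}$. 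One should double-check the constants so the bound is genuinely $L^{-\b}$ and not merely $L^{-\b/2}$; widening $\Cc_i$ slightly or enlarging $L$ absorbs any loss.

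The main obstacle is step (i)–(ii) above: engineering a single cone field (or finite family of cone widths) that is simultaneously preserved and uniformly expanding despite individual steps violating cone invariance. The delicate point is the interaction between a bad step $i$ (small $|f'(x_i+\omega_{i+1})|$, which sends the slope to something large, $|\tan\theta_{i+1}| \sim |f'(x_i+\omega_{i+1})|^{-1} \lesssim L^{1-\b}$) and the forced largeness of $|f'(x_{i+1}+\omega_{i+2})|$ from (a)(ii): one must verify that $|f'(x_{i+1}+\omega_{i+2}) - \tan\theta_{i+1}|^{-1}$ is again small, which needs $|f'(x_{i+1}+\omega_{i+2})|$ to dominate $|\tan\theta_{i+1}|$ with room to spare — precisely what the product lower bound $d(x_i+\omega_{i+1},C'_\psi)\cdot d(x_{i+1}+\omega_{i+2},C'_\psi) \ge K_1^2 L^{-2+\b/2}$ buys, since then $|f'(x_{i+1}+\omega_{i+2})| \gtrsim L^{-2+\b/2}/d(x_i+\omega_{i+1},C'_\psi) \gtrsim L^{-2+\b/2}\cdot|\tan\theta_{i+1}|/K_1$, i.e.\ $|f'(x_{i+1}+\omega_{i+2})|$ beats $|\tan\theta_{i+1}|$ by a factor $\sim L^{\b/2}$. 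Getting all these inequalities to chain correctly — and confirming that (H3)$(c_0)$ indeed rules out three consecutive steps with $|f'|$ small, so that the worst block one ever faces is (bad, bad, good) or (good, bad, bad) and never longer — is the crux; the length and angle estimates then follow by routine accounting.
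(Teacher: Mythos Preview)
Your overall strategy---track a cone around the horizontal direction through the composition, showing it is (eventually) preserved and vectors in it grow---is the same as the paper's. The implementation, however, differs in an important way, and your description of the role of (H3) contains a genuine inaccuracy.

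The paper does \emph{not} work with a dichotomy good/bad. It partitions $\T^2$ into \emph{three} regions $G$, $I$, $B$ according to whether $d(x,C'_\psi)$ is $\geq c$, in $[\sqrt{c/L},c)$, or $< \sqrt{c/L}$, encodes orbits symbolically, and then uses (H3) (via a short lemma) to constrain the allowable ``excursion words'' between visits to $G$ to a finite list $\mathcal V=\{B,\,BB,\,I^k,\,BI^k,\,I^kB,\,BI^kB\}$. Each word type is then handled by hand using three fixed cones $\Cc_n\subset\Cc_1\subset\Cc_w$ (narrow/unit/wide), the most delicate case being $BIB$. Your assertion that ``(H3) rules out three consecutive steps with $|f'|$ small, so the worst block is (bad, bad, good) or (good, bad, bad) and never longer'' is not what (H3) gives: if ``bad'' means $d(x,C'_\psi)<c$, then arbitrarily long runs $I^k$ are permitted and must be dealt with; if ``bad'' means the inner region $B$, then your blocks miss the intermediate $I$-steps entirely, and in particular the word $BIB$ that the paper singles out for special treatment.

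That said, your inductive idea---propagate the invariant $|\tan\theta_{i+1}|\le 2/|f'(x_i+\omega_{i+1})|$ using only (a)(ii), which forces $|f'(x_{i+1}+\omega_{i+2})|\ge L^{\b/2}|\tan\theta_{i+1}|/2$ at every step---is actually cleaner than the paper's case analysis and, if carried out carefully (pairing steps via $a_ia_{i+1}\ge L^{\b/2}$ to bound $\prod a_i\ge L^{\b N/4}$, and using condition (b) for the base case at both ends, for the map and its adjoint), appears to prove the proposition \emph{without} invoking (H3) at all. You do not seem to have noticed this; instead you reach for (H3) as a crutch for a block-length bound that (a)(ii) already provides through the pairing. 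So the gap is not fatal, but your account of what (H3) does is wrong, and you should either drop the appeal to (H3) and make the inductive/pairing argument precise, or adopt the paper's three-tier encoding and treat the full list $\mathcal V$ word by word.
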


\medskip
The bulk of the work in the proof of Theorem \ref{thm:lyapEst2} goes into proving this proposition.


\subsection{Proof of Property (B) modulo technical estimates}

 Let $c =c_\psi \ll c_0$ where $c_0$ is as in (H3); we stipulate additionally that $c \leq p / 16 M_1$, where $p=  p_\alpha$ and $M_1$ are as before. First we introduce the following symbolic encoding of $\T^2$. Let
$$
 B = {\mathcal N}_{\sqrt{\frac{c}{L}}}( C'_\psi) \times \mathbb S^1, \quad 
I = {\mathcal N}_c( C'_\psi) \times \mathbb S^1 \setminus B, 
\quad \mbox{and} \quad G = \T^2 \setminus (B \cup I)\ .
$$
To each $(x_0, y_0) \in \T^2$ we associate a symbolic sequence 
$$(x_0,y_0) \ \mapsto \ \bar W = W_{N-1} \cdots W_1 W_0 \in \{B, I, G\}^N\ ,
$$
where 
$(x_i + \omega_{i + 1}, y_i) \in W_i$. We will refer to any symbolic sequence of length $\ge 1$, 
e.g. $\bar V = GBBG$, as a {\it word}, and use Len$(\bar V)$ to denote the length
of $\bar V$, i.e., the number of letters it contains. We also write
$G^k$ as shorthand for a word consisting
of $k$ copies of $G$. Notice that symbolic sequences are to be read from right
to left.

The following is a direct consequence of (H3). 

\begin{lem}\label{lem:badToGood} Assume that $\e < L^{-1}$. Let $(x_0,y_0) \in \T^2$ be such that $(x_0 + \omega_1, y_0) \in B \cup I$ and $(x_1 + \omega_2, y_1) \in B$.
Then $(x_2 + \omega_3, y_2) \in G$. 
\end{lem}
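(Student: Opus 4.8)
The plan is to express $x_2+\omega_3$ as an explicit function of the earlier orbit data and then compare it, via the triangle inequality on $\mathbb S^1$, with a quantity of the form $f(\hat x)-\hat x'$ with $\hat x,\hat x'\in C'_\psi$, for which (H3)$(c_0)$ delivers exactly the desired conclusion. Since $F_\omega(x,y)=(f(x+\omega)-y,\,x+\omega)$, the random orbit obeys $y_{i+1}=x_i+\omega_{i+1}$ and $x_{i+1}=f(x_i+\omega_{i+1})-y_i$, so that
\[
x_2+\omega_3 \;=\; f(x_1+\omega_2)-(x_0+\omega_1)+\omega_3 \pmod 1 .
\]

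First I would choose $\hat x\in C'_\psi$ realizing $d(x_1+\omega_2,C'_\psi)$ and $\hat x'\in C'_\psi$ realizing $d(x_0+\omega_1,C'_\psi)$; the hypotheses give $d(x_1+\omega_2,\hat x)<\sqrt{c/L}$ (because $(x_1+\omega_2,y_1)\in B$) and $d(x_0+\omega_1,\hat x')<c$ (because $(x_0+\omega_1,y_0)\in B\cup I=\mathcal N_c(C'_\psi)\times\mathbb S^1$). The crux is the estimate of $|f(x_1+\omega_2)-f(\hat x)|$: since $f=L\psi+a$ and $\psi'(\hat x)=0$, Taylor's theorem bounds it by $\tfrac12 L\|\psi''\|_{C^0}\,d(x_1+\omega_2,\hat x)^2<\tfrac12\|\psi''\|_{C^0}\,c$, the factor $L$ being cancelled precisely by the squared radius $\sqrt{c/L}$ of $B$. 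Combining this with $d(x_0+\omega_1,\hat x')<c$ and $|\omega_3|\le\e<L^{-1}$, the triangle inequality gives
\[
d\bigl(x_2+\omega_3,\; f(\hat x)-\hat x'\bigr)\;<\;\bigl(1+\tfrac12\|\psi''\|_{C^0}\bigr)c + L^{-1}.
\]
Next, since (H3)$(c_0)$ asserts $d\bigl(f(\hat x)-\hat x',\,C'_\psi\bigr)\ge c_0$, one more triangle inequality yields
\[
d(x_2+\omega_3,\,C'_\psi)\;\ge\;c_0-\bigl(1+\tfrac12\|\psi''\|_{C^0}\bigr)c-L^{-1}.
\]
Finally I would use that $c=c_\psi$ has been chosen $\ll c_0$ (small relative to $c_0$ and $\|\psi\|_{C^3}$, say so that $(1+\tfrac12\|\psi''\|_{C^0})c\le c_0/3$ and $c\le c_0/3$) and that $L$ is large (so $L^{-1}\le c_0/3$) to conclude $d(x_2+\omega_3,C'_\psi)\ge c$, i.e. $(x_2+\omega_3,y_2)\in G$.

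The real content is concentrated in the Taylor estimate above; the remaining steps are two applications of the triangle inequality and a choice of constants. The only points requiring genuine care are keeping the $\pmod 1$ reductions consistent (using that the $\mathbb S^1$-distance is dominated by the $\mathbb R$-distance for the $f$-term) and the bookkeeping of $c$ against $c_0$ and $\|\psi\|_{C^3}$. The conceptual reason the argument works — and the reason $B$ is given radius $\sqrt{c/L}$ rather than $c$ — is exactly that this makes the $L$-amplification in $f=L\psi$ harmless on $B$: two consecutive visits to $\mathcal N_c(C'_\psi)$, the second deep inside (in $B$), force the next iterate out of $\mathcal N_c(C'_\psi)$ entirely, i.e. into $G$.
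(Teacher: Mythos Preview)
Your proof is correct and follows essentially the same approach as the paper's: both compare $x_2+\omega_3$ (the paper works with $x_2$, absorbing the harmless $\omega_3$ at the end) to $f(\hat x)-\hat x'$ for suitable $\hat x,\hat x'\in C'_\psi$, use the second-order Taylor bound $|f(x_1+\omega_2)-f(\hat x)|\le \tfrac12 L\|\psi''\|_{C^0}\,(c/L)$ to kill the $L$-amplification, and then invoke (H3)$(c_0)$ together with $c\ll c_0$. If anything, your bookkeeping of the perturbations $\omega_i$ is slightly tidier than the paper's, which chooses $\hat x_0,\hat x_1$ nearest to $x_0,x_1$ rather than to $x_0+\omega_1,x_1+\omega_2$ and compensates with a factor of $2$.
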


\begin{proof} Let $\hat x_0, \hat x_1 \in C'_\psi$ (possibly $\hat x_0 = \hat x_1$) 
 be such that $d(x_0,\hat x_0 )< c$ and $d(x_1,\hat x_1) < \sqrt{\frac{c}{L}}$.
Since $(f(\hat x_1) - \hat x_0) \mbox{ (mod 1) } \not \in \mathcal N_{c_0}( C'_\psi)$
by (H3), it suffices to show $|x_2 - (f(\hat x_1) - \hat x_0) \mbox{ (mod 1)}| \ll c_0$:
\begin{eqnarray*}
|x_2 - (f(\hat x_1) - \hat x_0) \mbox{ (mod 1)}| & = &
|(f(x_1 + \omega_2)-y_1) - (f(\hat x_1) - \hat x_0) \mbox{ (mod 1)}|\\
& \le & | f(x_1 + \omega_2) -f(\hat x_1)| + d(y_1, \hat x_0)\ .
\end{eqnarray*} 
To see that this is $\ll c_0$, observe that for large $L$, we have
$$
|f(x_1 + \omega_2) - f(\hat x_1)| < \frac12 L \|\psi''\| \left(\sqrt{\frac{c}{L}} + 
L^{-1} \right)^2 < \|\psi''\| c\ ,
$$
and $d(y_1, \hat x_0) = d(x_0+\omega_1, \hat x_0) < 2c$.
\end{proof}


Next we apply Lemma \ref{lem:badToGood} to put constraints on the set of all possible words $\bar W$ associated with $(x_0, y_0) \in G_N$.

\begin{lem}
Let $\bar W$ be associated with  $(x_0, y_0) \in G_N$. Then
$\bar W$ must have the following form:
\begin{align}\label{eq:wn}
\bar W = G^{k_M} \bar V_M G^{k_{M-1}} \bar V_{M-1}  \cdots G^{k_1} \bar 
V_1 G^{k_0} \, ,
\end{align}
where $M \geq 0$, $k_0, k_1, \cdots, k_M \geq 1$, and if $M > 0$, then each
 $\bar V_i$ is one of the words in
$$
{\mathcal V} \ = \ \{B, BB, \   \mbox{ or } \ B I^k B, I^k B, I^k, B I^k \ \mbox{ for some }k \geq 1 \}\ .
$$
\end{lem}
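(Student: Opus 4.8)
The plan is to translate the combinatorial constraints on admissible words directly from Lemma~\ref{lem:badToGood}. The key observation is that if $(x_0,y_0) \in G_N$, then since $\e < L^{-1}$ (as assumed in this part of Section~5) the hypothesis of Lemma~\ref{lem:badToGood} is applicable to every consecutive triple of iterates. What that lemma says, in the symbolic language, is precisely: \emph{the word $\bar W$ contains no subword of the form $B(B\cup I)$}, i.e.\ whenever $W_{i+1} = B$, we must have $W_i \ne B$ and $W_i \ne I$, so $W_i = G$. Equivalently, every occurrence of the letter $B$ in $\bar W$ is \emph{immediately preceded} (reading right to left, so in position $i-1$ relative to a $B$ in position $i$) by $G$, with the sole possible exception being a $B$ sitting at the far right end (position $0$) of the word, where there is no earlier letter to constrain.

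First I would make this precise: define a \emph{maximal non-$G$ block} of $\bar W$ to be a maximal run of consecutive letters none of which is $G$ (so each is $B$ or $I$). Any word over $\{B,I,G\}$ decomposes uniquely as alternating $G$-runs and maximal non-$G$ blocks; writing the $G$-runs as $G^{k_0}, \dots, G^{k_M}$ (from the right) and the intervening non-$G$ blocks as $\bar V_1, \dots, \bar V_M$ gives exactly the shape \eqref{eq:wn}, where $k_1,\dots,k_{M-1} \ge 1$ automatically and $k_0, k_M \ge 0$ a priori. The content of the lemma is then two-fold: (i) identifying which words $\bar V$ can appear as a maximal non-$G$ block, and (ii) showing $k_0 \ge 1$ and $k_M \ge 1$.

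For (i): within a maximal non-$G$ block $\bar V = W_j W_{j-1} \cdots W_{j-\ell+1}$, every letter is $B$ or $I$, and by the no-$B(B\cup I)$ rule, any $B$ in an \emph{interior or right} position of $\bar V$ would force the letter to its right (still inside $\bar V$, since that right neighbour is non-$G$ by... wait — it need not be inside $\bar V$ if the $B$ is at the rightmost position of $\bar V$) to be $G$, a contradiction unless that $B$ is at the rightmost slot of $\bar V$. Hence a $B$ can occur in $\bar V$ only at the leftmost position or the rightmost position of $\bar V$. Therefore $\bar V$ is a (possibly empty) run of $I$'s with at most one $B$ prepended on the left and at most one $B$ appended on the right: that is exactly $\bar V \in \{B, BB, BI^kB, I^kB, I^k, BI^k : k \ge 1\}$, matching $\mathcal V$. (Note $BB$ arises as the $k=0$ degenerate case of $BI^kB$; the list in $\mathcal V$ handles it separately, which is fine. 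One should also double-check that $\bar V$ nonempty means $\mathcal V$ need not include the empty word, consistent with the statement's ``if $M>0$, then each $\bar V_i$'' clause.)

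For (ii) — and this is the only genuinely delicate point — I must rule out $k_0 = 0$ and $k_M = 0$, i.e.\ that $\bar W$ begins or ends with a non-$G$ block. This is exactly where condition (b) in the definition of $G_N$ enters: (b) demands $d(x_0 + \omega_1, C_\psi') \ge p/(16M_1)$ and $d(x_{N-1}+\omega_N, C_\psi') \ge p/(16M_1)$, and since $c \le p/(16M_1)$ by the stipulation at the start of Section~5.2, the points $(x_0+\omega_1, y_0)$ and $(x_{N-1}+\omega_N, y_{N-1})$ lie outside $\mathcal N_c(C_\psi') \times \mathbb S^1 = B \cup I$, hence in $G$. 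Thus $W_0 = G$ and $W_{N-1} = G$, forcing $k_0 \ge 1$ and $k_M \ge 1$. I expect step (i) — the careful case analysis showing a $B$ can only sit at the two ends of a non-$G$ block — to be the main thing to get right, since one must track the right-to-left reading convention carefully and handle the boundary of each block correctly; everything else is bookkeeping on the decomposition into alternating runs plus a direct appeal to condition (b).
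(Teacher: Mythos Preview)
Your overall strategy---decompose $\bar W$ into maximal non-$G$ blocks separated by $G$-runs, then (i) classify the possible non-$G$ blocks and (ii) use condition (b) in the definition of $G_N$ together with $c \le p/(16M_1)$ to force $W_0 = W_{N-1} = G$---is exactly the paper's approach, and your treatment of (ii) is correct.

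However, your execution of (i) contains a genuine error: you misstate what Lemma~\ref{lem:badToGood} gives. The lemma says that if $W_i \in \{B,I\}$ and $W_{i+1} = B$, then $W_{i+2} = G$; it is a three-step constraint, not the two-step rule ``$W_{i+1} = B \Rightarrow W_i = G$'' that you write. Your version forbids the two-letter subwords $BB$ and $BI$ outright, which immediately contradicts $BB \in \mathcal V$. Moreover, your stated rule does not actually yield your conclusion: under ``$B$ forces $G$ to its immediate right'', a $B$ at the \emph{leftmost} position of a block of length $\ge 2$ would also be forbidden (its right neighbour is inside the block, hence non-$G$), so you would only get $B$ at the rightmost end, not both ends.

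The correct translation of Lemma~\ref{lem:badToGood} is that the forbidden three-letter subwords are exactly those of the form $(B\cup I)\,B\,(B\cup I)$; equivalently, $B$ cannot be sandwiched between two non-$G$ letters. From this it follows directly that inside a maximal non-$G$ block $\bar V$, a $B$ cannot be interior: if $B$ has a non-$G$ to its right (i.e.\ it is not at the rightmost slot), then the letter to its \emph{left} must be $G$, forcing it to be at the leftmost slot. This is what the paper records as the three constraints ``$BB$ only in $GBBG$'', ``$BI$ only in $GBI$'', ``$IB$ only in $IBG$''. With this correction your argument goes through and coincides with the paper's.
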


\begin{proof}
The sequence $\bar W$ starts and ends with $G$ by the definition of $G_N$ and the stipulation that $c \leq p / (16 M_1)$; thus a decomposition of the form \eqref{eq:wn} is obtained with words $\{\bar V_i\}_{i = 1}^M$ formed from the letters $\{I, B\}$. To show that the words $\{\bar V_i\}_{i = 1}^M$ must be of the proscribed form, observe that
\begin{itemize}
\item $BB$ occurs only as a subword of $GBBG$;
\item  $BI$ only occurs as a subword of $GBI$;
\item  $IB$ only occurs as a subword of $IBG$.
\end{itemize}
Each of these constraints follows from Lemma \ref{lem:badToGood}; for the third,
$G$ is the only letter that can precede $IB$. It follows from the last two bullets that all 
the $I$s must be consecutive, and $B$ can appear at most twice.
\end{proof}

With respect to the representation in (\ref{eq:wn}), we view each $\bar V_i$ as representing
an excursion away from the ``good region" $G$. In what follows, 
we will show that $G_N$ and (H3) are chosen so that
for $(x_0,y_0) \in G_N$, vectors are not rotated by too much during these excursions, 
and hyperbolicity is restored with each visit to $G$. 
To prove this, we introduce the following cones in tangent space:
\[
{\mathcal C}_n = {\mathcal C}(L^{-1 + \b/4}) \, , \qquad {\mathcal C}_1 = {\mathcal C}(1) \, , \quad \text{and } \quad {\mathcal C}_w = {\mathcal C}(L^{1 - \b/4}) \, ,
\]
where ${\mathcal C}(s)$ refers to the cone of vectors whose slopes have absolute value $\le s$. The letters $n, w$ stand for `narrow' and `wide', respectively.

Let $(x_0,y_0) \in G_N$ and suppose for some $m$ and $l$, 
$\{(x_{m+i-1} + \omega_{m+i}, y_{m+i-1})\}_{i=1}^l$
corresponds to the word $\bar V = V_l \cdots V_1 \in \mathcal V$. 
To simplify notation, we write 
$$
(\tilde x_i, \tilde y_i)= (x_{m+i-1} + \omega_{m+i}, y_{m+i-1}) \qquad \mbox{and} \qquad
d{\tilde F}^l = dF_{(\tilde x_l, \tilde y_l)} \circ \cdots \circ dF_{(\tilde x_1, \tilde y_1)}\ .
$$

\begin{prop}\label{lem:allWords}
Let $\{(\tilde x_i, \tilde y_i)\}_{i = 1}^l $ and $\bar V = V_l \cdots V_1\in \mathcal V$
be as above. Then
$$
d\tilde F^l ({\mathcal C}_n)  \subset {\mathcal C}_w\ ,
\quad \big(d\tilde F^l) ^* ({\mathcal C}_n)  \subset {\mathcal C}_w\ , \quad
\mbox{and} \quad
\min_{u \in {\mathcal C}_n,  \|u \| = 1} \| d\tilde F^l u\| \ \geq \ 
\frac12 L^{\frac{\b}{5} n_I (\bar V)}
$$
where $n_I(\bar V)$ is the number of appearances of the letter $I$ in the word $\bar V$. 
\end{prop}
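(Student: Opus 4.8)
The plan is to verify the three claimed properties by a case-by-case analysis over the finitely many word shapes in $\mathcal V$, the key tool in every case being a careful tracking of how $dF_{(x,y)}$ acts on the slope of a tangent vector. Recall that in slope coordinates, if $v = (v_x, v_y)$ has slope $s = v_y/v_x$, then $dF_{(x,y)} v$ has slope $1/(f'(x) - s)$, and $\| dF_{(x,y)} v\| \geq |v_x| \cdot |f'(x) - s|$ when $|f'(x) - s|$ is large. I would first record two elementary mechanisms. (1) \emph{Good steps contract slopes and expand norms}: if $(x,y) \in G$ then $|f'(x)| \geq L^{\beta/2}$ (in fact $\gtrsim L^{1/2}$ on $G$, since $d(x,C_\psi') \geq c$), so any vector of slope in $\mathcal C_w$ is mapped to a vector of slope $\lesssim L^{-1+\beta/4}$, i.e.\ into $\mathcal C_n$, while its horizontal component is expanded by a factor $\gtrsim L^{\beta/2}$. (2) \emph{Bad/intermediate steps do not ruin slopes too badly}: on $B$ we have only $|f'| \geq L^{\beta/2}$-type lower bounds failing, but the word constraints from Lemma~\ref{lem:badToGood} and the quadratic constraint (a)(ii) in the definition of $G_N$ control the \emph{products} of $d(x_i+\omega_{i+1}, C_\psi')$, hence of $|f'(\tilde x_i)|$, over consecutive bad steps; meanwhile a single step starting from a slope in $\mathcal C_n$ lands in $\mathcal C_w$ for free because $1/(f'(\tilde x_1) - s)$ has absolute value $\leq 1/(|f'(\tilde x_1)| - L^{-1+\beta/4})$, and on $I$, $|f'| \gtrsim \sqrt{L}$ handles the $I$-runs with room to spare.

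Next I would dispatch the cases of $\mathcal V$ in increasing order of difficulty. For $\bar V = I^k$: each of the $k$ steps starts (after the first) with slope already in $\mathcal C_n \subset \mathcal C_1$ and $|f'| \gtrsim \sqrt L$, so slopes stay in $\mathcal C_n$ throughout, the first step maps $\mathcal C_n$ into $\mathcal C_n \subset \mathcal C_w$, and the horizontal component is multiplied by at least $(\tfrac12 \sqrt L)^k \geq \tfrac12 L^{\beta k/5}$ since $n_I(\bar V) = k$; this also gives the norm bound after noting that in $\mathcal C_n$ the norm and the horizontal component are comparable. The words $I^k B$, $B I^k$, $B I^k B$ are handled by inserting one or two $B$-steps; the worrying point is the $B$-step, where $|f'|$ may be as small as $L^{-1+\beta}$. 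Here is where (a)(ii) and $\e < L^{-1}$ enter: a $B$-step is always flanked (by the word structure) by either a $G$-step or an $I$-step, and (a)(ii) forces $d(\tilde x_i, C'_\psi)\, d(\tilde x_{i+1}, C'_\psi) \gtrsim L^{-2+\beta/2}$, so $|f'(\tilde x_i) f'(\tilde x_{i+1})| \gtrsim L^{\beta/2}$; thus the pair of steps still expands (by $\gtrsim L^{\beta/4}$, say), and one checks the slope does not escape $\mathcal C_w$ during the interior $B$-step because it enters that step with slope in $\mathcal C_n$, landing with slope $\leq 1/(|f'| - L^{-1+\beta/4})$, which even for $|f'|$ as small as $\sim L^{-1+\beta}$ stays bounded by $\lesssim L^{1-\beta}\le L^{1-\beta/4}$. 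The worst cases are $\bar V = B$ and $\bar V = BB$ (with $n_I = 0$), for which the claimed norm bound is only $\geq \tfrac12$, i.e.\ no expansion is required—one needs only that slopes stay inside $\mathcal C_w$ and that $\|d\tilde F^l u\| \geq \tfrac12\|u\|$; this follows since a single or double $B$-step starting from $\mathcal C_n$ produces slope in $\mathcal C_w$ and, using (a)(i) giving $|f'| \geq L^{-1+\beta}$ together with the slope being in $\mathcal C_n$ at the start, the horizontal component is not contracted below a factor $\sim L^{-1+\beta} \geq \tfrac12$ — and for $BB$, (a)(ii) upgrades the two-step product to $\gtrsim L^{\beta/2} \geq \tfrac12$. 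The adjoint statement $(d\tilde F^l)^* (\mathcal C_n) \subset \mathcal C_w$ follows by the same computation applied to $(dF_{(\tilde x_i,\tilde y_i)})^*$, whose action on slopes is $s \mapsto f'(\tilde x_i) - 1/s$ — structurally the same, with $f'$ replaced by its reciprocal role — or, more cleanly, by observing that $(dF)^*$ has the same singular values and transposed singular directions as $dF$, and the cone inclusions are symmetric under this.

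I expect the main obstacle to be the bookkeeping in the mixed words $B I^k B$: one must simultaneously (i) show the slope never leaves $\mathcal C_w$ at any intermediate step, which requires knowing the slope is in $\mathcal C_n$ \emph{before} each $B$-step, and this in turn relies on the preceding $I$- or $G$-step having contracted it there; and (ii) assemble the norm lower bound $\tfrac12 L^{\beta n_I/5}$ while absorbing the two potentially tiny $B$-steps into the expansion gained from the $k$ $I$-steps and from the quadratic bound (a)(ii) at the $B\!-\!I$ junctions. The exponent $\beta/5$ (rather than the naive $\beta/2$) is precisely the slack left over after paying for these $B$-steps and for the constant-factor losses $\tfrac12$ per step; getting the arithmetic of exponents to close with a clean constant is the delicate part, but it is routine once the slope-tracking lemmas (1) and (2) above are stated with explicit constants. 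Everything else is a finite check over the list $\mathcal V$.
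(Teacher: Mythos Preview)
Your approach is essentially the paper's: a case-by-case analysis over the word shapes in $\mathcal V$, using the slope recursion $s \mapsto 1/(f'(\tilde x_i) - s)$ together with the product bound $|f'(\tilde x_i)f'(\tilde x_{i+1})| \geq L^{\beta/2}$ from condition (a)(ii). The paper organizes this into building-block lemmas for the subwords $I$, $B$, $BB$, $BI$, $IB$, $BIB$ and then concatenates, whereas you go directly by word type, but the content is the same.

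There is, however, a concrete error in your treatment of $\bar V = B$. You write that ``the horizontal component is not contracted below a factor $\sim L^{-1+\beta} \geq \tfrac12$,'' but $L^{-1+\beta} \to 0$ as $L \to \infty$ for any $\beta < 1$, so this inequality is false. The horizontal component of $d\tilde F u$ can indeed be tiny on $B$. The correct (and simpler) argument is that the \emph{vertical} component of $dF_{(x,y)}(v_x,v_y) = (f'(x)v_x - v_y,\, v_x)$ is $v_x$, and for $u_{\theta_0} \in \mathcal C_n$ one has $|v_x| = |\cos\theta_0| \geq (1 + L^{-2+\beta/2})^{-1/2}$, which is certainly $\geq \tfrac12$.

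A second, smaller imprecision: you assert that before each $B$-step the slope must lie in $\mathcal C_n$. For the leftmost $B$ in $BI^k$ or $BI^kB$ this is not true---after the $I$-run the slope is only guaranteed to be in $\mathcal C_1$, since a single $I$-step maps $\mathcal C_1$ into $\mathcal C_1$ but not into $\mathcal C_n$. The paper handles this by treating the pair $BI$ as a unit and applying (a)(ii) to that pair (and treats $BIB$ separately, splitting on whether $|f'(\tilde x_1)| \gtrless |f'(\tilde x_3)|$). Your step-by-step tracking would still go through, but you should route the argument through $\mathcal C_1$ rather than $\mathcal C_n$ at that juncture.
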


We defer the proof of Proposition \ref{lem:allWords} to the next subsection.

\begin{proof}[Proof of Proposition \ref{prop:coneEstimate2} assuming Proposition \ref{lem:allWords}]
For $(x_0, y_0) \in G_N$, let $\bar W$ be as in \eqref{eq:wn}. It is easy to check
that if $(x_{m} + \omega_{m + 1}, y_{m}) \in G$, then
\begin{equation} \label{hypbound}
(dF_{\omega_{m + 1}})_{(x_{m} ,y_{m})}  ({\mathcal C}_w) \subset {\mathcal C}_n
\quad \mbox{with} \quad
\min_{u \in \Cc_w, \|u\| = 1} \|(dF_{\omega_{m + 1}})_{(x_{m} ,y_{m})}  u\| \geq \frac14 L^{\b/4} \, .
\end{equation}
Applying (\ref{hypbound}) and Proposition \ref{lem:allWords} alternately, we obtain
\[
(dF^N_{\uo})_{(x_0, y_0)} (\Cc_w) \subset \Cc_n \, .
\]
Identical considerations for the adjoint yield the cones relation $(dF^N_{\uo})_{(x_0, y_0)}^* \Cc_w \subset \Cc_n$. We now use the following elementary fact from linear algebra: if $M$ is a $2 \times 2$ real matrix with distinct real eigenvalues $\eta_1 >  \eta_2$ and corresponding eigenvectors $v_1, v_2 \in \R^2$, and if ${\mathcal C}$ is any closed convex cone with nonempty interior for which $M {\mathcal C} \subset {\mathcal C}$, then $v_1 \in {\mathcal C}$. 

We therefore conclude that the maximal expanding direction $\vartheta_0^+$ for $(dF^N_{\uo})_{(x_0, y_0)}$ and its image $\vartheta_N^+$ both belong to ${\mathcal C}_n$. The estimates for $\vartheta_0^-, \vartheta_N^-$ now follow on recalling that $\vartheta_0^- = \vartheta_0^+ + \pi/2 \, (\text{mod } \pi), \vartheta_N^- = \vartheta_N^+ + \pi/2 \, (\text{mod } \pi)$.

It remains to compute $\sigma_1 \big( (dF^N_{\uo})_{(x_0, y_0)} \big)$. From 
(\ref{hypbound}) and the derivative bound in Proposition \ref{lem:allWords} gives
\begin{align*}
\min_{u \in \Cc_w, \|u\| = 1} \|(dF^N_{\uo})_{(x_0, y_0)} u\| \geq L^{\frac{\b}{5} \big( (k_0 - 1) + (k_1 - 1) + \cdots + (k_{M-1} - 1) + k_M + \sum_{i = 1}^M (n_I(\bar V_i) + 1) \big) }
\end{align*}
As there cannot be more than two copies of $B$ in each $\bar V \in \mathcal V$, 
we have
\[
\frac{n_I(\bar V) + 1}{\Len(\bar V) + 1} \geq \frac13 \, ,
\]
and the asserted bound follows.
\end{proof}

\subsection{Proof of Proposition \ref{lem:allWords}}

Cones relations for adjoints are identical to those of the original, and so are omitted: hereafter we work exclusively with the original (unadjointed) derivatives. We will continue to use the notation in Proposition \ref{lem:allWords}. Additionally, 
 in each of the assertions below, if $d\tilde F^l$ is applied to the cone $\Cc$, then $\min$ refers to the minimum
taken over all unit vectors $u \in \Cc$.


The proof consists of enumerating all cases of $\bar V \in \mathcal V$.
We group the estimates as follows:

\begin{lem}\label{lem:oneLetter}
\ 
\begin{itemize}
\item[(a)] For $\bar V = I$: \quad 
$d\tilde F (\Cc_1) \subset \Cc_1$ \quad and \quad $ 
\min \| d\tilde F u\| \geq \frac12 K_1 \sqrt c \sqrt L \gg L^{1/4}$.
\item[(b)] For $\bar V = B$: \quad 
$d\tilde F (\Cc_n) \subset \Cc_w$ \quad and \quad $ 
\min \| d\tilde F u\| \geq \frac12$.
\end{itemize}
\end{lem}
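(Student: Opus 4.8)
\textbf{Proof plan for Lemma \ref{lem:oneLetter}.}

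The plan is to compute, in each of the two cases, the action of a single derivative matrix $dF_{(\tilde x_1,\tilde y_1)}$ on the relevant cone, using the explicit form
$$
dF_{(x,y)} = \begin{pmatrix} f'(x) & -1 \\ 1 & 0 \end{pmatrix},
$$
so that a vector of slope $s$ (i.e.\ $(1,s)$ up to scaling) is sent to $(f'(x)-s,\,1)$, a vector of slope $1/(f'(x)-s)$, and its norm is multiplied by $\sqrt{1+s^2}\,|f'(x)-s|/\sqrt{1+(f'(x)-s)^{-2}}$, which simplifies to $\sqrt{(1+s^2)((f'(x)-s)^2+1)}/\sqrt{1+s^2}$... more usefully, $\|dF_{(x,y)}(1,s)\| = \sqrt{(f'(x)-s)^2+1}$ while $\|(1,s)\| = \sqrt{1+s^2}$. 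The key input is the size of $|f'(\tilde x_1)|$ on the two types of region. For $\bar V = I$ we have $(\tilde x_1,\tilde y_1)\in I = \mathcal N_c(C'_\psi)\times\mathbb S^1 \setminus B$, so $\sqrt{c/L} \le d(\tilde x_1, C'_\psi) \le c$; combined with $|\psi'(x)| \ge K_1^{-1} d(x,C'_\psi)$ (and the matching upper bound $|\psi'(x)| \le \|\psi''\| d(x,C'_\psi)$ valid near $C'_\psi$, using (H1),(H2)), this gives $K_1^{-1}\sqrt{cL} \lesssim |f'(\tilde x_1)| \lesssim \|\psi''\| cL$. For $\bar V = B$ we only know $d(\tilde x_1,C'_\psi) < \sqrt{c/L}$, hence $|f'(\tilde x_1)| \le \|\psi''\| c L \cdot \sqrt{c/L}=\|\psi''\|\sqrt c\sqrt L$, i.e.\ $|f'(\tilde x_1)|$ can be as small as $0$ but is at most $O(\sqrt L)$.

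For part (a), on $\Cc_1$ every slope $s$ satisfies $|s|\le 1$, while $|f'(\tilde x_1)| \gtrsim K_1^{-1}\sqrt{cL} \gg 1$ for $L$ large, so $|f'(\tilde x_1)-s| \ge |f'(\tilde x_1)| - 1$ is large; the image slope is $1/(f'(\tilde x_1)-s)$, which has absolute value $\ll 1$, giving $d\tilde F(\Cc_1)\subset\Cc_1$ comfortably. For the norm bound, $\|d\tilde F u\| = \|u\|\cdot\sqrt{(f'(\tilde x_1)-s)^2+1}/\sqrt{1+s^2} \ge \sqrt{(f'(\tilde x_1)-s)^2+1}/\sqrt 2 \ge (|f'(\tilde x_1)|-1)/\sqrt 2 \ge \tfrac12 K_1^{-1}\sqrt c\sqrt L$ for $L$ large (absorbing the $-1$ and the $\sqrt 2$ into the constant; note the statement is written with $K_1$ rather than $K_1^{-1}$, which one should double-check against the normalization of $K_1$ in the text, but the mechanism is the same), and this is $\gg L^{1/4}$. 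For part (b), on $\Cc_n$ slopes satisfy $|s|\le L^{-1+\b/4}$, which is tiny; then $|f'(\tilde x_1)-s| \le |f'(\tilde x_1)| + L^{-1+\b/4} \le \|\psi''\|\sqrt c\sqrt L + 1 \lesssim \sqrt L$, so the image slope has absolute value $\ge 1/(\|\psi''\|\sqrt c \sqrt L + 1) $, which is not directly bounded below --- but we need an \emph{upper} bound on the image slope to land in $\Cc_w = \Cc(L^{1-\b/4})$; that holds vacuously once we observe the image slope is $1/(f'(\tilde x_1)-s)$ and, unless $f'(\tilde x_1)-s$ is extremely close to $0$, this is bounded. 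The genuinely delicate point is when $f'(\tilde x_1)-s \approx 0$: then $\|d\tilde F u\|$ is small (only the $+1$ under the square root survives, giving $\ge 1/\sqrt{1+s^2} \ge \tfrac12$ since $|s|\le L^{-1+\b/4} < 1$) \emph{and} the image slope blows up. We must check the image slope still lies in $\Cc_w$: here one uses that $u \in \Cc_n$ so $\|u\| \ge$ a definite fraction, $\|d\tilde F u\| \ge \tfrac12$, and the image is $(f'(\tilde x_1)-s,1)$ up to the scaling of $u$, whose slope is $1/(f'(\tilde x_1)-s)$ — to keep this $\le L^{1-\b/4}$ we need $|f'(\tilde x_1)-s| \ge L^{-1+\b/4}$. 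This can \emph{fail}; the resolution is that it fails only on a set already excluded (or handled via a subsequent letter), OR — more likely given the structure — that the bound $d\tilde F(\Cc_n)\subset\Cc_w$ is exactly chosen so that a single $B$-step cannot push a narrow vector past width $L^{1-\b/4}$, because the slope after one step is $1/(f'(\tilde x_1)-s)$ and $|f'(\tilde x_1)-s|$, being a value of $f'$ at a point within $\sqrt{c/L}$ of $C'_\psi$ perturbed by $|s|\le L^{-1+\b/4}$, is bounded below by... nothing a priori. So the honest statement is: if $|f'(\tilde x_1)-s| \ge L^{-1+\b/4}$ the image slope is $\le L^{1-\b/4}$ and we are done; the complementary case, where $f'(\tilde x_1) \approx s \approx 0$, forces $\tilde x_1$ to be within $O(L^{-1+\b/4}\cdot K_1)$ of $C'_\psi$, i.e.\ essentially inside $B$ at a scale even finer than $\sqrt{c/L}$ — and one should note this still lands in $\Cc_w$ provided we allow the constant, or re-examine whether the definition of $\Cc_w$ and the word constraints ($B$ followed by $I$ or $G$) prevent iterating this bad case. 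The norm bound $\min\|d\tilde Fu\| \ge \tfrac12$ is robust: $\|d\tilde F u\|/\|u\| = \sqrt{(f'(\tilde x_1)-s)^2+1}/\sqrt{1+s^2} \ge 1/\sqrt{1+L^{-2+\b/2}} \ge \tfrac12$.

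The main obstacle is precisely the borderline sub-case in part (b) where $f'$ at the $B$-point nearly cancels the incoming slope, making the derivative nearly singular: one has to argue that even then the image cone relation $d\tilde F(\Cc_n)\subset\Cc_w$ holds — presumably because the width $L^{1-\b/4}$ was reverse-engineered to accommodate exactly one such step, with the word combinatorics of $\mathcal V$ (no $BB$ except inside $GBBG$, etc., from the previous lemma) ensuring the next letter restores narrowness before a second near-cancellation can occur. Everything else is the two-line computation above with the explicit matrix $\left(\begin{smallmatrix} f'(x) & -1 \\ 1 & 0\end{smallmatrix}\right)$ and the derivative size estimates on $I$ and $B$ coming from (H1), (H2) and the definitions of $B$ and $I$.
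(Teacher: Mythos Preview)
Your approach is the same as the paper's (which simply records the formulae $\tan\theta_1 = 1/(f'(\tilde x_1)-\tan\theta_0)$ and $\|d\tilde F u_\theta\| = \sqrt{(f'(\tilde x_1)\cos\theta_0-\sin\theta_0)^2+\cos^2\theta_0}$ and leaves the rest to the reader), and part (a) as well as the norm bound in (b) are fine. Your observation about $K_1$ versus $K_1^{-1}$ is also on target.

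However, the ``main obstacle'' you flag in part (b) is not an obstacle at all, and your discussion of it is where the argument goes wrong. You write that on $B$ the derivative $|f'(\tilde x_1)|$ ``can be as small as $0$'', and you then worry about the case $f'(\tilde x_1)\approx s\approx 0$. But recall the standing hypothesis of Proposition \ref{lem:allWords}: the trajectory in question comes from $(x_0,y_0)\in G_N$. Condition (a)(i) in the definition of $G_N$ forces $d(\tilde x_1,C'_\psi)\ge K_1 L^{-2+\b}$, hence (by the defining property of $K_1$) $|f'(\tilde x_1)|\ge L^{-1+\b}$ --- the paper even records this bound explicitly right after defining $G_N$. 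Since $u\in\Cc_n$ means $|\tan\theta_0|\le L^{-1+\b/4}$ and $L^{-1+\b}\gg L^{-1+\b/4}$, we get
\[
|f'(\tilde x_1)-\tan\theta_0|\ \ge\ L^{-1+\b}-L^{-1+\b/4}\ \ge\ \tfrac12 L^{-1+\b},
\]
so $|\tan\theta_1|\le 2L^{1-\b}\ll L^{1-\b/4}$ and $u_{\theta_1}\in\Cc_w$. No appeal to the word combinatorics or to what happens at the next letter is needed; the cone inclusion in (b) is a one-line consequence of the lower bound on $|f'|$ that you overlooked.
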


The next group consists of two-letter words the treatment of which will rely on 
condition (a)(ii) in the definition of $G_N$. 

\begin{lem}\label{lem:twoLetter}
\ 
\begin{itemize}
\item[(c)] For $\bar V = BB$: \quad
$d\tilde F^2(\Cc_n) \subset \Cc_w$ \quad and \quad
$\min \|d\tilde F^2u \| \geq L^{\b/3} $.
\item[(d)] For $\bar V = BI$: \quad 
$d\tilde F^2(\Cc_1) \subset \Cc_w$ \quad and \quad 
$\min \|d\tilde F^2 u\| \geq \min\{\frac12 K_1 \sqrt c \sqrt L , L^{\b/3}\} \geq L^{\b/5}$.
\item[(e)] For $\bar V = IB$: \quad 
$d \tilde F^2 (\Cc_n) \subset \Cc_1$ \quad and \quad 
$\min \|d\tilde F^2u \| \geq L^{\b/3} $.
\end{itemize}
\end{lem}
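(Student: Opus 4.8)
The plan is to prove Lemma \ref{lem:twoLetter} by direct computation, working in the coordinate chart on $\P\T^2$ where projective points are recorded by slopes $s = \tan\theta$, so that the action of $dF_{(x,y)}$ on slopes is the M\"obius-type map $s \mapsto (f'(x) - s)^{-1} \cdot (\text{something})$; more precisely, recalling from \eqref{Fhat} that $k_{i+1} = (f'(\tilde y_{i+1}) - k_i)^{-1}$ where $k_i = \tan\theta_i$, one sees that a vector of slope $s$ at $(\tilde x_i, \tilde y_i)$ is sent by $dF_{(\tilde x_i,\tilde y_i)}$ to a vector of slope $(f'(\tilde x_i) - s)^{-1}$, and that the norm is multiplied by a factor of order $|f'(\tilde x_i) - s|$ (up to bounded factors coming from $\sin$ and $\cos$ of the angles). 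Throughout, write $\d_i = d(\tilde x_i, C'_\psi)$, so that by the choice of $K_1$ we have $K_1^{-1}\d_i \le |f'(\tilde x_i)| \le \|\psi''\| L\, \d_i$, and observe that the letter $B$ corresponds to $\d_i < \sqrt{c/L}$, the letter $I$ to $\sqrt{c/L} \le \d_i < c$, and the relevant entries of $G_N$-condition (a)(ii) give $\d_i \d_{i+1} \ge K_1^2 L^{-2+\b/2}$.

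First I would handle case (e), $\bar V = IB$: a vector in $\Cc_n$ has slope $|s| \le L^{-1+\b/4}$, so after the first ($I$) step its image has slope $(f'(\tilde x_1) - s)^{-1}$; since $|f'(\tilde x_1)| \le \|\psi''\|L\d_1 \le \|\psi''\| c\, \sqrt{L/c}\cdot\sqrt{c/L}$... more simply $|f'(\tilde x_1)|$ is at most of order $L \cdot c$ and at least of order $\sqrt{c L}$ (from $\d_1 \ge \sqrt{c/L}$), while $|s|$ is negligible, so the image slope has absolute value between roughly $(L c)^{-1}$ and $(c L)^{-1/2}$, in particular lies well inside $\Cc_1$; the norm is multiplied by $|f'(\tilde x_1) - s| \gtrsim \frac12 K_1\sqrt c \sqrt L$, matching part (a). Now apply the second ($B$) step to this slope-$O((cL)^{-1/2})$ vector: its image slope is $(f'(\tilde x_2) - s_1)^{-1}$ where $|s_1| \lesssim (cL)^{-1/2}$ and $|f'(\tilde x_2)| \le \|\psi''\|L\d_2$; using $\d_2 \ge K_1^2 L^{-2+\b/2}/\d_1$ and $\d_1 < c$ one gets $|f'(\tilde x_2) - s_1|$ bounded below by a quantity of order $L^{-1+\b/2}/\d_1 \ge L^{\b/2}/(cL)$... the cleanest route is to bound the \emph{product} of the two norm-expansion factors: it is $\gtrsim |f'(\tilde x_1)|\cdot|f'(\tilde x_2) - s_1|$, and the cross term $|f'(\tilde x_1)f'(\tilde x_2)| \gtrsim K_1^{-2}\d_1\d_2 L^2 \cdot$ wait — rather $|f'(\tilde x_1) f'(\tilde x_2)| \geq (K_1^{-1})^2 \cdot$ no: $|f'| \ge K_1^{-1}\d$ gives $|f'(\tilde x_1)f'(\tilde x_2)| \ge K_1^{-2}\d_1\d_2 \cdot L^2$? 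That is not right either since $f = L\psi + a$ so $f' = L\psi'$ and $|f'(\tilde x_i)| = L|\psi'(\tilde x_i)| \ge L K_1^{-1}\d_i$; hence $|f'(\tilde x_1)f'(\tilde x_2)| \ge L^2 K_1^{-2}\d_1\d_2 \ge L^2 K_1^{-2}\cdot K_1^2 L^{-2+\b/2} = L^{\b/2}$, and after absorbing the bounded angular factors and the comparison between $|f'(\tilde x_2)|$ and $|f'(\tilde x_2)-s_1|$ (legitimate since $|s_1|$ is dominated by $|f'(\tilde x_2)|$ except in a harmless borderline case handled by the single-step bound of part (b)) one gets $\min\|d\tilde F^2 u\| \ge L^{\b/3}$ for $L$ large. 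The cone inclusion $\Cc_n \to \Cc_1$ follows by composing the slope maps as above. Case (c), $\bar V = BB$, is essentially identical: both steps use only (a)(ii), the product of expansion factors is again $\gtrsim L^{\b/2}$, and the cone goes $\Cc_n \to \Cc_w$ because a $B$ step can widen slopes (first step: narrow to something of size at most $(f'(\tilde x_1))^{-1}$, which however could be as large as $(\sqrt{cL})^{-1}$, still inside $\Cc_1$; second step output slope $\lesssim |f'(\tilde x_2)|^{-1} + |s_1| \lesssim \sqrt{L/c}$, inside $\Cc_w$ once $c$ is small). Case (d), $\bar V = BI$, combines part (b) for the first step (slope $\Cc_1 \to \Cc_w$, norm factor $\ge \frac12$) with part (a) for the second step, \emph{but} one must check that a $\Cc_w$ vector (slope up to $L^{1-\b/4}$) entering the $I$ step still lands in $\Cc_w$ and expands by $\gtrsim \min\{\frac12 K_1\sqrt c\sqrt L, L^{\b/3}\}$: the subtlety is that a wide incoming slope $s_1$ could be close to $f'(\tilde x_2)$, killing expansion, so here one genuinely needs condition (a)(ii) to control $\d_2$ from below in terms of $\d_1$ and hence force $|f'(\tilde x_2) - s_1|$ away from zero; tracking this gives the stated $L^{\b/5}$ bound.

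The main obstacle I anticipate is precisely the bookkeeping around near-cancellation in the slope map $s \mapsto (f'(\tilde x) - s)^{-1}$: whenever the incoming slope $s$ is comparable to $f'(\tilde x)$ the single-step expansion factor $|f'(\tilde x) - s|$ degenerates, and it is exactly conditions (a)(i)–(ii) of $G_N$ — the lower bound on $\d_i$ and on the \emph{product} $\d_i\d_{i+1}$ — that are calibrated to rule this out across a two-letter excursion. Making the constants line up (the exponents $\b/2 \to \b/3 \to \b/5$, each loss absorbing the accumulated bounded angular/geometric factors $\sin\theta$, $\cos\theta$, $K_1$, $\|\psi''\|$, $\sqrt c$ for $L$ large) is routine but must be done carefully; I would organize it by first recording a single master estimate for one step — "if the incoming slope lies in $\Cc(s_{\mathrm{in}})$ with $s_{\mathrm{in}} \ll |f'(\tilde x)|$ then the outgoing slope lies in $\Cc(2|f'(\tilde x)|^{-1})$ and the norm expands by $\ge \frac12|f'(\tilde x)|$" together with a dual version for the borderline regime — and then feeding it mechanically through the five words $I, B, BB, BI, IB$. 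Cases (a) and (b), being one-letter, follow immediately from the master estimate (with $|f'|$ of order between $\sqrt{cL}$ and $cL$ for $I$, and $|f'| \le \sqrt{cL}$ for $B$), completing Lemmas \ref{lem:oneLetter} and \ref{lem:twoLetter}; the remaining words in $\mathcal V$, namely $I^k B$, $B I^k$, $B I^k B$ and plain $I^k$, are then assembled from these building blocks in the proof of Proposition \ref{lem:allWords}.
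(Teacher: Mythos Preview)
Your overall strategy --- working in slope coordinates and leveraging condition (a)(ii) of $G_N$ to get $|f'(\tilde x_1) f'(\tilde x_2)| \ge L^{\b/2}$ --- is exactly right and matches the paper. But there is a genuine error that breaks cases (d) and (e): you have read the words left-to-right. The paper's convention (stated explicitly just before Lemma~\ref{lem:badToGood}) is that symbolic sequences are read \emph{right to left}: $\bar V = V_l \cdots V_1$ with $(\tilde x_i, \tilde y_i) \in V_i$, so $\bar V = IB$ means the \emph{first} step is $B$ and the second is $I$, while $\bar V = BI$ is first-$I$-then-$B$. Your arguments have these swapped. This is not merely cosmetic, because the claims are asymmetric: (e) requires the output cone to be $\Cc_1$, which works precisely because the \emph{last} step is $I$ (where $|f'(\tilde x_2)| \gtrsim \sqrt{cL}$ forces $|\tan\theta_2| \sim |f'(\tilde x_2)-s_1|^{-1}$ to be small). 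Under your reading the last step is $B$, where $|f'|$ can be as small as $L^{-1+\b}$ and the output slope can be of order $L^{1-\b}$ --- so your sentence ``the cone inclusion $\Cc_n \to \Cc_1$ follows by composing the slope maps as above'' does not follow from your own composition. Likewise your argument for (d) starts with the wrong input cone and ends with the wrong letter.

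A second, related slip appears in your treatment of (c): you assert that after the first $B$ step the slope is ``at most $(\sqrt{cL})^{-1}$, still inside $\Cc_1$''. But in $B$ the bound $|f'(\tilde x_1)| \le \|\psi''\|\sqrt{cL}$ is an \emph{upper} bound; the only lower bound available is $|f'(\tilde x_1)| \ge L^{-1+\b}$ from (a)(i), so the intermediate slope $1/(f'(\tilde x_1) - s_0)$ can be as large as $L^{1-\b}$, certainly not in $\Cc_1$. Your step-by-step composition can be salvaged with a careful case split on the size of $|f'(\tilde x_1)|$ (or $|f'(\tilde x_2)|$), but the cleaner route --- and the one the paper takes --- is to write down the closed two-step formulae
\[
\tan\theta_2 = \frac{f'(\tilde x_1) - \tan\theta_0}{f'(\tilde x_1)f'(\tilde x_2) - f'(\tilde x_2)\tan\theta_0 - 1}\,, \qquad
\|d\tilde F^2 u_{\theta_0}\| \ge \big|(f'(\tilde x_1)f'(\tilde x_2) - 1)\cos\theta_0\big| - \big|f'(\tilde x_2)\sin\theta_0\big|\,,
\]
and estimate directly, with $|f'(\tilde x_1)f'(\tilde x_2)| \ge L^{\b/2}$ controlling the dominant term in each. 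This absorbs the near-cancellation $f'(\tilde x_2) \approx s_1$ automatically and avoids tracking the intermediate slope; the only case splits needed are (d.i)/(d.ii) on whether $|f'(\tilde x_2)| \gtrless L^{\b/4}$ and (e.i)/(e.ii) on whether $|f'(\tilde x_1)| \gtrless L^{\b/4}$.
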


This leaves us with the following most problematic case:
\begin{lem}\label{lem:special}
\ 
\begin{itemize}
\item[(f)] For $\bar V = BIB$: \quad
$d\tilde F^3 (\Cc_n) \subset \Cc_w$ \quad and \quad
$\min \|d\tilde F^3 u\| \geq L^{\b/5}$\ .
\end{itemize}
\end{lem}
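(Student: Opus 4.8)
\textbf{Proof plan for Lemma \ref{lem:special} (the case $\bar V = BIB$).}

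The plan is to track a single tangent vector through the three maps $dF_{(\tilde x_1,\tilde y_1)}, dF_{(\tilde x_2,\tilde y_2)}, dF_{(\tilde x_3,\tilde y_3)}$, keeping careful watch on both its slope and its norm growth, and then assemble these into the claimed cone inclusion and norm bound. Recall that $dF_{(x,y)}$ acts on slopes by $k \mapsto (f'(x)-k)^{-1}$ (equivalently $\cot\theta' = f'(x) - \cot\theta$) and that if $v$ has slope $k$ and norm $1$ then $\|dF_{(x,y)}v\|$ is comparable to $|f'(x) - k|$ when $|f'(x)-k|$ is not small, and is $\gtrsim 1$ in general since the bottom row of $dF_{(x,y)}$ is $(1,0)$. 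The three positions here are: $\tilde y_1 \in \mathcal N_{\sqrt{c/L}}(C'_\psi)$ (first $B$), $\tilde y_2 \in \mathcal N_c(C'_\psi)\setminus \mathcal N_{\sqrt{c/L}}(C'_\psi)$ (middle $I$), and $\tilde y_3 \in \mathcal N_{\sqrt{c/L}}(C'_\psi)$ (second $B$); crucially, by condition (a)(ii) in the definition of $G_N$ applied to the adjacent pairs, the products $d(\tilde y_1,C'_\psi)\cdot d(\tilde y_2,C'_\psi)$ and $d(\tilde y_2,C'_\psi)\cdot d(\tilde y_3,C'_\psi)$ are each $\geq K_1^2 L^{-2+\b/2}$, which forces $d(\tilde y_2,C'_\psi)$ to be relatively large — at least $\sim L^{-1+\b/2}$ — and hence $|f'(\tilde x_2)| \geq L^{\b/2}$. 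This is the saving feature: the middle $I$-step provides genuine expansion of size $L^{\b/2}$ that must absorb the two potentially-bad $B$-steps on either side.

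The key steps, in order, are as follows. First, start with $u \in \Cc_n$, so $|k_0| \le L^{-1+\b/4}$. Step through the first $B$: since $|f'(\tilde x_1)|$ may be as small as $L^{-1+\b}$ (from (a)(i)), the new slope $k_1 = (f'(\tilde x_1) - k_0)^{-1}$ can be as large as $\sim L^{1-\b}$ in absolute value, so after one step the vector lies in $\Cc_w = \Cc(L^{1-\b/4})$; this is exactly Lemma \ref{lem:oneLetter}(b), and the norm is merely $\gtrsim \tfrac12$. Second, step through $I$: now the incoming slope $k_1$ satisfies $|k_1| \lesssim L^{1-\b/4}$ while $|f'(\tilde x_2)| \geq L^{\b/2}$, but one must check these do not cancel; I would use condition (a)(ii) to show $|f'(\tilde x_2) - k_1|$ cannot be small — in fact I expect $|f'(\tilde x_2)-k_1| \gtrsim L^{-\b/4}$ or so, giving $\|d\tilde F(\cdot)\| \gtrsim L^{-\b/4}$ accumulated so far, and the new slope $k_2 = (f'(\tilde x_2)-k_1)^{-1}$ now has $|k_2| \lesssim L^{\b/4} \le 1$, i.e. the vector is back in $\Cc_1$ (this parallels Lemma \ref{lem:oneLetter}(a) but with the weaker lower bound on $|f'(\tilde x_2)|$ coming from $I$ being near the wide end). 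Third, step through the final $B$: with $|k_2| \le 1$ and $|f'(\tilde x_3)|$ possibly as small as $L^{-1+\b}$, the output slope has $|k_3| = |f'(\tilde x_3)-k_2|^{-1}$ which could be as large as $\sim L^{1-\b}$, so $d\tilde F^3 u \in \Cc_w$, as claimed. Fourth, multiply the three norm contributions and verify the product is $\geq L^{\b/5}$: the dominant gain is the $L^{\b/2}$ (or the honest $|f'(\tilde x_2)-k_1|$, which I need to show is $\gtrsim L^{\b/2 - o(1)}$ once the cancellation is controlled), against which the two $B$-steps cost at worst a bounded factor each; the arithmetic $\b/2 - (\text{small losses}) \geq \b/5$ should hold for $L$ large.

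The main obstacle — and the step deserving the most care — is the \emph{non-cancellation estimate at the $I$-step}: showing that $|f'(\tilde x_2) - k_1|$ is bounded below by a quantity of order $L^{\b/2 - \eta}$ (small $\eta$) rather than being nearly zero. This is where condition (a)(ii) must be used in its sharp form on \emph{both} overlapping pairs $(B,I)$ and $(I,B)$: it simultaneously forces $d(\tilde y_2,C'_\psi)$ to be large (so $|f'(\tilde x_2)|$ is large) and controls $|k_1|$ via $d(\tilde y_1,C'_\psi)$ being not-too-small relative to $d(\tilde y_2,C'_\psi)$, so that $|k_1| \ll |f'(\tilde x_2)|$ and no cancellation occurs. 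A secondary subtlety is that the incoming cone for the whole word is $\Cc_n$ (not $\Cc_1$), which is why the first letter must be handled by the $B$-estimate \ref{lem:oneLetter}(b) and not \ref{lem:oneLetter}(a); I would make sure the slope bookkeeping $\Cc_n \to \Cc_w \to \Cc_1 \to \Cc_w$ is internally consistent with the cone definitions ${\mathcal C}_n = {\mathcal C}(L^{-1+\b/4})$, ${\mathcal C}_1 = {\mathcal C}(1)$, ${\mathcal C}_w = {\mathcal C}(L^{1-\b/4})$. Once the $I$-step lower bound is in hand, the rest is the routine composition of the three elementary slope-and-norm estimates already established in Lemmas \ref{lem:oneLetter} and \ref{lem:twoLetter}.
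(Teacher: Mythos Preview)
Your plan has the right architecture, but there are two genuine gaps, and the paper's argument is organized differently.

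\textbf{The norm estimate.} You assert that the $I$-step contributes a growth factor $|f'(\tilde x_2)-k_1|$ while the two $B$-steps each cost only a bounded factor. This is not correct: the norm of $dF_{(\tilde x_2,\tilde y_2)}$ acting on the \emph{unit} vector $u_{\theta_1}$ is $|\cos\theta_1|\sqrt{(f'(\tilde x_2)-\tan\theta_1)^2+1}$, and after the first $B$-step the angle $\theta_1$ can be nearly vertical (indeed $|\tan\theta_1|\sim 1/|f'(\tilde x_1)|$ may be as large as $L^{1-\b}$), so $|\cos\theta_1|$ may be tiny. What actually saves you is the \emph{product}: $|\cos\theta_1|\cdot|f'(\tilde x_2)-\tan\theta_1| \approx |f'(\tilde x_1)f'(\tilde x_2)| \ge L^{\b/2}$ by (a)(ii). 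The paper does not attempt to separate the three factors; it simply invokes Lemma~\ref{lem:twoLetter}(e) for the first two letters $IB$, which already gives $d\tilde F^2(\Cc_n)\subset\Cc_1$ with growth $\ge L^{\b/3}$ via the two-step formula, and then handles the last $B$ by splitting on whether $|f'(\tilde x_3)|\gtrless L^{\b/12}$ (using $\|(dF)^{-1}\|\le 2L^{\b/12}$ in the small case).

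\textbf{The cone inclusion, step 3.} From $|k_2|\le 1$ and $|f'(\tilde x_3)|\ge L^{-1+\b}$ alone you cannot conclude $|k_3|\lesssim L^{1-\b}$: nothing prevents $|k_2|$ from lying between $|f'(\tilde x_3)|$ and $1$, making $|f'(\tilde x_3)-k_2|$ arbitrarily small. You need (a)(ii) again, now for the pair $(\tilde x_2,\tilde x_3)$: since $|k_2|\le 2/|f'(\tilde x_2)| \le 2|f'(\tilde x_3)|/L^{\b/2}\ll |f'(\tilde x_3)|$, the cancellation is ruled out. So (a)(ii) must be invoked at \emph{both} transitions, not just the middle one. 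The paper avoids this step-by-step bookkeeping altogether: for the cone relation it splits into the cases $|f'(\tilde x_3)|\ge |f'(\tilde x_1)|$ and $|f'(\tilde x_3)|<|f'(\tilde x_1)|$, and in each case combines one two-step formula (for $\tan\theta_2$ or $\tan\theta_3$) with one single step, so that the larger of $|f'(\tilde x_1)|,|f'(\tilde x_3)|$ is always used to control the final slope.

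\textbf{A minor point.} Your derivation of $|f'(\tilde x_2)|\ge L^{\b/2}$ from (a)(ii) and $\tilde x_1\in B$ is miscomputed (it only yields $|f'(\tilde x_2)|\gtrsim L^{-1/2+\b/2}$); the bound you actually want, $|f'(\tilde x_2)|\gtrsim \sqrt{L}$, comes directly from the definition of $I$. Also, throughout you write $\tilde y_i$ where $\tilde x_i$ is meant.
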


\smallskip
\begin{proof}[Proof of Proposition \ref{lem:allWords} assuming Lemmas \ref{lem:oneLetter}--\ref{lem:special}]
We go over the following checklist:
%
%
\begin{itemize}
\item $\bar V = B$ or $BB$ was covered by (b) and (c); total growth on $\Cc_n$ is $\geq \frac12$.
\end{itemize}
For $k \ge 1$, 
\begin{itemize}
\item $\bar V = I^k$ follows from (a); total growth on $\Cc_n$ is $\geq L^{k /4} \gg L^{\frac{\b}{5} k}$.
\item $\bar V = I^k B = I^{k-1}(IB)$ follows from concatenating (e) and (a); total growth on $\Cc_n$ is 
$\geq L^{(k-1) / 4} \cdot L^{\b/3} \gg L^{\frac{\b}{5} k}$.
\item $\bar V = B I^k = (BI)I^{k-1}$ follows from concatenating (a) and (d); total growth on $\Cc_n$ is 
$\geq L^{\b/5} \cdot L^{(k-1)/4} \gg L^{\frac{\b}{5} k}$.
\end{itemize}
Lastly, 
\begin{itemize}
\item $\bar V = BIB$ follows from (f); total growth on $\Cc_n$ is $ \geq L^{\b/5}$,
and
\item for $k \ge 2$, $\bar V = BI^kB = (BI) I^{k-2} (IB)$ follows by concatenating (e), followed
by (a) then
 (d); total growth on $\Cc_n$ is $\geq L^{\b/5} \cdot L^{(k-2) /4} \cdot L^{\b/3} \gg L^{\frac{\b}{5} k}$.
\end{itemize}
This completes the proof.
\end{proof}

Lemma \ref{lem:oneLetter} is easy and left to the reader; it is a straightforward application of the formulae
\[
\tan \theta_{ 1} = \frac{1}{f'(\tilde x_1) - \tan \theta_0} \, , \quad \| d \tilde F u_\theta \| = \sqrt{(f'(\tilde x_1) \cos \theta_0 - \sin \theta_0)^2 + \cos^2 \theta_0} \, ,
\]
where $\theta_1 \in [0, \pi)$ denotes the angle of the image vector $d \tilde F u_{\theta_0}$.

Below we let $K$ be such that $|f'| \le KL$.

\begin{proof}[Proof of Lemma \ref{lem:twoLetter}]
We write $u = u_{\theta_0}$ and $\theta_1, \theta_2 \in [0,2 \pi)$ for the angles of the images $d\tilde F u, d\tilde F^2u$ respectively. Throughout, we use the following `two step' formulae:
\begin{gather}
\label{eq:twoStepAngle} \tan \theta_2 = \frac{f'(\tilde x_1) - \tan \theta_0}{f'(\tilde x_1) f'(\tilde x_{2}) - f'(\tilde x_{2}) \tan \theta_0 - 1} \, , \\
\label{eq:twoStepGrowth} \| d\tilde F^2 u_\theta \|  \geq |( f'(\tilde x_1)  f'(\tilde x_2) - 1) \cos \theta_0| - | f'(\tilde x_2) \sin \theta_0| 
\end{gather}
The estimate $|f'(\tilde x_1) f'(\tilde x_2)| \geq L^{\b / 2}$ (condition (a)(ii) in the definition
of $G_N$) will be used repeatedly throughout.

\medskip

We first handle the vector growth estimates. For (c) and (e), as $u = u_{\theta_0} \in \Cc_n$, the right side of (\ref{eq:twoStepGrowth}) is
$\geq \frac12 L^{\b/2} - 2 K L^{\b/4} \gg L^{\b/3}$.
%
%
For (d) we break into the cases 

(d.i) $|f'(\tilde x_2)| \geq L^{\b/4}$ and 

(d.ii) $|f'(\tilde x_2)| < L^{\b/4}$. 

\noindent In case (d.i), by (a) we have that $u_{\theta_1} \in \Cc_1$ and $\|d F_{(\tilde x_1, \tilde y_1)} u_{\theta_0}\| \geq \frac12 K_1 \sqrt c \sqrt L$. Thus $|\tan \theta_2| \leq 2 L^{- \b/4} \ll 1$ and $\| dF_{(\tilde x_2, \tilde y_2)} u_{\theta_1}\| \geq \frac12 L^{\b/4} \gg 1$, completing the proof. In case (d.ii), the right side of (\ref{eq:twoStepGrowth}) is
\[
\geq \frac{1}{\sqrt 2} (L^{\b/2} - 1) - \frac{1}{\sqrt 2} L^{\b/4} \gg L^{\b/3} \, .
\]

We now check the cones relations for (c) -- (e). For (c), 
\[
|\tan \theta_2| \leq \frac{|f'(\tilde x_{1})| + |\tan \theta_0|}{|f'(\tilde x_1) f'(\tilde x_2) | - |f'(\tilde x_{2}) \tan \theta_0| - 1} 
\leq \frac{K L + L^{-1 +\b/4 }}{L^{\b/2} - K L^{ \b/4} - 1} \leq 2 K L^{1 - \b/2} \ll L^{1 - \b/4} \, , 
\]
so that $u_{\theta_2} \in \Cc_w$ as advertised. The case (d.i) has already been
treated. For (d.ii), the same bound as in (c) gives
\[
|\tan \theta_2| \leq  \frac{K L + 1 }{L^{\b/2} - L^{\b/4} - 1} \leq  2 K L^{1 - \b/2} \ll L^{1 - \b/4} \, ,
\]
hence $u_{\theta_2} \in \Cc_w$.

For (e) we again distinguish the cases 

(e.i) $|f'(\tilde x_1)| \geq L^{\b / 4}$ and 

(e.ii) $|f'(\tilde x_1)| < L^{\b / 4}$. 

\noindent In case (e.i), one easily checks that $dF_{(\tilde x_1, \tilde y_1)} (\Cc_n) \subset \Cc_1$ and then $dF_{(\tilde x_2, \tilde y_2)} (\Cc_1) \subset \Cc_1$ by (a). In case (e.ii) we compute directly that
\[
|\tan \theta_2| \leq \frac{|f'(x_{1})| + |\tan \theta_0|}{|f'(x_1) f'(x_2) | - |f'(x_{ 2}) \tan \theta_0| - 1} \leq \frac{L^{\b/4} + L^{-1 + \b/4}}{L^{\b/2} - K L^{\b/4} - 1} \ll 1 \, ,
\]
hence $u_{\theta_2} \in \Cc_1$.
\end{proof}

\smallskip
\begin{proof}[Proof of Lemma \ref{lem:special}.]
We let $u = u_{\theta_0} \in \Cc_n$ (i.e. $|\tan \theta_0| \leq L^{-1 + \b/4}$) and write $\theta_1, \theta_2, \theta_3 \in [0,\pi)$ for the angles associated to the subsequent images of $u$. We break into two cases: 

(I) $|f'(\tilde x_3)| \geq |f'(\tilde x_1)|$ and 

(II) $|f'(\tilde x_3)| < |f'(\tilde x_1)|$.

\smallskip
In case (I), we compute
\[
|\tan \theta_2 |\leq \frac{|f'(\tilde x_1)| + |\tan \theta_0|}{|f'(\tilde x_1) f'(\tilde x_2)| - |f'(\tilde x_2) \tan \theta_0| - 1} \leq \frac{2 |f'(\tilde x_1)|}{L^{\b/2} - 2 K L^{ \b/4} - 1} \leq 4 |f'(\tilde x_1)| L^{- \b/2} \, ,
\]
having used that $|f'(\tilde x_1)| \geq L^{-1 + \b}$ and $|\tan \theta_0| \leq L^{-1 + \b/4}$ in the second inequality. Now,
\[
|\tan \theta_3| \leq \frac{1}{|f'(\tilde x_3)| - |\tan \theta_2|} \leq \frac{1}{|f'(\tilde x_1)| - 4 |f'(\tilde x_1)| L^{- \b/2}} \leq \frac{2}{|f'(\tilde x_1)|} \leq 2 L^{1 - \b} \ll L^{1 - \b/4} \, .
\]
In case (II), we use
\[
|\tan \theta_1| \leq \frac{1}{|f'(\tilde x_1)| - |\tan \theta_0|} \leq \frac{1}{|f'(\tilde x_3)| - L^{-1 + \b/4}} \leq \frac{2}{|f'(\tilde x_3)|} \, ,
\]
again using that $|f'(\tilde x_3)| \geq L^{-1 + \b}$, and then
\begin{eqnarray*}
|\tan \theta_3| & \leq & \frac{|f'(\tilde x_2)| + |\tan \theta_1|}{|f'(\tilde x_2) f'(\tilde x_3)| - |f'(\tilde x_3) \tan \theta_1| - 1} \\
& \leq &
\frac{K L + 2 |f'(\tilde x_3)|^{-1}}{L^{\b/2} - 3} \leq \frac{K L + 2 L^{1 - \b}}{L^{\b/2} - 3} \leq 2 K L^{1 - \b/2} \ll L^{1 - \b/4} \, .
\end{eqnarray*}

For vector growth, observe that from (e) we have
$d\tilde F^2(\Cc_n) \subset \Cc_1$ and $\min \|d\tilde F^2 u\| \geq L^{\b/3}$.
So, if $|f'(\tilde x_3)| \geq L^{\b/12}$ then 
\[
\|dF_{(\tilde x_3, \tilde y_3)} u_{\theta_2}\| \geq \frac{1}{\sqrt{2}} (L^{\b/12} -1 ) \gg 1 \, .
\] 
Conversely, if $|f'(\tilde x_3)| < L^{\b/12}$ then we can use the crude estimate $\| (dF_{(\tilde x, \tilde y)})^{-1} \| \leq \sqrt{|f'(\tilde x)|^2 + 1}$ applied to $(\tilde x, \tilde y) = (\tilde x_3, \tilde y_3)$, yielding
\[
\| (dF_{(\tilde x_3, \tilde y_3)})^{-1}\| \leq \sqrt{L^{2\b/12} + 1} \leq 2 L^{\b/12} \, ,
\]
hence $\| d\tilde F^3 u\| \geq \frac12 L^{\b/3 - \b/12} = \frac12 L^{\b/4} \gg L^{\b/5}$, completing the proof.
\end{proof}

\section{The standard map}

Let $\psi$ and  $f_0 = f_{\psi, L, a} = L \psi + a$ be as defined in Sect. 2.1.

\begin{lem}
There exists $\varepsilon > 0$ and $K_0 > 1$, depending only on $\psi$, for which the following holds: for all $L > 0$ and $f \in \mathcal U_{\varepsilon, L}(f_0)$,
\begin{itemize}
\item[(a)] $\max\{\|f'\|_{C^0}, \|f''\|_{C^0},  \|f'''\|_{C^0}\} \leq K_0 L$,
\item[(b)] The cardinalities of $C'_f$ and $C''_f$ are equal to those of $f_0$ (equivalently
those of $\psi$), 
\item[(c)]  $\min_{\hat x \in C_f'} |f''(\hat x)| \, , \  \min_{\hat z \in C_f''} |f'''(\hat z)| \geq K_0^{-1} L$\ , and 
\item[(d)] 
$\min_{\hat x, \hat x' \in C'_f} d(\hat x,\hat x' )\, , \ \min_{\hat z, \hat z' \in C''_f} 
d(\hat z, \hat z') \geq K_0^{-1}$
\end{itemize}
\end{lem}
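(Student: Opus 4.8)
The key observation is that $f \in \mathcal U_{\varepsilon,L}(f_0)$ gives, for $j=1,2,3$, the pointwise bounds $\|f^{(j)} - L\psi^{(j)}\|_{C^0} < L\varepsilon$, so that on the scale set by $L$ the map $f$ is a perturbation of $L\psi$ that is small \emph{uniformly in $L$}. Part (a) is then immediate from the triangle inequality: $\|f^{(j)}\|_{C^0} \le L(\|\psi^{(j)}\|_{C^0}+\varepsilon)$ for $j=1,2,3$, so any $K_0 \ge \max_{j}\|\psi^{(j)}\|_{C^0}+1$ works once $\varepsilon \le 1$. For the remaining parts I would fix once and for all the $\psi$-dependent constants $\eta := \min\{\min_{\hat x\in C'_\psi}|\psi''(\hat x)|,\ \min_{\hat z\in C''_\psi}|\psi'''(\hat z)|\} > 0$, positive by (H2); $d_0 := \min\{\,d(\hat x,\hat x') : \hat x\ne \hat x' \in C'_\psi\}$ together with the analogous minimum over $C''_\psi$, positive by (H1); and then a small $\delta>0$ (depending only on $\psi$) chosen so that on $\mathcal N_\delta(C'_\psi)$ one has $|\psi''|\ge \eta/2$ with locally constant sign, on $\mathcal N_\delta(C''_\psi)$ one has $|\psi'''|\ge\eta/2$ with locally constant sign, $\delta < d_0/3$, and $|\psi'|\ge c$ on $\mathbb S^1\setminus\mathcal N_\delta(C'_\psi)$ (resp.\ $|\psi''|\ge c$ on $\mathbb S^1\setminus\mathcal N_\delta(C''_\psi)$) for some $c>0$ by compactness. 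Finally set $\varepsilon := \tfrac12\min\{c,\ \eta/4,\ (\eta/4)\delta,\ 1\}$.

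With these choices, (b), (c), (d) reduce to a one-variable monotonicity count applied to $g=f'$ (for $C'_f$) and to $g=f''$ (for $C''_f$). Taking $g=f'$: outside $\mathcal N_\delta(C'_\psi)$ one has $|f'|\ge L(|\psi'|-\varepsilon)\ge L(c-\varepsilon)>0$, so $f'$ has no zeros there; on each of the $M_1=\#C'_\psi$ connected components $B=(\hat x_i-\delta,\hat x_i+\delta)$ of $\mathcal N_\delta(C'_\psi)$ one has $|f''|\ge L(\eta/2-\varepsilon)\ge L\eta/4>0$, so $f'$ is strictly monotone on $B$, while a Taylor estimate gives $|\psi'(\hat x_i\pm\delta)|\ge(\eta/2)\delta$ with opposite signs at the two endpoints, so (since $L\varepsilon < L(\eta/4)\delta$) $f'$ changes sign across $B$. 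Hence $f'$ has exactly one zero in each component and none elsewhere, giving $\#C'_f = M_1$ (part (b)); at each such zero $\hat x$, $|f''(\hat x)|\ge L\eta/4$ (part (c)); and distinct zeros of $f'$ lie in distinct components, hence are at distance $\ge d_0-2\delta\ge d_0/3$ apart (part (d)). Running the identical argument with $(f'',f''')$ in place of $(f',f'')$ and the second clauses in the definitions of $\eta,d_0,\delta$ handles $C''_f$. Taking $K_0 := \max\{\max_j\|\psi^{(j)}\|_{C^0}+1,\ 4/\eta,\ 3/d_0,\ 2\}$ covers all the inequalities above and makes $K_0>1$; both $K_0$ and $\varepsilon$ depend only on $\psi$.

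\textbf{Main point.} There is no genuine obstacle; the only thing requiring care is that counting the zeros of $f'$ needs $C^1$-control of $f'$ — that is, $C^0$-closeness of $f''$ to $L\psi''$ \emph{plus} the nondegeneracy $|\psi''|>0$ on $C'_\psi$ supplied by (H2) — and not mere $C^0$-closeness of $f'$ itself. This is precisely why $\mathcal U_{\varepsilon,L}$ is defined using the $C^3$ norm and why (H2) enters; and the fact that the perturbation radius scales as $L\varepsilon$ is what keeps $\varepsilon$ and $K_0$ independent of $L$.
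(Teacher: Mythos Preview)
Your proof is correct and is precisely the routine nondegeneracy/monotonicity argument the paper has in mind; the paper itself writes only ``The proof is straightforward and is left to the reader,'' and what you have supplied is exactly that omitted verification.
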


The proof is straightforward and is left to the reader.

\begin{proof}[Proof of Theorem \ref{thm:lyap3}]
We claim -- and leave it to the reader to check -- that the proofs in Sections 3--5 (with $C_f', C_f''$ replacing $C_\psi', C_\psi''$)
use only the form of the maps $F= F_{f}$ as defined in Sect. 2.1,
and the four properties above. Thus they prove Theorem \ref{thm:lyap3} as well.
\end{proof}

\begin{proof}[Proof of Corollary \ref{cor:standard}]
Under the (linear) coordinate change $x = \frac{1}{2 \pi} \theta, y = \frac{1}{2 \pi}(\theta - I)$, the standard map conjugates to the map
\[
(x,y) \mapsto (L \sin (2 \pi x) + 2 x - y, x) 
\]
defined on $\T^2$, with both coordinates taken modulo $1$. This map is of the form $F_{f}$, with $f(x) = f_0(x) +2  x$ and $f_0 (x) : =L \sin (2 \pi x)$; here $a = 0$ and $\psi(x) = \sin(2 \pi x)$. 
Let $\varepsilon>0$ be given by Theorem \ref{thm:lyap3} for this choice of $\psi$.
Then $f$ clearly belongs in $\mathcal U_{\varepsilon, L}(f_0)$ for large
enough $L$. 
\end{proof}

\bibliography{ref}

\begin{thebibliography}{10}

\bibitem{avila2002formula}
{\sc A.~Avila and J.~Bochi}, {\em A formula with some applications to the
  theory of {L}yapunov exponents}, Israel Journal of Mathematics, 131 (2002),
  pp.~125--137.

\bibitem{avila2009ten}
{\sc A.~Avila and S.~Jitomirskaya}, {\em The ten martini problem}, Annals of
  Mathematics,  (2009), pp.~303--342.

\bibitem{avila2010extremal}
{\sc A.~Avila and M.~Viana}, {\em Extremal {L}yapunov exponents: an invariance
  principle and applications}, Inventiones mathematicae, 181 (2010),
  pp.~115--178.

\bibitem{benedicks1985quadratic}
{\sc M.~Benedicks and L.~Carleson}, {\em On iterations of $1-ax^2$ on $(-1,
  1)$}, Annals of Mathematics,  (1985), pp.~1--25.

\bibitem{benedicks1991Henon}
\leavevmode\vrule height 2pt depth -1.6pt width 23pt, {\em The dynamics of the
  {H}\'enon map}, Annals of Mathematics,  (1991), pp.~73--169.

\bibitem{berger2012non}
{\sc P.~Berger and P.~D. Carrasco}, {\em Nonuniformly hyperbolic
  diffeomorphisms derived from the standard map}, Communications in
  Mathematical Physics, 329 (2014), pp.~239--262.

\bibitem{bochi2002genericity}
{\sc J.~Bochi}, {\em Genericity of zero lyapunov exponents}, Ergodic Theory and
  Dynamical Systems, 22 (2002), pp.~1667--1696.

\bibitem{bochi2005lyapunov}
{\sc J.~Bochi and M.~Viana}, {\em The {L}yapunov exponents of generic
  volume-preserving and symplectic maps}, Annals of mathematics,  (2005),
  pp.~1423--1485.

\bibitem{bocker2010continuity}
{\sc C.~Bocker-Neto and M.~Viana}, {\em Continuity of {L}yapunov exponents for
  random 2{D} matrices}, arXiv preprint arXiv:1012.0872,  (2010).

\bibitem{bonatti2003genericite}
{\sc C.~Bonatti, X.~G{\'o}mez-Mont, and M.~Viana}, {\em G{\'e}n{\'e}ricit{\'e}
  d'exposants de {L}yapunov non-nuls pour des produits d{\'e}terministes de
  matrices}, in Annales de l'IHP Analyse non lin{\'e}aire, vol.~20, 2003,
  pp.~579--624.

\bibitem{bonatti2004lyapunov}
{\sc C.~Bonatti and M.~Viana}, {\em {L}yapunov exponents with multiplicity 1
  for deterministic products of matrices}, Ergodic Theory and Dynamical
  Systems, 24 (2004), pp.~1295--1330.

\bibitem{bourgain2013lyapounov}
{\sc J.~Bourgain}, {\em On the {L}yapunov exponents of {S}chr{\"o}dinger
  operators associated with the standard map}, in Asymptotic Geometric
  Analysis, Springer, 2013, pp.~39--44.

\bibitem{spencer1985standard}
{\sc L.~Carleson and T.~Spencer}.
\newblock Personal communication.

\bibitem{duarte1994plenty}
{\sc P.~Duarte}, {\em Plenty of elliptic islands for the standard family of
  area preserving maps}, in Annales de l'IHP Analyse non lin{\'e}aire, vol.~11,
  1994, pp.~359--409.

\bibitem{duarte2008elliptic}
{\sc P.~Duarte}, {\em Elliptic isles in families of area-preserving maps},
  Ergodic Theory and Dynamical Systems, 28 (2008), pp.~1781--1813.

\bibitem{furstenberg1963noncommuting}
{\sc H.~Furstenberg}, {\em Noncommuting random products}, Transactions of the
  American Mathematical Society,  (1963), pp.~377--428.

\bibitem{gol1989lyapunov}
{\sc I.~Y. Gol'dsheid and G.~A. Margulis}, {\em {L}yapunov indices of a product
  of random matrices}, Russian mathematical surveys, 44 (1989), pp.~11--71.

\bibitem{gorodetski2012stochastic}
{\sc A.~Gorodetski}, {\em On stochastic sea of the standard map},
  Communications in Mathematical Physics, 309 (2012), pp.~155--192.

\bibitem{guivarc1986products}
{\sc Y.~Guivarc'h and A.~Raugi}, {\em Products of random matrices: convergence
  theorems}, Contemp. Math, 50 (1986), pp.~31--54.

\bibitem{hennion1984loi}
{\sc H.~Hennion}, {\em Loi des grands nombres et perturbations pour des
  produits r{\'e}ductibles de matrices al{\'e}atoires ind{\'e}pendantes},
  Probability Theory and Related Fields, 67 (1984), pp.~265--278.

\bibitem{herman1983methode}
{\sc M.~Herman}, {\em Une m\'ethode pour minorer les exposants de {L}yapunov et
  quelques exemples motrant le caractere local d'un th\'eoreme d'{A}rnold et de
  {M}oser sur le tore de dimension 2}, Commentarii Mathematici Helvetici, 58
  (1983), pp.~453--502.

\bibitem{jakobson1981absolutely}
{\sc M.~V. Jakobson}, {\em Absolutely continuous invariant measures for
  one-parameter families of one-dimensional maps}, Communications in
  Mathematical Physics, 81 (1981), pp.~39--88.

\bibitem{kifer1982perturbations}
{\sc Y.~Kifer}, {\em Perturbations of random matrix products}, Zeitschrift
  f{\"u}r Wahrscheinlichkeitstheorie und Verwandte Gebiete, 61 (1982),
  pp.~83--95.

\bibitem{knill1992positive}
{\sc O.~Knill}, {\em Positive {L}yapunov exponents for a dense set of bounded
  measurable {SL}(2,{R})-cocycles}, Ergodic Theory and Dynamical Systems, 12
  (1992), pp.~319--331.

\bibitem{kotani1984}
{\sc S.~Kotani}, {\em {L}japunov indices determine absolutely continuous
  spectra of stationary random one-dimensional {S}chr\"odinger operators},
  North-Holland Math. Library, 32 (1984).

\bibitem{ledrappier2003random}
{\sc F.~Ledrappier, M.~Shub, C.~Sim{\'o}, and A.~Wilkinson}, {\em Random versus
  deterministic exponents in a rich family of diffeomorphisms}, Journal of
  statistical physics, 113 (2003), pp.~85--149.

\bibitem{ledrappier1991stability}
{\sc F.~Ledrappier and L.-S. Young}, {\em Stability of {L}yapunov exponents},
  Ergodic Theory and Dynamical Systems, 11 (1991), pp.~469--484.

\bibitem{lian2012positive}
{\sc Z.~Lian and M.~Stenlund}, {\em Positive {L}yapunov exponent by a random
  perturbation}, Dynamical Systems, 27 (2012), pp.~239--252.

\bibitem{puig2004cantor}
{\sc J.~Puig}, {\em Cantor spectrum for the almost mathieu operator},
  Communications in mathematical physics, 244 (2004), pp.~297--309.

\bibitem{ruelle1979analyticity}
{\sc D.~Ruelle}, {\em Analyticity properties of the characteristic exponents of
  random matrix products}, Advances in mathematics, 32 (1979), pp.~68--80.

\bibitem{rychlik1988another}
{\sc M.~R. Rychlik}, {\em Another proof of {J}akobson's theorem and related
  results}, Ergodic Theory and Dynamical Systems, 8 (1988), pp.~93--109.

\bibitem{shamis2015bounds}
{\sc M.~Shamis and T.~Spencer}, {\em Bounds on the {L}yapunov exponent via
  crude estimates on the density of states}, Communications in Mathematical
  Physics, 338 (2015), pp.~705--720.

\bibitem{spencer1985conjectures}
{\sc T.~Spencer}.
\newblock Einstein Chair Lecture at CUNY, videotape \# 329.

\bibitem{virtser1980products}
{\sc A.~Virtser}, {\em On products of random matrices and operators}, Theory of
  Probability \& Its Applications, 24 (1980), pp.~367--377.

\bibitem{wang2001strange}
{\sc Q.~Wang and L.-S. Young}, {\em Strange attractors with one direction of
  instability}, Communications in Mathematical Physics, 218 (2001), pp.~1--97.

\bibitem{wang2006nonuniformly}
\leavevmode\vrule height 2pt depth -1.6pt width 23pt, {\em Nonuniformly
  expanding 1{D} maps}, Communications in mathematical physics, 264 (2006),
  pp.~255--282.

\bibitem{wang2008toward}
\leavevmode\vrule height 2pt depth -1.6pt width 23pt, {\em Toward a theory of
  rank one attractors}, Annals of Mathematics,  (2008), pp.~349--480.

\bibitem{young1997lyapunov}
{\sc L.-S. Young}, {\em {L}yapunov exponents for some quasi-periodic cocycles},
  Ergodic Theory and Dynamical Systems, 17 (1997), pp.~483--504.

\end{thebibliography}
\bibliographystyle{siam}

\end{document}